\definecolor{darkgreen}{rgb}{0,0.5,0}
\DeclareSymbolFont{cyrletters}{OT2}{wncyr}{m}{n}
\DeclareMathSymbol{\Sha}{\mathalpha}{cyrletters}{"58}
\newcommand{\R}{\mathbb{R}}
\newcommand{\PP}{\mathbb{P}}
\newcommand{\Q}{\mathbb{Q}}
\newcommand{\C}{\mathbb{C}}
\newcommand{\calC}{\mathcal{C}}
\newcommand{\calU}{\mathcal{U}}
\newcommand{\calD}{\mathcal{D}}
\newcommand{\Z}{\mathbb{Z}}
\newcommand{\F}{\mathbb{F}}
\renewcommand{\O}{\mathcal{O}}
\newcommand{\p}{\mathfrak{p}}
\newcommand{\q}{\mathfrak{q}}
\newcommand{\hdr}{\operatorname{H^1_{dR}}}
\newcommand{\hrig}{\operatorname{H^1_{rig}}}
\newcommand{\Span}{\operatorname{Span}}
\newcommand{\hmw}{\operatorname{H^1_{MW}}}
\newcommand{\tors}{\operatorname{tors}}
\newcommand{\Div}{\operatorname{Div}}
\newcommand{\dv}{\operatorname{div}}
\newcommand{\Res}{\operatorname{Res}}
\newcommand{\Reg}{\operatorname{Reg}}
\newcommand{\Spec}{\operatorname{Spec}}
\newcommand{\supp}{\operatorname{supp}}
\newcommand{\ord}{\operatorname{ord}}
\newcommand{\rk}{\operatorname{rk}}
\newcommand{\Frob}{\operatorname{Frob}}
\renewcommand{\O}{\mathcal{O}}
\newtheorem{theorem}{Theorem}[section]
\newtheorem{proposition}[theorem]{Proposition}
\newtheorem{corollary}[theorem]{Corollary}
\newtheorem{lemma}[theorem]{Lemma}
\newtheorem{cond}{Condition}
\newtheorem{ass}{Assumption}
\newtheorem{conj}[theorem]{Conjecture}
\newtheorem{alg}[theorem]{Algorithm}
\theoremstyle{remark}
\newtheorem{remark}[theorem]{Remark}
\newtheorem{definition}[theorem]{Definition}
\newtheorem{example}[theorem]{Example}
\numberwithin{equation}{section}
\newcommand{\sg}{{}}
\newcommand{\sm}{{}}
\author[Stevan Gajovi\'c]{Stevan Gajovi\'c}
\address{Stevan Gajovi\'c, Charles University, Faculty of Mathematics and Physics, Department of Algebra, Sokolov\-sk\' a 83, 186~75 Praha~8, Czech Republic}
\email{gajovic@karlin.mff.cuni.cz}
\author[J.~Steffen M\"uller]{J.~Steffen M\"uller}
\address{J.~Steffen M\"uller, University of Groningen, Faculty of Science and Engineering, Algebra, Nijenborgh~9, 9747~AG Groningen, Netherlands}
\email{steffen.muller@rug.nl} 
\title[Computing $p$-adic heights on hyperelliptic curves]{Computing $p$-adic heights on
hyperelliptic curves}
\date{}
\begin{document}

\maketitle

\begin{abstract}
We describe an algorithm to compute the local Coleman--Gross $p$-adic height 
  at $p$ on a hyperelliptic curve. Previously, this was only possible using an algorithm
due to Balakrishnan and Besser, which was limited to odd degree. While we follow
  their general strategy, our
algorithm is significantly faster and simpler and works for both odd and even degree. 
  We discuss a precision analysis and an implementation in SageMath.
  Our work has several applications, also discussed in this article. 
  These include various versions of the 
  quadratic Chabauty method, and numerical evidence 
  for a $p$-adic version of the conjecture of Birch and Swinnerton--Dyer
  in cases where this was not previously possible.
\end{abstract}



\section{Introduction} \label{sec:introduction}

Algorithms for the computation of $p$-adic heights on Jacobian varieties 
curves have recently played a crucial
part in explicit methods for the computation of integral and
rational points on curves via the quadratic Chabauty
method. Such heights also appear in
$p$-adic analogues of the conjecture of Birch and Swinnerton--Dyer.
In this work, we restrict to the case of the Jacobian $J$
of a hyperelliptic curve $$X\colon y^2=f(x)\,,\quad f\in
\Z[x]\;\;\text{squarefree}$$
over $\Q$ and a prime $p$ of good reduction for $X$. 
Denote by
$h\colon J(\Q)\times J(\Q)\to \Q_p$ the $p$-adic height pairing constructed
by Coleman and Gross~\cite{Coleman-Gross-Heights} (depending on various
choices).
Then $h$ is bilinear, and is a sum of local height pairings $h_v(\cdot,\cdot)$
for each finite prime $v\in \Z$. These are defined on pairs of divisors on
$X\otimes \Q_v$ of degree~0 with disjoint support, and the construction 
depends on whether $v\ne p$ or $v=p$.
The former are defined in terms of intersection theory on arithmetic
surfaces and can be computed using algorithms discussed in \cite{Hol12,
Mul14, Raymond-David-Steffen}.
Our work focuses on the latter; these are given
in terms of $p$-adic analysis. 
More precisely, one has $h_p(D_1,D_2)=\int_{D_2}\omega_{D_1}$, where
$\omega_{D_1}$ is a certain differential of the third kind on $X\otimes \Q_p$ whose residue divisor is $D_1$, and  the integral is a Coleman integral
\cite{ColemanI}, \cite{Coleman-deShalit}.

When $X$ has an odd degree model over $\Q_p$, one can compute $h_p$
using work of 
Balakrishnan and Besser~\cite{BBHeights}. 
Based on their strategy, we describe  in Sections~\ref{sec:Algorithm-computations}, \ref{sec:Even-two-infinities}  and~\ref{sec:affine}
an algorithm  to compute $h_p$ that does not
require $X$ to have an odd degree model over $\Q_p$, and
is much simpler and faster than the one from~\cite{BBHeights}  (see the
timings in~\S\ref{subsec:timings}).  
We use it 
\begin{itemize}
  \item to obtain a vast speed-up of the quadratic Chabauty method for
rational points in some situations (see~\S\ref{subsec:qcrat});
\item    to develop a new
quadratic Chabauty method for integral points on even degree
hyperelliptic curves over number fields that is simpler than previous
instances of this method
(see~\S\ref{subsec:intro-QC-details} and~\cite{LinQC}) and crucially relies
on our new algorithm;
    \item to give numerical evidence for  the
$p$-adic Birch and Swinnerton--Dyer conjecture from~\cite{BMS16}
in cases where this was not previously possible
(see~\S\ref{subsec:pbsd} and Section~\ref{S:pbsd}).
\end{itemize}

We now sketch our method.
Assume that $X$ is given by an even degree model (we will see in Section \ref{sec:Algorithm-computations} that this is sufficient), and, for simplicity, that $f$ is monic. Denote by $\infty_-$ and
$\infty_+$ the two points at infinity, namely 
  $\infty_{\pm} = (1:\pm 1:0)$ on the model of $X$ given by the closure of~$y^2=f(x)$ in the weighted
  projective plane $\PP_{1,g+1,1}$.
Since the local heights are bi-additive, it suffices to compute $h_p(P-Q,R-S)$
for distinct points $P,Q,R,S$.
As in~\cite{BBHeights} we assume,  for simplicity and for implementation
reasons (see Remark~\ref{R:extensions})
that these points are all defined over $\Q_p$.

Our method crucially depends on the existence of a nontrivial
degree-0 divisor supported at infinity, namely $D_\infty\colonequals \infty_- - \infty_+$. 
We show in Sections~\ref{sec:Even-two-infinities} and~\ref{sec:affine} that the computation of $h_p(P-Q,R-S)$ can be reduced essentially to the 
computation of Coleman integrals of the differentials 
$$\omega_i\colonequals \frac{x^idx}{y},\quad i=0,\ldots,2g\,. $$
For $P-Q=D_\infty$, this reduction is straightforward, since the residue
divisor of $\omega_g$ is $D_\infty$ (see
Section~\ref{sec:Even-two-infinities}).
Using a change of variables and further reductions to divisors of a specific shape,
we also reduce the computation of the relevant integrals $\int
\omega_{P-Q}$ for most other divisors $P-Q$ to the computation of $\int
\omega_g$ (see Section~\ref{sec:affine}).
Coleman integrals of the differentials $\omega_i$ 
can be computed easily using Balakrishnan's algorithm
\cite{Jen-Even-Degree-CI}, which is implemented in {\tt SageMath}~\cite{Sage}.

No nontrivial degree~0 divisor supported at $\infty$ exists when $f$ has odd degree.
Instead, the authors of~\cite{BBHeights}  compute Coleman
integrals of differentials of the third kind  using a clever but 
more complicated 
approach that involves high precision computations in 
local coordinates of points over high degree extensions of
$\Q_p$.
Our algorithm in fact extends to odd degree, by transforming to an even degree model. 
We give some timings in~\S\ref{subsec:implementation}; these show that our algorithm
is indeed faster than the one from~\cite{BBHeights}.  

A {\tt SageMath}-implementation of our algorithm is discussed
in~\cref{subsec:implementation} and can be obtained 
from~{\url{https://github.com/StevanGajovic/heights_above_p}}. 
We give a precision analysis in Section~\ref{sec:prec-p-adic-heights}.
For our algorithm, we assume that the points $P,Q,R,S$ satisfy a mild condition
(see \sg{Condition 1 in Definition~\ref{def:conds}}, and see~\S\ref{subsec:weakening} on how this 
requirement may be weakened).

\subsection{Application~I: Quadratic Chabauty for rational points}\label{subsec:qcrat}
  The quadratic Cha\-bauty method for rational points was introduced by
  Balakrishnan and Dogra in~\cite{Jen-Netan-QCRP1}. In its simplest form,
  it  can be used to
  compute the rational points on certain curves $X/\Q$ of genus $g>1$
  having Mordell--Weil rank
  $r\colonequals \rk J(\Q) =g$ and Picard number $\rk\mathrm{NS}(J)>1$, 
  where $J$ is the Jacobian variety of $X$. 
The idea is to fix a prime $p$ of good reduction such that $J(\Q)$ has finite index in $J(\Q_p)$
and to use quadratic relations in the image
of the abelian logarithm $\log\colon J(\Q)\otimes \Q_p\to
\mathrm{H}^0(X_{\Q_p},\Omega^1)^\vee$ coming from $p$-adic heights to write down a locally analytic function $\rho\colon X(\Q_p)\to \Q_p$ which has only
finitely many zeroes and vanishes along $X(\Q)$. This is similar to the
method of Chabauty--Coleman, which uses linear relations and requires
$r<g$.
  The approach of Balakrishnan and Dogra fits into Kim's non-abelian
  Chabauty framework  \cite{Kim2005MFG} and \cite{Kim2009Selmer}, which
  aims to generalize Chabauty--Coleman using $p$-adic Hodge theory
  and arithmetic fundamental groups.
  Their results have  been
  generalized~\cite{Jen-Netan-QCRP2}, reinterpreted~\cite{EL19, BMS21} and 
  applied to several explicit examples (see for instance~\cite{QC13, BBBLMTV19, AABCCKW}).

To apply the quadratic Chabauty method in practice, one needs to 
solve for the $p$-adic height pairing
as a bilinear pairing. 
Two methods for this step are discussed in~\cite[\S3.3, \S3.5.4]{BDMTV2}: The first one uses Neko\-v\'{a}\v{r}'s construction and the machinery of~\cite{QC13}, but it requires that $X$ has ``enough'' rational points. 
The second method uses that 
the global $p$-adic height $h$ can be written a linear combination of products of
Coleman integrals of holomorphic forms as in~\cite{BBM1} (see
also~\S\ref{subsec:intro-QC-details} below). To find the coefficients, we need to evaluate these products and the height pairing $h$ in sufficiently many (pairs of) points. For this purpose we can use the techniques of the present article.

For instance, the Atkin--Lehner quotient
$$X_0^+(107)\colon y^2 = x^6 + 2x^5 + 5x^4 + 2x^3 - 2x^2 - 4x - 3$$
has precisely the six rational points $(\pm1, \pm1), \infty_{\pm}$.
In~\cite{BDMTV2}, this is proved using quadratic Chabauty for the prime
$p=61$. Code for this computation can be found
at~{\url{https://github.com/steffenmueller/QCMod}}.
The fairly large prime $p=61$ was chosen for two reasons: First,
it is expected that $\rho$ vanishes in further $p$-adic points
in addition to the rational ones. To show that these 
are not rational, one can use the Mordell--Weil
sieve~\cite{Bruin-Stoll-MW-Sieve}, but for this to have a reasonable chance of
success, the prime $p$ needs to be chosen carefully.
Second, to apply the algorithm from~\cite{BBHeights}, a prime $p$ such that  $X\otimes \Q_p$ has
an odd degree model over $\Q_p$ is required. 
The first condition is also satisfied for $p=7$, but the second one is not. We use the methods of Sections~\ref{sec:Algorithm-computations}, \ref{sec:Even-two-infinities}, and \ref{sec:affine} to compute  $X_0^+(107)(\Q)$ via a combination of the quadratic Chabauty method and the Mordell--Weil sieve as in~\cite{Jen-Netan-QCRP1, QC13, BDMTV2} for $p=7$. 
On 
a single core of a 4-core 2.6 GHz Intel i7-6600U CPU with 8GB RAM, the
entire computation took 47 seconds, in contrast with about~40
minutes for $p=61$ using the \texttt{Magma}~\cite{Magma} implementation of
the algorithm from \cite{BBHeights}. 

Recent work of Duque-Rosero, Hashimoto and Spelier~\cite{DRHS} makes the
geometric quadratic Chabauty method due to Edixhoven and Lido~\cite{EL19}
explicit by reinterpreting it in terms of Coleman--Gross $p$-adic heights. The work described in
the present article can also be used to apply their algorithm in practice.

\subsection{Application~II: Quadratic Chabauty for integral
points}\label{subsec:intro-QC-details}

Prior to the work of Balakrishnan--Dogra, a quadratic Chabauty method to
compute the integral points on hyperelliptic curves  $X$ of odd degree had been developed by
Balakrishnan, Besser and the
second author in~\cite{BBM1}.
This method also requires that $r=g$ and that the closure of
$J(\Q)$ has finite index in $J(\Q_p)$, but has no assumption on
$\rk\mathrm{NS}(J) $.
The main result of~\cite{BBM1} is the existence of a nontrivial locally
analytic function $\rho\colon \calU(\Z_p)\to \Q_p$, where
$\mathcal{U}=\Spec\Z[x,y]/(y^2-f(x))$, and a finite set $T\subset \Q_p$, both computable, such that $\rho$ takes values in $T$ on 
$\calU(\Z)$.
The construction of $\rho$ is fairly technical: it uses Besser's $p$-adic Arakelov theory, an extension of local Coleman--Gross heights to pairs of  
divisors with common support, and double Coleman integrals from tangential base points.
It was further extended in \cite{BBBM-QCNF} to $\O_K$-integral points on certain hyperelliptic curves $X$ of odd degree over number fields $K$. 
The explicit methods based on this approach developed in~\cite{BBM1, BBM2,
BBBM-QCNF} require the computation of local and global $p$-adic heights,
and hence our algorithm leads to a considerable speed-up of these methods. However, these methods are all limited to curves with a global odd degree model.

We develop in~\cite{LinQC} a quadratic Chabauty
method for integral points on even degree hyperelliptic curves over $\Q$
and more general number fields. Using the present article, this can be made explicit,
and we discuss two examples in~\cite{LinQC}, including an example over
$\Q(\sqrt{7})$ that is not amenable to any other existing technique (as far
as we can tell). The idea is to 
decompose the linear functional $\lambda(P)\colonequals
h(\infty_--\infty_+, P)$ into local terms and to write it as a sum of
single Coleman integrals.
A noteworthy feature of this new quadratic Chabauty
method is its simplicity; no double integrals, $p$-adic Arakelov theory,
tangential base points or
$p$-adic Hodge theory are required.

\subsection{Application~III: $p$-adic BSD}\label{subsec:pbsd}
Another
application of our algorithm is the numerical verification of Conjecture~\ref{pbsd}, a $p$-adic Birch and Swinnerton--Dyer
conjecture for modular abelian varieties of $\mathrm{GL}_2$-type with good ordinary reduction, in examples.
So far, this has only been done for elliptic curves and for
Jacobians of hyperelliptic curves that have an odd degree model over $\Q_p$
(see~\cite{BMS16}). Our implementation makes it possible to drop this
condition.
We discuss how to numerically verify Conjecture~\ref{pbsd} and give
a worked example (for $J_0^+(67)$ and $p=11$) in Section~\ref{S:pbsd}.

\subsection*{Acknowledgements}
We thank 
Jennifer Balakrishnan, Alex Best, Francesca Bianchi, Martin L\"udtke and Michael Stoll for many valuable comments and suggestions, and Amnon Besser, Bas Edixhoven, Sachi Hashimoto, Enis Kaya, Kiran Kedlaya, Guido Lido, David Rohrlich, and Jaap Top for helpful discussions. 
We thank the anonymous referees for various particularly helpful remarks and suggestions for
improvement.
We acknowledge support from DFG through DFG-Grant MU 4110/1-1. 
In addition. S.M. was supported by NWO Grant VI.Vidi.192.106, and S.G. was
supported by a guest postdoc fellowship at the Max Planck Institute for
Mathematics in Bonn, by Czech Science Foundation GAČR, grant 21-00420M, and
by Charles University Research Centre program UNCE/SCI/022 during various
stages of this project. 
Part of this research was done during a visit of S.G. to Boston University,
partially supported by a Diamant PhD Travel Grant; S.G. wants to thank
Boston University for their hospitality.
Some of the research in this article forms part of S.G.'s PhD
thesis~\cite{Gajovic-thesis}, written at the University of Groningen under
S.M.'s supervision.

\section{Coleman--Gross heights} \label{sec:Coleman-Gross}
Following~\cite[Section~2]{BBHeights}, we introduce the $p$-adic height
pairing constructed by Coleman and Gross in \cite{Coleman-Gross-Heights}. Let $p$ be a prime number and let $K$ be a number
field. Let $X/K$  be a 
smooth projective geometrically integral  curve of genus $g>0$, with good reduction at all
primes above $p$, and let $J/K$ denote its Jacobian. 
For a non-archimedean place $v$ of $K$ we write $K_v$ for the completion of $K$ at
$v$, $\O_v$ for its ring of integers and $\pi_v$ for a uniformizer, and we denote
$X_v\colonequals X\otimes K_v$.

The $p$-adic height depends on the  
following data, which we fix:
\begin{enumerate}[(a)]
  \item 
A continuous idèle class character
$$\ell\colon \mathbb{A}_K ^{\ast}/K^{\ast}\longrightarrow \mathbb{Q}_p$$
such that the local characters $\ell_\p$ induced by $\ell$, for $\p\mid p$, do not vanish 
on $\O^\ast_{K_\p}$.
\item 
For each $\p\mid p$ a choice of a subspace $W_\p \subset \hdr(X_\p/K_\p)$
    complementary to the space of holomorphic forms.
\end{enumerate}

\begin{remark}\label{rmk:ramified-unramified-over-p}
  The condition in (a) means that the local character $\ell_{\p}$ is ramified,
  which we assume. We require this to define the
  local component of the height using Coleman integration, see Definition
  \ref{def:local-height-above-p}. It is also possible to define the local
  height if the local character $\ell_{\p}$ is unramified, using
  intersection theory exactly as for primes $\q\nmid p$. See also \cite[\S2.2]{BBBM-QCNF}. 
  We use this greater generality in~\cite{LinQC}.
\end{remark}

For any place $\q\nmid p$, we have $\ell_\q(\O_{K_\q}^{\ast})=0$ for continuity reasons, which implies
that $\ell_\q$ is completely determined by $\ell_\q(\pi_\q)$. On the other hand, for
$\p\mid p$, we can decompose 
\begin{equation}\label{trace-on-O_K}
\begin{tikzcd}
 \O_{K_\p}^{\ast}  \arrow{r}{\log_{\p}}  \arrow{rd}{\ell_\p} 
  & K_\p \arrow{d}{t_\p} \\
    & \mathbb{Q}_p
\end{tikzcd}\,,
\end{equation}
where $t_\p$ is a $\mathbb{Q}_p$-linear map. By Condition (a),
it is then
possible to extend $\log_\p$ to 
\begin{equation}\label{trace}
\begin{tikzcd}
 K_\p^{\ast}  \arrow{r}{\log_{\p}}  \arrow{rd}{\ell_\p} 
  & K_\p \arrow{d}{t_\p} \\
    & \mathbb{Q}_p\,.
\end{tikzcd}
\end{equation}
From now on, we fix the branch of the logarithm at $\p\mid p$ to be the one above.

Coleman--Gross define the  height pairing on $J$ by first constructing, for all finite primes $v$
of $K$, local height pairings
$h_v(D_1, D_2)\in\Q_p$ for divisors of degree zero $D_1,D_2\in \Div^0(X_v)$  with disjoint support. These have the property that for $D_1,D_2\in \Div^0(X)$ with disjoint support,
only finitely many $h_v(D_1, D_2)\colonequals h_v(D_1\otimes K_v, D_2\otimes K_v)$ are nonzero.
Hence it makes sense to define 
$$
h(D_1,D_2)\colonequals \sum_{v\sm{\text{ finite}}} 
 h_v(D_1,D_2)\,.
$$
By~\cite[Sections~1,~5]{Coleman-Gross-Heights}, 
the pairing $h$ induces a bilinear pairing
$$
h \colon J(K)\times J(K)\to \Q_p\,.
$$
The local height pairings $h_v$ for $v\nmid p$ are symmetric, whereas the local height pairing $h_{\p}$ for $\p\mid p$ is symmetric if and only if $W_\p$ is isotropic with respect to the cup product
pairing~\eqref{eq:cup-prod} by~\cite[Proposition 5.2]{Coleman-Gross-Heights}.

We  now recall the construction of the local pairings $h_v$.
Since the local terms $h_v$ depend only on $X_v$, we ease  notation as follows:
we write $k=K_v$, $\O = \O_v$, $\pi=\pi_v$, and $C=X_v$.
Let $\chi=\ell_v \colon k^{\ast}\longrightarrow\mathbb{Q}_p$.

\begin{proposition}{\cite[Proposition 1.2]{Coleman-Gross-Heights}}\label{P:htaway}
  If $p\in \O^\ast$, then there exists a unique function $h_v(D_1, D_2)$, defined for
all $D_1,D_2 \in \Div^0(C)$ with disjoint support, which is continuous, symmetric, bi-additive,
and takes values in $\mathbb{Q}_p$, such that for all $f\in k(C)^{\ast}$  we have
  $$h_v(\dv(f),D_2)=\chi(f(D_2))\,,\quad \text{if  }\supp(\dv(f))\cap
  \supp(D_2)=\emptyset\,.$$
\end{proposition}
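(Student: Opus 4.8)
The statement is a local, non-archimedean avatar of N\'eron's local symbol, and the plan is to prove it in two stages: uniqueness by a soft argument, and existence via intersection theory on a regular model. Throughout, the decisive consequence of the hypothesis $p\in\O^\ast$ is that $\chi$ kills $\O^\ast$, hence is determined by the single value $\chi(\pi)\in\Q_p$, and that the residue characteristic $q$ of $k$ is different from $p$; the point where continuity becomes essential (and hard) is flagged at the end.

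\emph{Uniqueness.} Let $\delta$ be the difference of two functions with the listed properties; it is bi-additive, symmetric, continuous, and $\delta(\dv(f),D_2)=0$ whenever $\supp(\dv(f))\cap\supp(D_2)=\emptyset$. Fix $D_2$. If $D_1$ and $D_1'$ are linearly equivalent degree-$0$ divisors both supported away from $\supp(D_2)$, then $D_1-D_1'=\dv(f)$ with $\supp(\dv(f))\subseteq\supp(D_1)\cup\supp(D_1')$, hence disjoint from $\supp(D_2)$, so $\delta(D_1,D_2)=\delta(D_1',D_2)$. Since $k$ is infinite, every class in $\Pic^0(C)$ has a representative avoiding the finite set $\supp(D_2)$, and bi-additivity in the first slot is available on divisors supported away from $D_2$; hence $D_1\mapsto\delta(D_1,D_2)$ descends to a \emph{continuous} homomorphism $J(k)\to\Q_p$. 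Now $J(k)$ has a finite-index pro-$q$ subgroup (e.g. the kernel of reduction to the special fibre of the N\'eron model), and since $\Q_p$ is torsion-free it suffices to see that any continuous homomorphism $\varphi$ from a pro-$q$ group to $(\Q_p,+)$ is trivial: for $g$ in such a group $q^ng\to 0$, so $q^n\varphi(g)\to 0$, while $v_p(q^n\varphi(g))=v_p(\varphi(g))$ is constant, forcing $\varphi(g)=0$. Hence $\delta(\cdot,D_2)\equiv 0$, and as $D_2$ was arbitrary, $\delta\equiv 0$.

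\emph{Existence.} I would fix a proper flat regular model $\calC\to\Spec\O$ with generic fibre $C$ and special fibre $\calC_s=\sum_i m_iV_i$, and for $D=\sum_P n_P(P)\in\Div^0(C)$ form the sum of Zariski closures $\overline D=\sum_P n_P\overline{(P)}$. Using that the intersection pairing on $\bigoplus_i\Q V_i$ is negative semi-definite with radical $\Q\cdot\calC_s$ (here $\calC_s$ is connected because $C$ is geometrically connected), the system $(\overline D+\Phi)\cdot V_i=0$ for all $i$ has a solution $\Phi\in\bigoplus_i\Q V_i$ precisely because $\overline D\cdot\calC_s=\deg D=0$, unique modulo $\Q\cdot\calC_s$; write $\overline D^{\#}=\overline D+\Phi$. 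One then \emph{defines} $h_v(D_1,D_2)=\chi(\pi)\cdot(\overline{D_1}^{\#}\cdot\overline{D_2})$ for $D_1,D_2$ with disjoint support. This is well defined because $\calC_s\cdot\overline{D_2}=\deg D_2=0$; bi-additive since $D\mapsto\overline D^{\#}$ is $\Q$-linear; symmetric because $\overline{D_1}^{\#}\cdot\overline{D_2}=\overline{D_1}^{\#}\cdot\overline{D_2}^{\#}=\overline{D_1}\cdot\overline{D_2}^{\#}$; and $\Q_p$-valued. Independence of the model is the usual argument: dominate two regular models by a third and check invariance under blow-ups of regular points via the projection formula. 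For the characterizing identity, note $\dv_{\calC}(f)=\overline{\dv(f)}+F$ with $F$ fibral and $\dv_{\calC}(f)\cdot V_i=0$ for all $i$, so $F$ solves the defining system and $\overline{\dv(f)}^{\#}=\dv_{\calC}(f)$ modulo $\Q\cdot\calC_s$; hence $h_v(\dv(f),D_2)=\chi(\pi)\sum_Q n_Q(\dv_{\calC}(f)\cdot\overline{(Q)})=\chi(f(D_2))$, using $\dv_{\calC}(f)\cdot\overline{(Q)}=f_{k(Q)/k}\,v_{k(Q)}(f(Q))$ together with $\chi|_{\O^\ast}=0$ and the norm definition of $f(D_2)$.

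\emph{Main obstacle.} The technical heart is continuity in the $v$-adic topology on the points of the divisors — which is also exactly what makes the uniqueness step valid, since a non-continuous homomorphism $J(k)\to\Q_p$ need not vanish. Here I would decompose $h_v(D_1,D_2)=\chi(\pi)(\overline{D_1}\cdot\overline{D_2}+\Phi_1\cdot\overline{D_2})$: the term $\Phi_1\cdot\overline{D_2}$ only records which components of $\calC_s$ the closures of the points of $D_2$ meet, together with the rational fibral correction of $D_1$, so it is locally constant on residue disks; and $\overline{(P)}\cdot\overline{(Q)}$ is locally constant on $\{P\ne Q\}$, because moving $P$ within a disk smaller than its $v$-adic distance to $Q$ does not change it. Composing an integer-valued locally constant function with $n\mapsto\chi(\pi)n$ then gives a continuous map from the locus of disjoint-support pairs to $\Q_p$. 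Making this genuinely uniform as several points vary and approach collision — always staying inside the locus of disjoint supports — is the delicate part; passing to a model on which the relevant sections are disjoint and meet $\calC_s$ at smooth points of a single component, and bounding intersection multiplicities in terms of $v$-adic distances, is how I would control it.
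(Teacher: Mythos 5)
This proposition is quoted from Coleman--Gross; the paper itself gives no proof, only the one-paragraph reminder of the intersection-theoretic construction $h_v(D_1,D_2)=\chi(\pi)\cdot(\calD_1\cdot\calD_2)$ on a regular model, with a pointer to Gross's treatment of the local N\'eron symbol. Your existence proof is exactly that construction, with the defining properties correctly verified (orthogonalization exists because $\overline D\cdot\calC_s=\deg D=0$, symmetry and bi-additivity from orthogonality to the fibral part, the identity on principal divisors from $\dv_{\calC}(f)\cdot V_i=0$), and your uniqueness proof --- descend to a continuous homomorphism on $J(k)$ and kill it using the finite-index pro-$q$ kernel of reduction, where $p\in\O^\ast$ forces $q\ne p$ --- is the standard one from Gross's paper, to which the present paper also refers. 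The one step you flag but do not fully close --- that $\delta(\cdot,D_2)$ really induces a \emph{continuous} homomorphism on $J(k)$, and the $v$-adic continuity of $h_v$ itself in the points --- is exactly the technical content that the cited sources supply; your sketch via ultrametric local constancy of intersection numbers is the right idea.
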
 
We briefly review the construction of $h_v$ in the situation of Proposition~\ref{P:htaway}.
Let $\calC/\O$ be a regular model of $C$ and let $(- \cdot-)$ be the ($\Q$-valued)
intersection multiplicity on $\calC$. Let $\calD_1$ and $\calD_2$ be extensions of
$D_1$ and $D_2$ to $\calC$ such that $(\calD_i\cdot V)=0$
for all vertical divisors $V$ on $\calC$. Then
\begin{equation*}\label{eq:intmult}
  h_v(D_1,D_2)=\chi(\pi)\cdot (\calD_1\cdot \calD_2)\,.
\end{equation*}
Up to a constant, this is equal to the local N\'eron symbol at $v$, discussed in~\cite[Sections~2,~3]{Gross-local-heights}, \sm{which is a local summand of the canonical real-valued height pairing}. It does not depend on the choice of model or on the choices of $\mathcal{D}_1,\mathcal{D}_2$.
\sm{Algorithms to compute this pairing in practice are discussed in  \cite{Hol12,
Mul14, Raymond-David-Steffen}. The {\tt Magma}-command {\tt LocalIntersectionData} uses an implementation of the algorithm in~\cite{Mul14} to compute $h_v$ for all $v\nmid p$; it requires the computation of a regular model using {\tt Magma}'s {\tt RegularModel}-package. A {\tt Magma}-implementation from the algorithm from~\cite{Raymond-David-Steffen}, which does not require $X$ to be hyperelliptic, but currently only works when $K=\Q$, can be obtained from {\url{https://github.com/emresertoz/neron-tate}}.}

The remaining case is when $p\notin\O^\ast$, which we assume from now on
~\footnote{Colmez~\cite{Col98} has shown that there is a uniform
analytic
construction of $h_v$ for all finite primes $v$.}. In particular, $C$ has good reduction.
\begin{definition}\label{diffdef}
A meromorphic differential on $C/k$ is called 
  \begin{itemize}
    \item of the \textit{second kind} if all its residues are~0;
    \item of the \textit{third kind} if it has at most simple poles with residues in $\mathbb{Z}$.
  \end{itemize}
\end{definition}
Let $W=W_v$ be the subspace of $\hdr(C/k)$ chosen above, complementary to
\sm{the image of the holomorphic differentials in $\hdr(C/k)$.}
 
\begin{remark}\label{rmk:W-Unit-Root}
When $C$ has good ordinary reduction, there is a canonical choice of
  the complement $W$. Namely, Frobenius acts on $\hdr(C/k)$ with $2g$
  eigenvalues, of which exactly $g$ are $p$-adic units in the ordinary
  case; the eigenspace corresponding to all of them is called the  {\it
  unit root subspace}, which is automatically  isotropic with respect to
  the cup product \sm{(see for instance~\cite[Theorem~3.1]{Iov00})}.
\end{remark}

Differentials of the form $df$ for $f\in k(X)$ are called {\it exact}; they are of the second kind,
whereas differentials of the form $df/f$ for $f\in k(X)^{\ast}$ are called {\it logarithmic}; they are of the third kind.
We denote by $T(k)$ the group of differentials on $C$ of the third kind and by
$T_l(k)$ the group of logarithmic differentials.
The \textit{residue divisor homomorphism}
$$\Res\colon T(k)\rightarrow \Div^0(C), \hspace{5mm} \Res(\omega)=\sum_{P\in \sg{C(\bar{k})}}\Res_P(\omega)P$$
is surjective.
\sm{We now use the space $W$ to associate a unique differential of the
third kind to a given residue divisor.

Let $\psi\colon T(k)/T_l(k)\rightarrow \hdr(C/k)$ denote 
the canonical homomorphism from
\cite[Proposition~2.5]{Coleman-Gross-Heights}. For our purposes, it
suffices to know that it satisfies Proposition~\ref{prop:Cup-of-psi} below;
we describe a generalization of $\psi$ \sg{in the proof of Proposition~\ref{prop:psi(omega_g)}.}} 
We extend $\psi$ to a linear map on all meromorphic differentials $\omega$ on $C/k$ as follows: Write 
$$\omega = \sum_{i=1}^n
  a_i\mu_i+\eta\,,$$ where $a_i\in \overline{k}$, $\mu_i\in T(k)$ and $\eta$ is a differential
  of the second kind. Then we set
  $$\psi(\omega)\colonequals\sum_{i=1}^n a_i\psi(\mu_i)+[\eta]\in
  \hdr(C/k)\,.$$
  In particular, $\psi$ maps a differential of the second kind to its class
in $\hdr(C/k)$.
\begin{proposition}\cite[Proposition~3.2]{Coleman-Gross-Heights}\label{prop:Unique-differential-3rd-kind}
For any divisor $D$ of degree 0 on $C$, there is a  unique differential form $\omega_D$ of the third
  kind such that
$$\Res(\omega_D)=D, \hspace{5mm} \psi(\omega_D)\in W\,.$$
Hence the choice $W$ induces a section $\Div^0(C)\longrightarrow T(k)$ of the residue
  divisor homomorphism, given by $D\mapsto \omega_D$. If $D=\dv(f)$ is principal, then $\omega_D=\frac{df}{f}$.
\end{proposition}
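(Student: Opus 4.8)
The plan is to establish existence and uniqueness of $\omega_D$ by reducing to the study of the map $\psi\colon T(k)/T_l(k)\to \hdr(C/k)$, whose properties we may quote from \cite[Proposition~2.5]{Coleman-Gross-Heights}. First I would recall the key structural facts about $\psi$: that it fits into an exact sequence expressing $T(k)/T_l(k)$ as an extension whose associated graded pieces are $\Div^0(C)$ (via $\Res$) and a subspace of $\hdr(C/k)$, so that one has a commutative diagram relating $\Res$, $\psi$, and the quotient $\hdr(C/k)\to \hdr(C/k)/F^1$. Concretely, the composition of $\psi$ with the projection to the holomorphic part vanishes on $\ker(\Res)$, and $\Res$ restricted to $T_l(k)$ surjects onto the lattice spanned by principal divisors; these are the inputs that make the counting argument work.

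Given a degree-$0$ divisor $D$, surjectivity of $\Res$ (stated in the excerpt) produces \emph{some} differential $\omega$ of the third kind with $\Res(\omega)=D$. Any other such differential differs from $\omega$ by an element of $\ker(\Res)=\hdr(C/k)$-worth of second-kind differentials — more precisely, $\omega' = \omega + \eta$ where $\eta$ is of the second kind, and the ambiguity in $[\eta]\in\hdr(C/k)$ is exactly all of $\hdr(C/k)$ (every de~Rham class is represented by a differential of the second kind, since $C$ has good reduction and one works over a field). Then $\psi(\omega') = \psi(\omega) + [\eta]$, so as $\eta$ ranges over second-kind differentials, $\psi(\omega')$ ranges over the full coset $\psi(\omega) + \hdr(C/k) = \hdr(C/k)$. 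Since $\hdr(C/k) = W \oplus F^1$ (the decomposition defining $W$), there is exactly one choice making $\psi(\omega')\in W$: subtract off the $F^1$-component of $\psi(\omega)$, realized by a holomorphic differential. This gives both existence and uniqueness of $\omega_D$, and additivity of $D\mapsto\omega_D$ is then immediate from additivity of $\Res$ and $\psi$ together with uniqueness, so it defines a section of $\Res$.

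For the final assertion, if $D = \dv(f)$ then $\tfrac{df}{f}$ is logarithmic, hence of the third kind with $\Res(\tfrac{df}{f}) = \dv(f) = D$; and $\psi(\tfrac{df}{f}) = 0\in W$ because $\tfrac{df}{f}\in T_l(k)$ lies in the kernel of $\psi$ by construction of $\psi$ on $T(k)/T_l(k)$. By the uniqueness just proved, $\omega_D = \tfrac{df}{f}$. The main obstacle is making precise the claim that the ambiguity in $\psi(\omega')$ really sweeps out all of $\hdr(C/k)$ — i.e.\ that every de~Rham cohomology class is represented by a second-kind differential with prescribed (in fact arbitrary) relationship to $\omega$; this is where one genuinely uses that $C/k$ is a smooth projective curve over a field (so $\hdr$ is computed by meromorphic differentials of the second kind modulo exact ones) and the precise compatibility of $\psi$ with the class map, all of which is packaged in \cite[Proposition~2.5]{Coleman-Gross-Heights}. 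Everything else is linear algebra in the two-term filtration $F^1\subset \hdr(C/k)$ together with the splitting provided by the choice of $W$.
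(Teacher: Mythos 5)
Your overall shape---existence by subtracting off a holomorphic differential representing the $F^1$-component of $\psi(\omega)$, and $\omega_{\dv(f)}=\frac{df}{f}$ by uniqueness plus $\psi(T_l(k))=0$---is the right one, but your analysis of the ambiguity in the choice of $\omega$ is wrong, and the error, taken at face value, would destroy the very uniqueness you are claiming. (For context: the paper does not reprove this statement, it cites \cite[Proposition~3.2]{Coleman-Gross-Heights} as a black box, so the issue is purely with the logic of your argument.)

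You assert that two differentials of the third kind with residue divisor $D$ differ by an arbitrary differential of the second kind, so that $\psi(\omega')$ sweeps out the \emph{entire} coset $\psi(\omega)+\hdr(C/k)=\hdr(C/k)$. This is wrong on two counts. First, adding a general second-kind differential $\eta$ to a third-kind $\omega$ produces something with poles of arbitrary order, which is no longer of the third kind and hence not a candidate for $\omega_D$ at all. Second, the difference of two third-kind differentials with the same residue divisor has at most simple poles and all residues zero, hence is \emph{holomorphic}. So the ambiguity is exactly $H^0(C,\Omega^1)$, and $\psi(\omega')$ sweeps out only the coset $\psi(\omega)+F^1$, not all of $\hdr(C/k)$. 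This distinction is not cosmetic: if the image really were all of $\hdr(C/k)=W\oplus F^1$, then an entire $W$-coset of admissible $\omega'$ would satisfy $\psi(\omega')\in W$, and $\omega_D$ would not be unique.

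The corrected argument runs as follows. For existence, pick any $\omega\in T(k)$ with $\Res(\omega)=D$ (surjectivity of $\Res$), decompose $\psi(\omega)=w+f$ with $w\in W$ and $f\in F^1$, choose a holomorphic differential $\eta$ with $[\eta]=f$ (possible since $F^1$ is by definition the image of the holomorphic forms), and set $\omega_D\colonequals\omega-\eta$; then $\omega_D$ is still of the third kind with $\Res(\omega_D)=D$ and $\psi(\omega_D)=w\in W$. For uniqueness, if $\omega_1,\omega_2$ both satisfy the conclusion, then $\omega_1-\omega_2$ is holomorphic (simple poles, zero residues), so $[\omega_1-\omega_2]\in F^1$; on the other hand $[\omega_1-\omega_2]=\psi(\omega_1)-\psi(\omega_2)\in W$, and since $W\cap F^1=0$ we get $[\omega_1-\omega_2]=0$. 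Finally, a holomorphic differential on a smooth proper curve in characteristic zero with vanishing de~Rham class is itself zero (Hodge--de~Rham degeneracy), so $\omega_1=\omega_2$. Your treatment of the principal case and of additivity is fine once this is repaired.
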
 
Using Proposition~\ref{prop:Unique-differential-3rd-kind}, we can now
define the central object of this article.

\begin{definition}\label{def:local-height-above-p}
Let $D_1$ and $D_2$ be two divisors on $C$ of degree zero with disjoint support. The {\it local $p$-adic height pairing} is given by
$$h_v(D_1,D_2)\colonequals t_v\left(\int_{D_2}\omega_{D_1}\right),$$ 
  where $t_v$ is the linear map introduced in~\eqref{trace-on-O_K} and $\int_{D_2}\omega_{D_1}$ is a Coleman integral. 
\end{definition} 
\sm{The Coleman integral $\int^Q_P\omega$ between two points $P,Q$ such that $\omega$ has no pole at $P$ or $Q$ is given by termwise integration of the convergent power series expansion of $\omega$ around $P$ if $P$ and $Q$ have the same reduction; such integrals are called~\textit{tiny}. 
Coleman~\cite{ColemanI} and Coleman--de Shalit~\cite{Coleman-deShalit} extended this to general $P$ and $Q$ by analytic continuation along Frobenius. Coleman integrals along divisors are then defined by additivity. If $D$ is a divisor of degree~0 and $\omega$ is holomorphic on $C$, then the integral $\int_D\omega$ can also be defined using the abelian logarithm on $J(\Q_p)$.}

The {\it (algebraic) cup product pairing} $\hdr(C/k)\times \hdr(C/k)\longrightarrow k$ is given by 
\begin{equation}\label{eq:cup-prod}
  ([\mu_1],[\mu_2])\mapsto [\mu_1]\cup[\mu_2]\colonequals \sum_{P\in
  C(\bar{k})} \Res_P\left(\mu_2\int\mu_1\right),
\end{equation}
where $\mu_1$ and $\mu_2$ are differentials of the second kind. It
defines a canonical non-degenerate alternating form on $\hdr(C/k)$.

\begin{proposition}{\cite[Proposition 5.2]{Coleman-Gross-Heights}}\label{prop:height-properties}
The local height pairing $h_v(D_1, D_2)$ is continuous, bi-additive and satisfies 
$$ h_v(\dv(f),D_2)=\chi(f(D_2))\,.$$
Furthermore, it is symmetric if and only if the subspace $W$ of $\hdr(C/k)$ is isotropic with respect to the cup product pairing.
\end{proposition}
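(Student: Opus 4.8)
The plan is to establish bi-additivity, continuity and the identity $h_v(\dv(f),D_2)=\chi(f(D_2))$ essentially formally, and to treat the equivalence of symmetry of $h_v$ with isotropy of $W$ as the one substantive point. For bi-additivity I would first observe that the section $D\mapsto\omega_D$ of Proposition~\ref{prop:Unique-differential-3rd-kind} is additive: $\Res$ is a homomorphism and $\psi(\omega_{D_1}+\omega_{D_2})=\psi(\omega_{D_1})+\psi(\omega_{D_2})\in W$, so the uniqueness in Proposition~\ref{prop:Unique-differential-3rd-kind} forces $\omega_{D_1+D_2}=\omega_{D_1}+\omega_{D_2}$. Combined with additivity of the Coleman integral $\int_{D_2}\omega$ in the divisor $D_2$ and linearity in $\omega$, together with $\Qp$-linearity of $t_v$, this gives bi-additivity; continuity is inherited from continuity of Coleman integration in the endpoints and of the linear map $t_v$. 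For the identity with $\chi$ I would set $D_1=\dv(f)$, so that $\omega_{D_1}=df/f$ by Proposition~\ref{prop:Unique-differential-3rd-kind}; writing $D_2=\sum_i n_i(P_i)$ with $\sum_i n_i=0$ and support disjoint from $\dv(f)$, and fixing a base point $b$, termwise integration of the logarithmic differential gives $\int_{D_2}\frac{df}{f}=\sum_i n_i\bigl(\log_v f(P_i)-\log_v f(b)\bigr)=\sum_i n_i\log_v f(P_i)=\log_v\!\bigl(f(D_2)\bigr)$, using $\sum_i n_i=0$ and the fixed branch $\log_v$ from~\eqref{trace}. Applying $t_v$ and the diagram~\eqref{trace} then yields $h_v(\dv(f),D_2)=t_v\bigl(\log_v f(D_2)\bigr)=\ell_v(f(D_2))=\chi(f(D_2))$.

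For symmetry the crucial input will be a reciprocity identity \[ \int_{D_2}\omega_{D_1}-\int_{D_1}\omega_{D_2}=\psi(\omega_{D_1})\cup\psi(\omega_{D_2}) \] (up to an overall sign), valid for degree-zero divisors $D_1,D_2$ with disjoint support. This I would deduce from the global reciprocity law for Coleman integrals — the vanishing of $\sum_{P\in C(\bar{k})}\Res_P\bigl(\mu_2\int\mu_1\bigr)$ for second-kind $\mu_1,\mu_2$ — together with the compatibility of $\psi$ on differentials of the third kind with the cup product recorded in Proposition~\ref{prop:Cup-of-psi}; this compatibility is exactly what the constructions of $\psi$ and of the cup product~\eqref{eq:cup-prod} in~\cite{Coleman-Gross-Heights} are designed to produce. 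Granting the identity, $h_v(D_1,D_2)-h_v(D_2,D_1)=t_v\bigl(\psi(\omega_{D_1})\cup\psi(\omega_{D_2})\bigr)$; since $\psi(\omega_{D_1}),\psi(\omega_{D_2})\in W$, the right-hand side vanishes for all admissible pairs whenever $W$ is isotropic, so $h_v$ is symmetric in that case. Conversely, if $W$ is not isotropic, I would use that every class in $W$ is of the form $\psi(\omega_D)$ for a suitable degree-zero divisor $D$ (a consequence of the surjectivity of $\psi$ and the uniqueness in Proposition~\ref{prop:Unique-differential-3rd-kind}) and that $t_v\colon k\to\Qp$ is surjective to find divisors $D_1,D_2$ with disjoint support for which $t_v\bigl(\psi(\omega_{D_1})\cup\psi(\omega_{D_2})\bigr)\neq0$; hence $h_v$ is not symmetric.

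The main obstacle will be the reciprocity identity, and underlying it the precise definition and basic properties of $\psi$ on differentials of the third kind, which must be arranged so that the discrepancy between $\int_{D_2}\omega_{D_1}$ and $\int_{D_1}\omega_{D_2}$ comes out exactly as the cup product $\psi(\omega_{D_1})\cup\psi(\omega_{D_2})$; this is the genuine content, supplied by~\cite{Coleman-Gross-Heights}, whereas bi-additivity, continuity and the relation with $\chi$ are formal. A secondary technical point is the converse direction of the symmetry criterion, where one must realize prescribed classes in $W$ by divisors whose supports can be kept disjoint while keeping the cup product out of the kernel of $t_v$.
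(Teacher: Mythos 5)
The paper itself offers no proof of this proposition: it is stated as a direct citation of \cite[Proposition 5.2]{Coleman-Gross-Heights}, so there is no in-paper argument to compare yours against. Your sketch is nonetheless structured in essentially the standard way the result is proved in the cited reference: bi-additivity and continuity formal from uniqueness in Proposition~\ref{prop:Unique-differential-3rd-kind} and linearity of the Coleman integral and of $t_v$; the identity $h_v(\dv(f),D_2)=\chi(f(D_2))$ from $\omega_{\dv(f)}=df/f$, the evaluation $\int_{D_2}df/f=\log_v f(D_2)$ on a degree-zero divisor, and the factorization $\ell_v=t_v\circ\log_v$; and the symmetry criterion from a reciprocity identity expressing $\int_{D_2}\omega_{D_1}-\int_{D_1}\omega_{D_2}$ as the cup product of $\psi(\omega_{D_1})$ and $\psi(\omega_{D_2})$. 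That identity (with sign) is indeed the key lemma of \cite[Section 5]{Coleman-Gross-Heights}, and you are right to flag it as the genuine content you are taking for granted.

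The one place where your sketch has a real gap is the ``only if'' direction of the symmetry criterion. You assert that every class in $W$ is of the form $\psi(\omega_D)$ for a degree-zero divisor $D$, attributing this to ``surjectivity of $\psi$''. That is not what one gets: the assignment $D\mapsto\psi(\omega_D)$ kills principal divisors and descends to a group homomorphism from $J(k)$ (not a $k$-linear map) into the $k$-vector space $W$, and its image is a subgroup, not all of $W$. So one cannot simply hit an arbitrary pair $w_1,w_2\in W$ with $w_1\cup w_2\neq 0$. The argument can be repaired: the image is large enough (e.g.\ Zariski-dense, or a subgroup containing a lattice after base change) that one can find $D_1,D_2$ with $\psi(\omega_{D_1})\cup\psi(\omega_{D_2})\neq 0$; after further scaling by adjusting the divisors one can then ensure $t_v$ does not kill the resulting element of $k$, and disjointness of support is arranged by moving $D_1$ within its linear equivalence class (which does not change $\psi(\omega_{D_1})$). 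These three points — that the image is only a subgroup of $W$, that $t_v$ has a kernel, and that disjointness of support must be preserved — all need to be addressed explicitly for the converse to close; as written your proposal elides them.
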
 

Finally, we discuss an expression for the map $\psi$ due to Besser, which
is suitable for explicit computations.

\begin{definition}
For a meromorphic form $\omega$ and a form of the second kind $\rho$ on
  $C$, we define the \textit{local symbol} at a point $A\in C(\bar{k})$ by
$$\langle \omega,\rho\rangle _A\colonequals -\Res_A\left(\omega\int\rho\right),$$
where $\int\rho$ is the function $Q\mapsto\int_{Z}^{Q}\rho$, for some fixed
  point $Z\in C(\bar{k})$.
We define the \textit{global symbol} $\langle \omega,\rho\rangle $ as
$$\langle \omega,\rho\rangle \colonequals \sum_{A}\langle \omega,\rho\rangle _A,$$
where the sum is taken over all points in $C(\bar{k})$ where $\omega$ or $\rho$ have a singularity.
\end{definition} 
We note that although the local symbol depends on
the choice of the point $Z$, the definition of the global symbol does not depend on $Z$. 
In Section~\ref{sec:affine}, Step
(ii), we use the following statement. 
\begin{proposition}{\cite[Proposition 4.10]{Besser-Syntomic-2}}\label{prop:Cup-of-psi}
Let $\omega$ and $\rho$ be as above. Then
  $$\langle \omega,\rho\rangle =\psi(\omega)\cup\sm{ \rho}\,.$$ 
\end{proposition}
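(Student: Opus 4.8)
The plan is to use bilinearity to reduce to differentials $\omega$ of a single ``kind'', and then to invoke the construction of $\psi$. Since $\langle\,\cdot\,,\rho\rangle$ and $\psi$ are both linear in $\omega$ (for $\psi$ this is the definition, recalled above, of its extension to all meromorphic differentials) and the cup product is linear, it suffices to treat the cases $\omega = \eta$ of the second kind and $\omega = \mu$ of the third kind. Writing $D \colonequals \Res(\mu)$, the differential $\mu - \omega_D$ is holomorphic --- it has at most simple poles and residue divisor $0$ --- by Proposition~\ref{prop:Unique-differential-3rd-kind}, so the third-kind case reduces, via the second-kind case and the linearity of both sides, to proving the identity for $\omega = \omega_D$ with $D \in \Div^0(C)$.

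For $\omega = \eta$ of the second kind one has $\psi(\eta) = [\eta]$, hence $\psi(\eta)\cup\rho = [\eta]\cup[\rho] = \sum_P\Res_P(\rho\int\eta)$ by~\eqref{eq:cup-prod}, while $\langle\eta,\rho\rangle = -\sum_P\Res_P(\eta\int\rho)$; both sums run over the finitely many points where $\eta$ or $\rho$ is singular. Near any such $P$, the Coleman primitives $\int\eta$ and $\int\rho$ are Laurent series with no logarithmic term, because $\eta$ and $\rho$ have vanishing residues, and passing from the global primitive to a local one changes it by a constant, which does not affect $\Res_P$ since $\Res_P(\eta) = \Res_P(\rho) = 0$. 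Therefore $\Res_P(\eta\int\rho) + \Res_P(\rho\int\eta) = \Res_P\big(d\big((\int\eta)(\int\rho)\big)\big) = 0$ for each $P$, and summing yields $\langle\eta,\rho\rangle = [\eta]\cup[\rho]$. (A similar use of the residue theorem re-proves that~\eqref{eq:cup-prod} is well defined on de Rham classes.)

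The essential case is $\omega = \omega_D$ for $D \in \Div^0(C)$. Here one unwinds the construction of $\psi$ on differentials of the third kind in~\cite[Proposition~2.5]{Coleman-Gross-Heights}: it represents $\psi(\omega_D) \in \hdr(C/k)$ by a second-kind differential obtained from $\omega_D$ by subtracting, near each point of $\supp(D)$, a logarithmic term matching the local residue, and then gluing the local pieces into a global second-kind differential at the cost of an exact form. Substituting such a representative into~\eqref{eq:cup-prod} and comparing with $\langle\omega_D,\rho\rangle = -\sum_A\Res_A(\omega_D\int\rho)$, the two differ by a sum of residues of $\rho$, and of its Coleman primitive, against the logarithmic and exact corrections; this vanishes by the residue theorem, using crucially that $\rho$ is of the second kind, so that it has no residues and $\int\rho$ has no logarithmic singularities. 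In Besser's framework~\cite{Besser-Syntomic-2} this is cleaner, because there $\psi(\omega_D)$ is the class, in rigid (syntomic) cohomology, of $\omega_D$ together with its Coleman primitive, and the cup product is computed by precisely the residue pairing that defines the global symbol, so the identity becomes a matter of matching definitions.

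The main obstacle is this last step: transporting the construction of $\psi$ faithfully through the non-principal case, and controlling the period ambiguities inherent in Coleman primitives. For principal $D = \dv(f)$ one has $\omega_D = \tfrac{df}{f} \in T_l(k)$, so $\psi(\omega_D) = 0$, and the identity reduces to $\sum_A\Res_A\big(\tfrac{df}{f}\int\rho\big) = 0$ --- an instance of the reciprocity law for sums of residues that also underlies the independence of the global symbol from the auxiliary base point $Z$; the necessity of summing local symbols over \emph{all} singular points, and of the hypothesis $D \in \Div^0(C)$, is exactly what this period ambiguity forces.
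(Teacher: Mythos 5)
The paper does not supply a proof of this proposition at all --- it is imported verbatim as a citation to Proposition~4.10 of Besser's paper \cite{Besser-Syntomic-2}. So there is no paper proof to compare against; what you have written is an attempt to reconstruct Besser's argument from scratch, and it must be judged on its own merits.

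Your treatment of the second-kind case is correct and is essentially the standard argument: since $\eta$ and $\rho$ have vanishing residues at each singular point $P$, local Laurent primitives exist, the $Z$-dependent shift by a constant does not affect any residue, and
$$\Res_P\bigl(\eta\textstyle\int\rho\bigr)+\Res_P\bigl(\rho\textstyle\int\eta\bigr)=\Res_P\Bigl(d\bigl((\textstyle\int\eta)(\textstyle\int\rho)\bigr)\Bigr)=0\,,$$
giving $\langle\eta,\rho\rangle=[\eta]\cup[\rho]=\psi(\eta)\cup\rho$.

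The third-kind case, however, has a genuine gap. First, the ``reduction'' from general $\mu\in T(k)$ to $\omega=\omega_D$ is vacuous: $\omega_D$ is itself an arbitrary third-kind differential with residue divisor $D$, so nothing is gained. More importantly, your argument for $\omega=\omega_D$ never materialises into a computation. You describe how $\psi$ is built from local logarithmic corrections and assert that the discrepancy ``vanishes by the residue theorem'', and then, shifting ground, that in Besser's framework the identity is ``a matter of matching definitions''. Neither of these is a proof, and you concede as much by flagging ``the main obstacle'' at the end. The crux, which you isolate correctly but do not resolve, is a reciprocity statement of the form $\sum_A\Res_A\bigl(\tfrac{df}{f}\int\rho\bigr)=0$ for a Coleman primitive $\int\rho$. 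This does \emph{not} follow from the algebraic sum-of-residues theorem, because $\int\rho$ is a locally analytic (Coleman) function, not a rational one; the differential $\tfrac{df}{f}\int\rho$ is not algebraic, and the global vanishing of its residues is precisely the nontrivial content of Coleman's/Besser's reciprocity (cf. Coleman's torsion-points paper and \cite[\S 4]{Besser-Syntomic-2}). Invoking it here without proof or a precise citation is circular, since the statement you are trying to establish for principal $D$ is essentially equivalent to it. To close the gap you would need either to reproduce Besser's argument working directly with the class $(\omega_D,\int\omega_D)$ in rigid cohomology of the wide open $\calU$ and the explicit cohomological cup product there, or to cite Coleman's reciprocity law as an external input and verify that the local correction terms you describe genuinely cancel against the defining residues of the global symbol.
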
 
An important application of this result is the following independence result.
\begin{corollary}\label{cor:psi-commutes-with-isomoprhisms}
  Let $\tau\colon C\to C'$ be an isomorphism of curves and let $\omega'$ be a
 differential of the third kind on $C'$. Then we have
  $$\psi(\tau^{*}\omega')=\tau^{*}(\psi(\omega'))\,.$$
\end{corollary}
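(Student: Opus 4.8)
The plan is to use the characterization of $\psi$ given by Besser's formula (Proposition~\ref{prop:Cup-of-psi}), namely $\langle\omega,\rho\rangle = \psi(\omega)\cup\rho$, together with the fact that the cup product is intrinsic and hence compatible with the pullback isomorphism $\tau^*\colon \hdr(C'/k)\to\hdr(C/k)$. Concretely, I would show that for every differential of the second kind $\rho'$ on $C'$,
$$\psi(\tau^*\omega')\cup \tau^*\rho' = \tau^*(\psi(\omega'))\cup\tau^*\rho'\,,$$
and then conclude equality of $\psi(\tau^*\omega')$ and $\tau^*(\psi(\omega'))$ in $\hdr(C/k)$ from the non-degeneracy of the cup product pairing on $\hdr(C/k)$ stated after~\eqref{eq:cup-prod}. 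Since $\tau^*$ is an isomorphism, $\tau^*\rho'$ ranges over all classes of differentials of the second kind on $C$, so testing against these suffices.

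The key step is then to check that both sides, paired against $\tau^*\rho'$, compute the same local/global symbol on the curve $C$. For the left-hand side, Proposition~\ref{prop:Cup-of-psi} directly gives $\psi(\tau^*\omega')\cup\tau^*\rho' = \langle \tau^*\omega', \tau^*\rho'\rangle_C$, the global symbol computed on $C$. For the right-hand side I would use functoriality of the cup product under $\tau^*$ — that $\tau^*a\cup\tau^*b = \tau^*(a\cup b)$ in $\hdr$, where the target cup product lands in $k$ and $\tau^*$ acts as the identity there (the cup product is a number) — so $\tau^*(\psi(\omega'))\cup\tau^*\rho' = \psi(\omega')\cup\rho' = \langle\omega',\rho'\rangle_{C'}$, the global symbol on $C'$. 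Thus the corollary reduces to the identity
$$\langle \tau^*\omega',\tau^*\rho'\rangle_C = \langle\omega',\rho'\rangle_{C'}\,,$$
i.e.\ invariance of the global symbol under isomorphism. This in turn follows from the compatibility of residues, of the Coleman integral, and of poles/singularities with $\tau$: if $Z$ is the auxiliary base point used on $C$, choose $\tau(Z)$ as the base point on $C'$, so that $\int_{\tau(Z)}^{\tau(Q)}\rho' = \int_Z^Q \tau^*\rho'$ (pullback commutes with Coleman integration along paths), and then $\Res_A(\tau^*\omega'\cdot\int\tau^*\rho') = \Res_{\tau(A)}(\omega'\cdot\int\rho')$ since $\tau$ identifies a neighbourhood of $A$ with one of $\tau(A)$ compatibly with the residue map. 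Summing over the finitely many singular points, which $\tau$ matches bijectively, gives the claimed equality; and since the global symbol is independent of the base point, the particular choice $\tau(Z)$ is harmless.

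The main obstacle I anticipate is bookkeeping around the Coleman integral rather than anything conceptual: one must ensure that the analytic continuation defining $\int\rho$ transforms correctly under $\tau$ (it does, because $\tau$ is an isomorphism of curves with good reduction and hence commutes with Frobenius up to the induced map, so the Frobenius-equivariant construction of Coleman integration is respected), and one must be slightly careful that $\tau^*\omega'$ is again of the third kind and $\tau^*\rho'$ again of the second kind so that both symbols are defined — this is immediate since pullback along an isomorphism preserves orders of poles and residues. A cleaner alternative, which avoids the symbol computation entirely, is to observe that the decomposition $\omega' = \sum a_i\mu_i' + \eta'$ used to extend $\psi$ pulls back to a valid decomposition $\tau^*\omega' = \sum a_i\tau^*\mu_i' + \tau^*\eta'$ on $C$, reducing the statement to the case of a genuine differential of the third kind $\omega'\in T(k')$; for that case one invokes the uniqueness in Proposition~\ref{prop:Unique-differential-3rd-kind} once one knows $\tau^*$ sends $W_{C'}$ to an admissible complement $W_C$ — but since the corollary as stated does not fix the complements, the symbol-based argument above is the most self-contained route.
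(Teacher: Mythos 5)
Your proposal is correct and takes essentially the same route as the paper, whose own proof simply cites Proposition~\ref{prop:Cup-of-psi} together with Lemma~3.8 of Besser's $p$-adic Arakelov paper. Your reduction to the invariance of the global symbol $\langle\cdot,\cdot\rangle$ under $\tau$, closed off via functoriality and nondegeneracy of the cup product, fills in exactly what that combination delivers (the only minor quibble being that functoriality of the Coleman integral under $\tau$ is better justified by the change-of-variables formula of Coleman--de~Shalit, which the paper invokes elsewhere, than by any remark about $\tau$ ``commuting with Frobenius'').
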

\begin{proof}
  This follows from Proposition~\ref{prop:Cup-of-psi}
  and~\cite[Lemma~3.8]{Besser-padic-Arakelov}.
\end{proof}

\begin{corollary}\label{cor:height-independence}
  Suppose that $\tau\colon C\to C'$ is an isomorphism of curves. Let $W$
  (respectively $W'$) be a complementary subspace of $\hdr(C/k)$
  (respectively $\hdr(C'/k)$) such that \sm{$\tau^*(W') = W$}.
  Let $h_p$ (respectively $h'_p$) be the local height on $C$ with respect to $W$
  (respectively on $C'$ with respect to $W'$). Suppose that $D_1,D_2\in \Div^0(C)$ have disjoint support. Then we have
$$
h_p'(\tau_*(D_1), \tau_*(D_2)) = h_p(D_1,D_2)\,.
$$
\end{corollary}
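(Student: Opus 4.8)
The plan is to reduce the claim to two functoriality facts: that the section $D\mapsto\omega_D$ of the residue divisor homomorphism produced by Proposition~\ref{prop:Unique-differential-3rd-kind} is compatible with $\tau$, and that Coleman integration along divisors commutes with pullback by an isomorphism. Note first that since $\tau$ is an isomorphism, $\tau_*D_1$ and $\tau_*D_2$ again have disjoint support, so $h_p'(\tau_*D_1,\tau_*D_2)$ is defined; also $\tau^*=(\tau_*)^{-1}$ on divisors, and the map $t_v$ of Definition~\ref{def:local-height-above-p} is the same for $C$ and $C'$ since it depends only on $v$ and the chosen character.

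First I would prove the identity $\tau^*\bigl(\omega'_{\tau_*D_1}\bigr)=\omega_{D_1}$. Pullback along an isomorphism sends a differential of the third kind to one of the third kind, and because $\tau$ is a local isomorphism at every point one has $\Res_P(\tau^*\eta)=\Res_{\tau(P)}(\eta)$, so $\Res\bigl(\tau^*(\omega'_{\tau_*D_1})\bigr)=\tau^*\Res(\omega'_{\tau_*D_1})=\tau^*\tau_*D_1=D_1$. For the Hodge-filtration condition, Corollary~\ref{cor:psi-commutes-with-isomoprhisms} gives $\psi\bigl(\tau^*(\omega'_{\tau_*D_1})\bigr)=\tau^*\bigl(\psi(\omega'_{\tau_*D_1})\bigr)\in\tau^*(W')=W$. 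By the uniqueness statement in Proposition~\ref{prop:Unique-differential-3rd-kind}, a differential of the third kind with residue divisor $D_1$ and $\psi$-class in $W$ is exactly $\omega_{D_1}$, whence the identity.

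Next I would invoke functoriality of the Coleman integral under $\tau$, namely $\int_{D_2}\tau^*\omega'=\int_{\tau_*D_2}\omega'$ for any meromorphic $\omega'$ on $C'$ whose polar locus avoids $\supp(D_2)$. This is immediate for tiny integrals, where the integral is the termwise integration of a convergent power series and $\tau$ merely effects a change of local coordinate, and it extends to the general case because $\tau$ is compatible with the Frobenius structures used in the analytic continuation — the same functoriality underlying Corollary~\ref{cor:psi-commutes-with-isomoprhisms} and the change-of-variables arguments of Section~\ref{sec:affine}. Combining this with Definition~\ref{def:local-height-above-p} and the previous paragraph,
$$
h_p(D_1,D_2)=t_v\!\left(\int_{D_2}\omega_{D_1}\right)=t_v\!\left(\int_{D_2}\tau^*(\omega'_{\tau_*D_1})\right)=t_v\!\left(\int_{\tau_*D_2}\omega'_{\tau_*D_1}\right)=h_p'(\tau_*D_1,\tau_*D_2)\,.
$$

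The main obstacle is making the Coleman-integration functoriality precise: an isomorphism of smooth curves over $k$ with good reduction need not extend to an isomorphism of the chosen integral models, so one cannot simply compare special fibres and must instead appeal to the intrinsic functoriality of Coleman's theory (equivalently, of Besser's finite-polynomial/syntomic formulation). Once that is granted, the residue and $\psi$ bookkeeping in the first two paragraphs is routine.
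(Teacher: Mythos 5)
Your proof is correct, and it is the one the paper implicitly intends: the paper states Corollary~\ref{cor:height-independence} without proof immediately after Corollary~\ref{cor:psi-commutes-with-isomoprhisms}, expecting the reader to combine that corollary with the uniqueness in Proposition~\ref{prop:Unique-differential-3rd-kind} and the change-of-variables formula for Coleman integrals (as used in Lemma~\ref{L:change}, citing Coleman--de Shalit). Your two-step reduction — first $\tau^*\bigl(\omega'_{\tau_*D_1}\bigr)=\omega_{D_1}$ via residue and $\psi$ bookkeeping, then functoriality of $\int_{D_2}$ — supplies exactly that omitted argument, and your closing remark about Coleman integration being intrinsic (not tied to a choice of integral model) correctly identifies the only non-formal ingredient.
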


\begin{remark}\label{R:other_heights}
  The $p$-adic height should be viewed as a $p$-adic analogue of the 
classical (N\'eron--Tate) canonical height, which
takes values in $\R$. As for the latter, there 
are several constructions of $p$-adic height pairings on abelian varieties taking values
in $\Q_p$: for instance by Mazur and Tate \cite{Mazur-Tate-p-adic-heights}, and by Schneider
\cite{Schneider-p-adic-heights1}.
  Nekov\'{a}\v{r}~\cite{Nek93} has also given a motivic construction of
  $p$-adic heights
  on fairly general Galois representations. When the abelian
variety in question is the Jacobian of a curve with good ordinary reduction at $p$,
  these constructions are all equivalent to the construction of Coleman and
  Gross,
  which we prefer to work with due to its simplicity. 
  \end{remark}

  \subsection{Wide open spaces}\label{subsec:}
\sm{
For a point $P\in C(\Q_p)$, we denote 
the \textit{residue disc} that contains $P$ by 
\begin{equation}\label{res_disc}
D(P)\colonequals\{Q\in C(\Q_p)\,\colon  P\equiv Q\pmod{p}\}\,.
\end{equation}
\textit{Affine discs} are residue discs that do not contain any point at infinity.

  A rigid analytic subspace $\calU$ of the analytification of $C/k$ is called a
  \textit{wide open space} if there are closed discs $D_1,\ldots,D_n$ of radius
  $r_i<1$, each
  contained inside a residue disc, such that $\calU$ is the complement of
  $D_1\cup\ldots\cup D_n$. See~\cite[Section~2]{Besser-Syntomic-2} for more
  information on wide open spaces.

  For a rigid analytic form $\omega$ on $C/k$, we define the
  \textit{residue of $\omega$ along $D_i$} to be the residue along a
  sufficiently small 
  annulus around $D_i$ that is contained in $\calU$ (with respect to a
  suitable local parameter on the annulus). In particular, this
  residue is~0 if $\omega$ 
  has trivial residue at all points in the residue disc containing $D_i$.

  We say that a rigid analytic form on $C/k$
  is \textit{essentially of the second kind} if its residue along each
  $D_i$ is trivial.
  Coleman's integration theory extends to wide open spaces; we refer
  to~\cite[Theorem~4.1]{BBHeights} for its properties. 
  In particular, the
  notion of tiny integrals extends to integrals along annuli in $\calU$
  around a disc $D_i$ if the differential has trivial residue along $D_i$.
}
\section{Precomputations}\label{sec:precomp}
Let \sm{$p>2$ be a prime and let $f\in\Z_p[x]$ be of degree $\ge 3$ without repeated roots, so that $C\colon y^2=f(x)$ is a hyperelliptic curve  over $\Q_p$ with good reduction.
Our algorithm to compute local heights (see Algorithm~\ref{alg:hp} below) assumes that the following data is available:
\begin{itemize}
  \item A basis of $\hdr(C/\Q_p)$. 
  \item The action of Frobenius on $\hdr(C/\Q_p)$.
  \item The cup product matrix on $C$.
  \item A basis for  a subspace $W \subset \hdr(C/\Q_p)$, 
    complementary to the space of holomorphic forms and isotropic with respect to the cup product pairing.
\end{itemize}
Recall that $\omega_0,\ldots,\omega_{g-1}$ form a  basis of
the holomorphic differentials on $C$, where  we denote
$\omega_i\colonequals \frac{x^idx}{y}$ for each $i\ge 0$. 
If $\deg(f)$ is odd, then the classes $[\omega_0],\ldots,[\omega_{2g-1}]$ form a basis of 
$\hdr(C/\Q_p)$. We can then compute the action of Frobenius on this basis using
Kedlaya's algorithm \cite{Kedlaya-MW-reduction}, and the cup product matrix can be computed using local integrals, see~\cite[\S5.1]{BBHeights}. An algorithm to compute a basis for the unit root subspace $W$ (when $p$ is ordinary) is discussed in~\cite[\S6.1]{BBHeights}.

We now discuss how to compute these objects when $\deg(f)=2g+2$ is even, which we shall assume for the remainder of this section. We also assume that $f$ is monic, which suffices for the situation discussed in Section~\ref{sec:Algorithm-computations} below.
In particular, the points at infinity, denoted
$\infty_+$, $\infty_-$, are in $C(\Q_p)$. }



\subsection{Computing a basis of $\hdr(C/\Q_p)$}\label{basis}
\sm{We extend the classes of the differentials $\omega_0,\ldots,\omega_{g-1}$ to a basis of $\hdr(C/\Q_p)$ in
Lemma \ref{lm:dR-from-MW} below;} first we briefly discuss the cohomological background.
Denote the Weierstrass points on the reduction $\overline{C}$ of $C$ modulo $p$ by $W_1,\ldots,W_{2g+2}\in \overline{C}(\overline{\F_p})$. 
Consider the set
\[
V\colonequals \{W_1,\ldots,W_{2g+2},\infty_-,\infty_+\}\subset
\overline{C}(\overline{\F}_p)\,.
\]

Let $U=\Spec\F_p[x,y,1/y]/(y^2-f(x))$ be the affine curve
$\overline{C}\setminus V$. 
\sm{For each $P\in V$, we choose a disc $D_P$ of sufficiently large  radius contained in the residue disc reducing to $P$. Similar
to~\cite{BBHeights}, we consider a wide open space $\calU$ obtained
by removing the discs $D_P$ for all $P\in V$.
Then $U$ is the reduction of $\calU$.
By~\cite[Proposition 4.8]{Besser-Syntomic-2}, there is an exact sequence:
}


\begin{equation}\label{eq:Besser-Exact-Sequence}
\begin{tikzcd}
    \displaystyle  0 \arrow[r] & \hdr(C/\Q_p) \arrow[r] &
  \hdr(\calU) \arrow[r, "\Res"] & \bigoplus_{\substack{P\in
  V}}\C_p\,,
\end{tikzcd}
\end{equation} 
\sm{where for $P\in V$, the $P$-component of the map $\Res$ is the residue
  along $D_P$.

The work of Baldassarri and Chiarellotto~\cite{Baldassarri-Chiarellotto}
implies that we have
$\hdr(C/\Q_p)\simeq \hrig(\overline{C})$, where
$\hrig(\overline{C})$ is the first rigid cohomology of $\overline{C}$.
Similarly,~\cite{Baldassarri-Chiarellotto} also shows that
$\hrig(U)\simeq \hdr(\calU)$. In rigid cohomology, the exact sequence
becomes } 
\begin{equation}\label{eq:Cohomology-Exact-Sequence}
\begin{tikzcd}
\displaystyle  0 \arrow[r] & \hrig(\overline{C}) \arrow[r] &
  \hrig(U) \arrow[r, "\Res"] & \bigoplus_{\substack{P\in
  V}}\C_p\,,
\end{tikzcd}
\end{equation} 
\sm{
where for $P\in V$, the $P$-component of $\Res$ is the residue $\Res_P$ at
$P$ (see~\cite[Theorem 3.11]{Tuitman-P1-two}).}

By
  \cite[Proposition 1.10]{Berthelot-isomorphism-rigid-MW}, $\hrig(U)$ is isomorphic to  the first Monsky-Washnitzer
    cohomology group $\hmw(U)$ of $U$. \sm{For the construction of the latter see~\cite{Marius-MW-Cohomology}; it has the advantage of
    being amenable to computations.
Let $\hmw(U)^{\pm}$ be the $\pm$ component of $\hmw(U)$ with respect to the hyperelliptic involution. 
By~\cite[\S3.2]{Harrison-Even-Deg-MW}, $\{[\omega_0], \ldots,[\omega_{2g}]\}$
is a basis of $\hmw(U)^-$ and $\{ [\mu_0],\ldots, [\mu_{2g+1}]\}$ is a basis of $\hmw(U)^+$ 
where $\mu_i \colonequals \frac{x^idx}{2y^2}$.
Hence we have}
\begin{equation}\label{eq:H1MW-even-basis}
\hrig(U)\simeq\Span([\omega_0],\ldots,[\omega_{2g}])\oplus\Span([\mu_0],\ldots,[\mu_{2g+1}])\,,
\end{equation}
    \sm{where $\Span$ denotes the $\Q_p$-linear span.}


Since $\omega_i$ is holomorphic on $C(\overline{\Q}_p)\setminus \{\infty_-,\infty_+\}$ for all $i\geq 0$, and holomorphic on $C$ for $0\leq i \leq g-1$, we have
\begin{itemize}
\item $\Res(\omega_i) = (0,\ldots,0,0,0)$, for $0\leq i \leq g-1$;
\item $\Res(\omega_g) =  (0,\ldots,0,1,-1)$, as we will see in Proposition \ref{prop:residue-of-wg};
\item $\Res(\omega_i) =  (0,\ldots,0,c_i,-c_i)$ for each $g+1\leq i\leq 2g$ and some  $c_i\in \Q_p$.
\end{itemize}

On the other hand, we compute that for $1\leq j\leq 2g+2$, $0\leq i\leq
2g+1$, we have
\[
\Res_{(a_j,0)}(\mu_i)=\Res_{(a_j,0)}\left(\dfrac{x^i}{f'(x)}\dfrac{dy}{y}
\right)= \dfrac{a_j^i}{f'(a_j)}\,,
\]
\sm{where the $a_j\in \overline{\Q_p}$ are the roots of $f$ and $(a_j,0)$ reduces to $W_j$.}

\begin{lemma}\label{lm:positive-cohomology-misses-kernel}
We have $\ker(\Res)\cap \Span(\mu_0,\ldots,\mu_{2g+1}) = \{0\}$.
\end{lemma}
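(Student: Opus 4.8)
The plan is to reduce the statement to the elementary fact that a polynomial of degree at most $2g+1$ with $2g+2$ distinct roots must be zero. Suppose $\omega=\sum_{i=0}^{2g+1}b_i\mu_i$ lies in $\ker(\Res)$, with $b_i\in\Q_p$. Then, in particular, the residue of $\omega$ along the disc $D_{W_j}$ vanishes for every Weierstrass point $W_j$ of $\overline{C}$. Since $f$ has good reduction, its roots $a_1,\ldots,a_{2g+2}\in\overline{\Q_p}$ are pairwise distinct and their reductions are pairwise distinct, so the residue disc reducing to $W_j$ contains the single Weierstrass point $(a_j,0)$, at which each $\mu_i$ has at most a simple pole and no other pole inside that disc. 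Hence the residue of $\omega$ along $D_{W_j}$ coincides with $\Res_{(a_j,0)}(\omega)$, and by the residue computation recorded just before the lemma this equals $\sum_{i=0}^{2g+1}b_i\,a_j^i/f'(a_j)$.

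Next I would set $B(x)\colonequals\sum_{i=0}^{2g+1}b_i x^i\in\overline{\Q_p}[x]$. As $f$ is squarefree, $f'(a_j)\neq 0$ for each $j$, so the vanishing of all these residues gives $B(a_j)=0$ for $j=1,\ldots,2g+2$. Thus $B$ has degree at most $2g+1$ but the $2g+2$ distinct roots $a_1,\ldots,a_{2g+2}$, which forces $B=0$ and hence $b_0=\cdots=b_{2g+1}=0$. Therefore $\omega=0$, proving $\ker(\Res)\cap\Span(\mu_0,\ldots,\mu_{2g+1})=\{0\}$.

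The only point requiring a little care is the identification of the residue of $\omega$ along the disc $D_{W_j}$ with the ordinary residue $\Res_{(a_j,0)}(\omega)$; this is precisely the situation discussed before the lemma, namely that the residue along a small annulus around a disc containing a single simple pole agrees with the residue at that pole. Beyond this, the argument is linear algebra over $\overline{\Q_p}$ together with the explicit residue formula already established, so I do not anticipate a genuine obstacle.
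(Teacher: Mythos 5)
Your proof is correct and follows essentially the same route as the paper: both use the explicit residues $\Res_{(a_j,0)}(\mu_i)=a_j^i/f'(a_j)$ to translate membership in $\ker(\Res)$ into the statement that the polynomial $\sum_i b_i x^i$ of degree $\le 2g+1$ vanishes at the $2g+2$ distinct roots of $f$, forcing all coefficients to be zero. The only difference is that you spell out the identification of the residue along the disc $D_{W_j}$ with the pointwise residue $\Res_{(a_j,0)}$, which the paper leaves implicit; that is a harmless elaboration, not a different argument.
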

\begin{proof}
Assume that there are $\gamma_0,\ldots,\gamma_{2g+1}\in \Q_p$ such that
  \[\Res(\gamma_0\mu_0+\cdots+\gamma_{2g+1}\mu_{2g+1})=(0,\ldots,0,0,0)\,.\]
  It follows that for any $1\leq j\leq 2g+2$ we have
\[
\sum_{i=0}^{2g+1}\gamma_i \dfrac{a_j^i}{f'(a_j)}=0 \implies
  \sum_{i=0}^{2g+1}\gamma_i a_j^i=0\,.
\]
In other words, the polynomial $\sum_{i=0}^{2g+1}\gamma_i x^i$ has at least $2g+2$ distinct zeros $a_1,\ldots,a_{2g+2}$, hence, it is the constant zero polynomial, so $\gamma_0=\cdots=\gamma_{2g+1}=0$.
\end{proof}

Lemma~\ref{lm:positive-cohomology-misses-kernel} and the discussion
preceding it imply: 
\begin{corollary}\label{cor:exact-sequence-dr-cohomology-computation}
There is an exact sequence
\begin{equation*}\label{eq:cohomology-dR-MWminus-exact-sequence}
\begin{tikzcd}
\displaystyle    0 \arrow[r] & \hdr(C/\Q_p) \arrow[r] &
  \Span(\omega_0,\ldots,\omega_{2g}) \arrow[r, "\Res"] &
  \mathbb{C}_p\oplus \mathbb{C}_p\,,
\end{tikzcd}
\end{equation*} 
where $\Res(\omega)=(\Res_{\infty_-}\omega, \Res_{\infty_+}\omega)$.
\end{corollary}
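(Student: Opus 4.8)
The corollary is essentially an exactness statement pieced together from the two facts that precede it: the rigid-cohomology exact sequence \eqref{eq:Cohomology-Exact-Sequence}, which after the identification $\hrig(U)\simeq\hmw(U)$ and the splitting \eqref{eq:H1MW-even-basis} reads
\[
0 \longrightarrow \hdr(C/\Q_p) \longrightarrow \Span(\omega_0,\ldots,\omega_{2g}) \oplus \Span(\mu_0,\ldots,\mu_{2g+1}) \xrightarrow{\ \Res\ } \bigoplus_{P\in V}\C_p\,,
\]
together with Lemma~\ref{lm:positive-cohomology-misses-kernel}, which says the $\mu$-part of this direct sum meets $\ker(\Res)$ only in $0$. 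So the plan is: first observe that $\ker(\Res)$ inside $\hmw(U)$ is exactly the image of $\hdr(C/\Q_p)$. I would then show that projection onto the first summand, $\Span(\omega_0,\ldots,\omega_{2g})$, restricts to an \emph{injection} on $\hdr(C/\Q_p)$: if $\omega + \mu \in \ker(\Res)$ with $\omega$ in the $-$ part and $\mu$ in the $+$ part, then since $\Res$ respects the eigenspace decomposition of the hyperelliptic involution (the residues at $\infty_\pm$ land in the $-$ part and the residues at Weierstrass points in the $+$ part — or one can simply note that $\omega$ and $\mu$ lie in different eigenspaces and $\Res$ is equivariant), we get $\Res(\omega) = 0$ in the $\infty$-components and $\Res(\mu) = 0$ in the Weierstrass components; Lemma~\ref{lm:positive-cohomology-misses-kernel} then forces $\mu = 0$. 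Hence $\hdr(C/\Q_p) \hookrightarrow \Span(\omega_0,\ldots,\omega_{2g})$, and its image lies in the kernel of the two-component residue map $\omega \mapsto (\Res_{\infty_-}\omega, \Res_{\infty_+}\omega)$ because the only points of $V$ at which any $\omega_i$ can have a pole are $\infty_\pm$ (each $\omega_i$ is holomorphic away from infinity on the affine curve, as recorded in the bulleted list before the lemma), so vanishing of the full $\bigoplus_{P\in V}$-residue is equivalent to vanishing of the $\infty_\pm$-residue alone.

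**Key steps, in order.** (1) Combine \eqref{eq:Cohomology-Exact-Sequence}, the isomorphism $\hrig(U)\simeq\hmw(U)$ from \cite[Proposition 1.10]{Berthelot-isomorphism-rigid-MW}, and the basis decomposition \eqref{eq:H1MW-even-basis} to present $\Res$ as a map out of $\Span(\omega_0,\ldots,\omega_{2g})\oplus\Span(\mu_0,\ldots,\mu_{2g+1})$ with kernel $\hdr(C/\Q_p)$. (2) Note $\Res$ is equivariant for the hyperelliptic involution, and that the residue at $\infty_\pm$ is supported on the $-$ eigenspace while the residue at each Weierstrass point is supported on the $+$ eigenspace; deduce that for $\omega \in \hdr(C/\Q_p)$ written as $\omega^- + \omega^+$ with $\omega^-\in\Span(\omega_i)$, $\omega^+\in\Span(\mu_i)$, we have $\Res(\omega^+)=0$, hence $\omega^+ = 0$ by Lemma~\ref{lm:positive-cohomology-misses-kernel}. (3) Conclude the composite $\hdr(C/\Q_p)\to\Span(\omega_0,\ldots,\omega_{2g})$ is injective, and that a class $\omega$ in the target lies in the image iff it lies in $\ker(\Res)$ iff $\Res_P(\omega) = 0$ for all $P \in V$; since $\Res_P(\omega_i) = 0$ for every $P\in V\setminus\{\infty_-,\infty_+\}$ and every $i$ (the only poles of the $\omega_i$ on $U$-compactified-by-$V$ are at infinity), this last condition reduces to $\Res_{\infty_-}(\omega) = \Res_{\infty_+}(\omega) = 0$, giving exactness of the displayed three-term sequence.

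**Main obstacle.** The genuinely delicate point is the bookkeeping in step (2)–(3): making sure the identification of $\hdr(C/\Q_p)$ as a subspace of $\Span(\omega_0,\ldots,\omega_{2g})$ is compatible with the residue maps appearing in \eqref{eq:Cohomology-Exact-Sequence} versus the two-component map in the statement, i.e. that dropping the Weierstrass-point components of $\Res$ is harmless because the surviving representatives $\sum c_i\omega_i$ have no residues there. This is forced by the explicit shape of the $\omega_i$ (holomorphic on the affine part), so it is really just a matter of citing the bulleted residue computations preceding Lemma~\ref{lm:positive-cohomology-misses-kernel}; no further input is needed, and I would not expect any serious difficulty beyond stating it carefully.
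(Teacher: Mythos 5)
Your proposal is essentially correct and reconstructs the argument the paper leaves implicit (the paper's own justification is just "Lemma~\ref{lm:positive-cohomology-misses-kernel} and the discussion preceding it imply"). One imprecision is worth flagging: the parenthetical claim that "the residues at $\infty_\pm$ land in the $-$ part" is not accurate — even forms can (and do) have residues at $\infty_\pm$; for example $\mu_{2g+1}$ has nonzero residue $-\tfrac12$ at each of $\infty_\pm$. What is true, and suffices, is the alternative you offer: $\Res$ is $\iota$-equivariant, so $\ker(\Res)$ is $\iota$-stable and the $\pm$-decomposition of a kernel element lands in $\ker(\Res)$ componentwise, giving $\Res(\mu)=0$ in full (not just at the Weierstrass points); combined with the fact that the Weierstrass components of $\Res(\omega)$ vanish because odd forms have zero residue at $\iota$-fixed points, this is exactly what lets you invoke the lemma as stated. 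Alternatively you could note that the lemma's proof only uses the Weierstrass components, so vanishing there already forces $\mu=0$. Either repair makes your Step (2) airtight; the rest of the argument is correct and matches the paper.
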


We now use Corollary \ref{cor:exact-sequence-dr-cohomology-computation}  to
construct an explicit basis of $\hdr(C/\Q_p)$. For $0\leq i \leq g-1$, let
$\eta_i\colonequals \omega_i$ and for $g\leq i \leq 2g-1$, let $\eta_i
\colonequals\omega_{i+1} -
c_i\omega_g$, with $c_i$ as in (iii). 

\begin{lemma}\label{lm:dR-from-MW}
The classes of the differentials $\eta_0,\ldots,\eta_{2g-1}$ form a basis of $\hdr(C/\Q_p)$.
\end{lemma}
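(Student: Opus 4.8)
The plan is to show that the $2g$ classes $[\eta_0],\ldots,[\eta_{2g-1}]$ are linearly independent in $\hdr(C/\Q_p)$; since $\dim_{\Q_p}\hdr(C/\Q_p)=2g$, linear independence alone suffices. First I would observe that each $\eta_i$ actually lies in $\Span(\omega_0,\ldots,\omega_{2g})$: indeed $\eta_i=\omega_i$ for $0\le i\le g-1$, and $\eta_i=\omega_{i+1}-c_i\omega_g$ for $g\le i\le 2g-1$, where the index $i+1$ runs over $g+1,\ldots,2g$. So all $\eta_i$ land inside the middle term of the exact sequence in Corollary~\ref{cor:exact-sequence-dr-cohomology-computation}, and by that corollary it is enough to check that each $[\eta_i]$ lies in $\ker(\Res)=\hdr(C/\Q_p)$ and that the resulting classes are independent there.

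Next I would verify the residue condition. Using the three bullet points computed just before Lemma~\ref{lm:positive-cohomology-misses-kernel} (namely $\Res(\omega_i)=0$ for $i\le g-1$, $\Res(\omega_g)=(1,-1)$, and $\Res(\omega_{i+1})=(c_i,-c_i)$ for $g\le i\le 2g-1$ by the definition of the $c_i$ in item (iii)), one gets $\Res(\eta_i)=0$ for $0\le i\le g-1$ trivially, and for $g\le i\le 2g-1$,
\[
\Res(\eta_i)=\Res(\omega_{i+1})-c_i\Res(\omega_g)=(c_i,-c_i)-c_i(1,-1)=(0,0)\,.
\]
Hence every $[\eta_i]$ lies in the image of $\hdr(C/\Q_p)\hookrightarrow\Span(\omega_0,\ldots,\omega_{2g})$, i.e.\ defines a class in $\hdr(C/\Q_p)$.

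It remains to prove linear independence of $[\eta_0],\ldots,[\eta_{2g-1}]$ in $\hdr(C/\Q_p)$, equivalently in $\Span(\omega_0,\ldots,\omega_{2g})\subset\hmw(U)^-$. Suppose $\sum_{i=0}^{2g-1}\lambda_i[\eta_i]=0$ in $\hmw(U)^-$. Expanding, the coefficient of $[\omega_j]$ for $1\le j\le 2g$, $j\ne g$, is $\lambda_{j-1}$ (when $j\le g-1$, reading $\lambda_{j}$) — more precisely the differentials $\eta_0,\ldots,\eta_{g-1},\eta_g,\ldots,\eta_{2g-1}$ are, as elements of $\Span(\omega_0,\ldots,\omega_{2g})$, obtained from $\omega_0,\ldots,\omega_{g-1},\omega_{g+1},\ldots,\omega_{2g}$ by a unipotent (triangular, unit-diagonal) change of basis that only modifies each $\omega_{i+1}$ by a multiple of $\omega_g$. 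Since $\{[\omega_0],\ldots,[\omega_{2g}]\}$ is a basis of $\hmw(U)^-$ by~\cite[\S3.2]{Harrison-Even-Deg-MW} (as recalled around~\eqref{eq:H1MW-even-basis}), the relation $\sum\lambda_i[\eta_i]=0$ forces, by comparing coefficients of $[\omega_j]$ for $j\notin\{g\}$, that all $\lambda_i=0$. Thus the $[\eta_i]$ are independent; being $2g$ in number inside the $2g$-dimensional space $\hdr(C/\Q_p)$, they form a basis. The main (very minor) obstacle is purely bookkeeping: keeping the index shift $i\mapsto i+1$ straight so that the change of basis from $\{\omega_0,\dots,\omega_{g-1},\omega_{g+1},\dots,\omega_{2g}\}$ to $\{\eta_0,\dots,\eta_{2g-1}\}$ is transparently invertible; there is no real analytic or cohomological difficulty once Corollary~\ref{cor:exact-sequence-dr-cohomology-computation} is in hand.
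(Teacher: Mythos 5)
Your proof is correct and follows essentially the same route as the paper's: showing $\Res(\eta_i)=0$ so the classes land in $\ker(\Res)\cong\hdr(C/\Q_p)$, noting independence via the basis $[\omega_0],\ldots,[\omega_{2g}]$ of $\hmw(U)^-$ from~\eqref{eq:H1MW-even-basis}, and concluding by a dimension count. The only difference is that you spell out the unipotent change-of-basis bookkeeping that the paper leaves implicit in its one-sentence appeal to~\eqref{eq:H1MW-even-basis}.
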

\begin{proof}
From the above, it follows that $\Span(\eta_0,\ldots,\eta_{2g-1})\subset \ker(\Res)$. Since the classes of the $\eta_i$ are independent in $\hdr(C/\Q_p)$ by \eqref{eq:H1MW-even-basis}, the claim follows by comparing dimensions. 
\end{proof}

\subsection{The action of Frobenius on $\hdr(C/\Q_p)$}\label{frob}
An algorithm for the computation of the matrix $\Phi=(f_{i,j})\in
\Q_p^{(2g+1)\times(2g+1)}$ of Frobenius acting on $\Span([\omega_0], \ldots,[\omega_{2g}])
\subset \hmw(U)^-$
is given in~\cite{Harrison-Even-Deg-MW}. \sm{More precisely, this is a
lift of Frobenius to a rigid analytic morphism that maps the wide open space
$\calU$ to a space with the same cohomology.}
Hence the matrix of Frobenius on $\hdr(C/\Q_p)$ with respect to the basis $[\eta_0],\ldots,[\eta_{2g-1}]$, which we denote by $\Frob$, is 
$$\Frob=\begin{pmatrix} 
f_{0,0} & \ldots & f_{0,g-1} &  f_{0,g+1}-c_{g+1}f_{0,g} & \ldots & f_{0,2g}-c_{2g}f_{0,g} \\
\vdots & \ddots & \vdots & \vdots & \ddots & \vdots\\
f_{g-1,0} & \ldots & f_{g-1,g-1}  & f_{g-1,g+1}-c_{g+1}f_{g-1,g} & \ldots & f_{g-1,2g}-c_{2g}f_{g-1,g}\\
f_{g+1,0} & \ldots & f_{g+1,g-1}  & f_{g+1,g+1}-c_{g+1}f_{g+1,g} & \ldots & f_{g+1,2g}-c_{2g}f_{g+1,g}\\
\vdots & \ddots & \vdots & \vdots  & \ddots & \vdots\\
f_{2g,0} & \ldots & f_{2g,g-1} & f_{2g,g+1}-c_{g+1}f_{2g,g} & \ldots & f_{2g,2g}-c_{2g}f_{2g,g}
\end{pmatrix}.$$

\subsection{The cup product matrix}\label{cup} 
Since all $\eta_i$ are holomorphic on $U$, 
the cup product matrix on $C$, which we denote by $M=(m_{ij})_{i,j}$, is given by 
\[
m_{ij}=\Res_{\infty_+}\left(\eta_j\int\eta_i\right)+\Res_{\infty_-}\left(\eta_j\int\eta_i\right) =2\Res_{\infty_+}\left(\eta_j\int\eta_i\right)\,.
\]
The final equality holds because if $(s_x,s_y)$ are local coordinates of one point at infinity, for example induced by $s_x=\frac{1}{x}$, then $(s_x,-s_y)$ are local coordinates of the other one, and $\eta_j\int\eta_i$ is an even function in $y$. 

\subsection{Isotropic complementary subspaces}\label{subsec:subspace}
\sm{Recall that part of the data required to construct the local height $h_p$ is a 
choice of a subspace $W \subset \hdr(C/\Q_p)$ 
    complementary to the space of holomorphic forms. In applications, we
    often find it convenient to work with symmetric height pairings, so in view of Proposition~\ref{prop:height-properties}, we want $W$ to be isotropic with respect to the cup product pairing. Suppose we have computed the $\eta_i$, the matrix $\Phi$ and the cup product matrix $M$.}

One possible choice of isotropic complementary subspace $W$ is a subspace 
whose basis, together with $[\eta_0],\ldots,[\eta_{g-1}]$,
forms a symplectic basis. \sm{Such a basis can be computed easily from $\eta_0,\ldots,\eta_{g-1}$ and the matrix $M$ using linear algebra.}

Alternatively, if $p$ is ordinary, we can compute the unit root
subspace $W$ \sm{(see Remark~\ref{rmk:W-Unit-Root})} to any desired precision using the
following result. We denote by $\phi$ the lift of Frobenius to (a certain wide open
subspace of) $C$ constructed by
Kedlaya~\cite{Kedlaya-MW-reduction} and
Harrison~\cite{Harrison-Even-Deg-MW}, and we denote by $\phi^*$ the action of Frobenius 
on functions and differentials. 
\begin{proposition}\label{prop:unit-root-subspace}{\cite[Proposition 6.1]{BBHeights}}
\sg{When $p$ is a prime of good ordinary reduction, the elements } ${\phi^*}^n(\eta_g), \ldots , {\phi^*}^n(\eta_{2g-1})$
form a basis of the unit root subspace modulo $p^n$.
\end{proposition}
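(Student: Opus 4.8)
The plan is to exploit the defining property of the unit root subspace: it is the unique Frobenius-stable complement to the holomorphic differentials, and Frobenius acts on it with eigenvalues that are $p$-adic units, whereas on the quotient $\hdr(C/\Q_p)/W$ (equivalently, on the holomorphic part) Frobenius acts with eigenvalues of positive valuation. First I would record that, by Weil's Riemann hypothesis for $\overline{C}$ together with the Newton polygon structure in the ordinary case, the matrix $\Frob$ of $\phi^*$ on $\hdr(C/\Q_p)$, written in a basis adapted to the filtration $\Span([\eta_0],\ldots,[\eta_{g-1}]) \subset \hdr(C/\Q_p)$, has a block form in which the block acting on the holomorphic subspace is $\equiv 0 \pmod p$ (its eigenvalues have positive valuation), while the induced action on the quotient is invertible mod $p$. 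This is exactly the statement that the Hodge and Newton polygons coincide, i.e. ordinarity; I would cite this as standard (it is implicit in Remark~\ref{rmk:W-Unit-Root} and~\cite[Theorem~3.1]{Iov00}).

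Next, the key computation: take any lift $v \in \hdr(C/\Q_p)$ of a basis vector of the quotient $\hdr(C/\Q_p)/\Span([\eta_0],\ldots,[\eta_{g-1}])$ — concretely $v = [\eta_i]$ for $g \le i \le 2g-1$ — and consider the sequence ${\phi^*}^n(v)$. Decompose $v = w + h$ with $w \in W$ (the true unit root subspace) and $h$ holomorphic. Since $W$ is $\phi^*$-stable, ${\phi^*}^n(v) = {\phi^*}^n(w) + {\phi^*}^n(h)$. On the holomorphic part $\phi^*$ is topologically nilpotent in the ordinary case — more precisely, each application multiplies valuations up by at least $1$ in the appropriate sense, so ${\phi^*}^n(h) \equiv 0 \pmod{p^n}$. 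Meanwhile ${\phi^*}^n(w) \in W$ for all $n$. Hence ${\phi^*}^n(v) \equiv {\phi^*}^n(w) \pmod{p^n}$, so ${\phi^*}^n(v) \bmod p^n$ lies in $W \bmod p^n$. Applying this to $v = [\eta_g], \ldots, [\eta_{2g-1}]$ gives $g$ elements of $\hdr(C/\Q_p)$ congruent mod $p^n$ to elements of $W$.

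Finally, I would check independence modulo $p^n$: since $\phi^*$ restricted to $W$ is invertible with unit determinant, ${\phi^*}^n$ maps the images of $[\eta_g],\ldots,[\eta_{2g-1}]$ in $\hdr(C/\Q_p)/\Span([\eta_0],\ldots,[\eta_{g-1}])$ to a basis of that $g$-dimensional quotient (the quotient map identifies $W$ with the quotient, and $\phi^*$ acts invertibly mod $p$ there); lifting back, ${\phi^*}^n(\eta_g),\ldots,{\phi^*}^n(\eta_{2g-1})$ together with $\eta_0,\ldots,\eta_{g-1}$ span $\hdr(C/\Q_p)$ modulo $p^n$, hence the first $g$ of them are independent modulo $p^n$ and span $W$ modulo $p^n$ by a dimension count. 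The main obstacle is making precise the rate of convergence — i.e. that $n$ applications of $\phi^*$ kill the holomorphic component to precision exactly $p^n$ rather than some other power — which requires being careful about how $\phi^*$ interacts with the integral structure on $\hdr$ and the valuations of the eigenvalues on the non-unit-root part; in the ordinary case these all have valuation $\ge 1$ after normalizing, so $n$-fold iteration contributes $p^n$, but one should phrase this via the matrix $\Phi$ of~\S\ref{frob} reduced mod $p$ and an induction on $n$. This is precisely the argument of~\cite[Proposition~6.1]{BBHeights}, to which I would ultimately defer for the odd-degree case and adapt verbatim here using the basis $[\eta_0],\ldots,[\eta_{2g-1}]$ and the matrix $\Frob$ from~\S\ref{frob}.
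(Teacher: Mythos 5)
The paper does not actually prove this proposition; it cites it as \cite[Proposition~6.1]{BBHeights} and relies on the fact that the basis $[\eta_0],\ldots,[\eta_{2g-1}]$ of $\hdr(C/\Q_p)$ constructed in Lemma~\ref{lm:dR-from-MW} plays the same role as the standard basis in the odd degree case, so the argument transports verbatim. You correctly identify this and your overall plan is the right one, but there is a concrete error in the middle step that a referee would flag.

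You write $v = w + h$ with $w \in W$, $h \in H^{1,0}$, and then claim that ``each application multiplies valuations up by at least $1$'' so that ${\phi^*}^n(h) \equiv 0 \pmod{p^n}$. This iteration is not valid, because $H^{1,0}$ is \emph{not} $\phi^*$-stable. The correct integral statement coming from the $F$-crystal structure is $\phi^*(H^{1,0}) \subset p\,H^1$, but $p\,H^1$ is not contained in $p\,H^{1,0}$, so you cannot apply the bound again to gain another factor of $p$. Concretely, writing $\Frob$ in block form with respect to $W \oplus H^{1,0}$ as $\bigl(\begin{smallmatrix} A & B \\ 0 & D\end{smallmatrix}\bigr)$ (the lower-left block vanishes because $W$ is $\phi^*$-stable; $B$ and $D$ are divisible by $p$), one has
$$
\Frob^n = \begin{pmatrix} A^n & \sum_{k=0}^{n-1} A^k B D^{n-1-k} \\ 0 & D^n \end{pmatrix},
$$
and the off-diagonal block is only divisible by $p$, not by $p^n$, since the $k = n-1$ term is $A^{n-1}B$. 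So ${\phi^*}^n(h) \not\equiv 0 \pmod{p^n}$ in general. What \emph{is} true, and what the argument actually needs, is that the $H^{1,0}$-component of ${\phi^*}^n(h)$ (namely $D^n h$) is $\equiv 0 \pmod{p^n}$, while the $W$-component of ${\phi^*}^n(h)$ is harmless because it lies in $W$. In other words, the correct intermediate claim is ${\phi^*}^n(H^{1,0}) \subset W + p^n H^1$, established by induction on $n$; your stated intermediate claim ${\phi^*}^n(h) \in p^n H^1$ is false, and the phrase ``topologically nilpotent on the holomorphic part'' is misleading because there is no well-defined induced action of $\phi^*$ on $H^{1,0}$.

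Your conclusion and the linear independence check in the final paragraph are correct, and the appeal to \cite[Proposition~6.1]{BBHeights} is appropriate since that is exactly what the paper does. Two smaller imprecisions worth tidying: invoking ``Weil's Riemann hypothesis'' is a red herring here (the block structure is a consequence of the Newton--Hodge decomposition in the ordinary case, not of the archimedean bounds on Frobenius eigenvalues), and ``the induced action on the quotient is invertible mod $p$'' should be phrased as the action of $\phi^*$ on $W$ itself being invertible mod $p$, since $\hdr(C/\Q_p)/H^{1,0}$ does not carry a natural $\phi^*$-action.
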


\begin{remark}\label{unit-root}
For some applications, for instance, to quadratic Chabauty, we can take $W$ to be any
  complementary subspace isotropic with respect to the cup product.
 In the good ordinary case, the
  global Coleman--Gross height with respect to the unit root subspace is of
  special importance due to its appearance in the 
  $p$-adic Birch and Swinnerton--Dyer conjecture formulated in~\cite{BMS16}.
  Hence we need to
  compute $p$-adic heights with respect to the unit
  root subspace to gather empirical evidence for this conjecture. See
  Section~\ref{S:pbsd}.
\end{remark}
\section{Computing local heights above $p$ on even degree hyperelliptic curves}\label{sec:Algorithm-computations}

As in the previous section, we let 
$p>2$ be prime and we let $C\colon y^2=f(x)$\sg{, where $f\in\Z_p[x]$} be a hyperelliptic curve with good reduction over $\Q_p$. In \cite{BBHeights},  Balakrishnan and Besser
introduce an algorithm to compute the local height $h_p$ on $C$ with respect to
the unit root subspace when $\deg(f)$ is odd. Here we partially follow
their strategy, but modify the key steps to significantly simplify and
speed up the algorithm 
when $\deg(f)$ is even. 

\sg{Suppose that there exists a point $P_0\in C(\Q_p)$ such that $\ord_p(y(P_0))=0$. In this case, we can apply a transformation (see Equation~\eqref{eq:change-of-variables} in \sg{Section~\ref{sec:affine}}) that does not
change the local height by Corollary~\ref{cor:height-independence} and
results in a model $C\colon y^2=f(x)$ where $f\in\Z_p[x]$ is monic of even degree. }
We will assume that we are in this situation from now on.
\sm{
As in the previous section, we fix a complementary subspace
$W\subset \hdr(C/\Q_p)$, isotropic with respect to the cup product pairing,
and a continuous homomorphism $\chi\colon
\Q_p^{\ast}\longrightarrow\mathbb{Q}_p$, obtained from a continuous idèle class character.
In this situation, $\chi=\log_p$ is the Iwasawa branch of the logarithm, determined by $\log_p(p)=0$, and the linear map $t_\p$ in~\eqref{trace-on-O_K}  is a scalar multiple of the identity map. In the following, we assume for ease of notation that $t_\p$ is the identity map, and we call $\chi$ the (local component of) the \textit{cyclotomic id\`ele class character}.}


\subsection{Reduction using bi-additivity}\label{subsec:reduction-bi-additivity}

Any divisors $D_1, D_2\in \Div^0(C)$ with disjoint support can be written as 
$$D_1=\sum_{i=1}^n (P_i-Q_i), \hspace{5mm} D_2=\sum_{j=1}^m (R_j-S_j)\,$$
for pairwise distinct points $P_i,Q_i,R_j,S_j\in C(\overline{\Q}_p)$. For
simplicity (and in our implementation), we will assume
that all these points are already defined over $\Q_p$, but the results below also hold in greater generality.
Using the bi-additivity of the local height pairing, we can reduce the computation of
$h_p(D_1,D_2)$ (with respect to $W$ and $\chi$) to the computation of $h_p(P_i-Q_i,R_j-S_j)$ for all $i,j$. Hence, to
compute $p$-adic heights, it suffices to compute $h_p(P-Q,R-S)$ for distinct points $P,Q,R,S\in C(\Q_p)$. 
\sm{In Algorithm~\ref{alg:hp}, we introduce a method to compute
$h_p(P-Q,R-S)$ that essentially follows the strategy
applied by Balakrishnan and Besser for odd degree.}


\subsection{Conditions on points}\label{subsec:conditions-points} 


Let $\iota\colon C\rightarrow C$ denote the hyperelliptic involution. 
\sm{
Recall that for $P\in C(\Q_p)$ we denote by $D(P)$ the residue disc containing it.
\begin{definition}\label{def:conds} We say that 
   distinct points $P_1,P_2,P_3,P_4\in C(\Q_p)$ satisfy \textit{Condition~1} if
  $$\{D(P_1), D(P_2), D(\iota(P_1)), D(\iota(P_2))\}\cap
  \{D(P_3),D(P_4)\} = \emptyset.$$
  We say that they satisfy \textit{Condition~1'} if 
  $$\{\iota(P_1),\iota(P_2)\}\cap \{P_3,P_4\} = \emptyset.$$ 
\end{definition}

Note that Condition~1 is strictly stronger than Condition~1'.
Our strategy for computing $h_p(P-Q, R-S)$, which imitates the strategy from \cite{BBHeights}, relies on the decomposition of divisors of degree zero into the symmetric and antisymmetric part with respect to $\iota$, see for instance Lemma \ref{L:symm-antisymm}. 
This strategy requires that Condition~1' is satisfied for the points $P,Q,R,S$.


In the following, we assume that Condition~1 is satisfied for the points $P,Q,R,S$ for practical purposes; concretely, for the computation of Step~\eqref{ints} of Algorithm~\ref{alg:hp}. If Condition~1 were not satisfied, then at least one of the endpoints of the resulting Coleman integral in Section~\ref{sec:Even-two-infinities} would belong to a residue disc at infinity. Since
Balakrishnan's implementation \cite{Jen-Even-Degree-CI} of even degree Coleman integration in {\tt SageMath} assumes that this is not the case, we would not be able to compute this integral. Similarly, in Section~\ref{sec:affine}, we use a change of variables to compute a certain Coleman integral, which would lead to the same issue. We explain in \cref{subsec:weakening} how to circumvent this issue in theory, thus weakening Condition~1 to Condition~1', but we have not attempted to implement this.}


In fact, Balakrishnan's implementation of the Coleman integral 
   $\int_{R}^{S} \omega$ assumes that
    $\omega$ is a differential that has no poles at $R$ and $S$ and
    $D(R)= D(S)$ or the following two conditions are satisfied:
\begin{itemize}
\item[(a)] neither $D(R)$ nor $D(S)$  is a disc at infinity and
\item[(b)] if $D(R)$  is a  Weierstrass disc, then $R$ is a  Weierstrass point
  (and the same for $D(S)$).
\end{itemize} 
   So $R$ and $S$ need to satisfy the following Condition~2 if we want to carry out Step~\eqref{ints} of Algorithm \ref{alg:hp} directly. 

\sg{

\begin{definition}
  We say that two distinct affine points $P_1, P_2\in C(\Q_p)$ satisfy \textit{Condition~2} if they are in the same residue disc or if \[\ord_p(y(P_1)),\ord_p(y(P_2))\in\{0,\infty\}.\]
\end{definition}

In fact, Step~\eqref{ints} of Algorithm \ref{alg:hp} assumes that $P$ and $Q$ also satisfy Condition~2; see Remark~\ref{stepivcond2}.
However, it turns out that it is possible to remove Condition~2 while keeping Condition~1. We discuss how to do this in~\S\ref{subsec:weierstrass}.
Therefore, our implementation in {\tt SageMath} does not assume Condition~2. Nevertheless, since we believe that it simplifies the exposition, we will assume it for both $P,Q$ and for $R,S$.

For convenience, we summarize our running assumptions here:
\begin{ass}\label{assumptions}
\hfill
\begin{enumerate}
\item $P,Q,R,S$ satisfy Condition~1.
\item $P,Q$ satisfy Condition~2.
\item $R,S$ satisfy Condition~2.
\end{enumerate}
\end{ass}
}


\subsection{The algorithm}
\sm{We now present our algorithm to compute the local height $h_p$. We first state it, and then we discuss the various steps in Sections~\ref{sec:Even-two-infinities} and~\ref{sec:affine}.}

\begin{alg} \label{alg:hp}  {\bf (Computation of the local height)}
\hfill

\sm{ {\bf Input:} 
\begin{itemize}
    \item A hyperelliptic curve $C\colon y^2=f(x)$ over $\Q_p$ of good reduction. 
    \item Four distinct points $P,Q,R,S\in C(\Q_p)$ satisfying Assumption~\ref{assumptions}.
    \item Differentials $\eta_i$ as in~\S\ref{basis}.
    \item The matrix of Frobenius $\Phi$ as in~\S\ref{frob}.
    \item The cup product matrix $M$ as in~\S\ref{cup}.
    \item A basis $\kappa_0,\ldots,\kappa_{g-1}$ for an isotropic complementary subspace $W$ as in~\S\ref{subsec:subspace}.
\end{itemize}
{\bf Output:} The local height $h_p(P-Q,R-S)$ with respect to $W$ and the local component of the cyclotomic id\`ele class character.}
\begin{enumerate}[(i)]
  \item\label{om'} Find some differential $\omega'$ such that $\Res(\omega')=P-Q$.
  \item\label{psiom'} Compute $\psi(\omega')$.
\item\label{coeffs}  Compute the unique coefficients $u_0,\ldots,u_{g-1}\in
  \Q_p$ 
    such that
   \sg{$\omega\colonequals \omega' -\sum^{g-1}_{i=0}u_i\eta_i$} satisfies $\Res(\omega)=P-Q$ and $\psi(\omega)\in W$.
\item\label{ints}  Compute the Coleman integrals $\int_S ^R \omega'$ and \sg{$\int_S ^R \eta_i$} for
  $i=0,\ldots,g-1$ and return $h_p(P-Q,R-S) =\int^R_S\omega$. 
\end{enumerate}
\end{alg}

\sg{{\it Step \eqref{coeffs}} only requires linear algebra, and it is the same for any distinct points $P,Q,R,S\in C(\Q_p)$ satisfying Assumption~\ref{assumptions}.
\begin{lemma}\label{lm:represent-psi-in-mixed-basis}
Let $\omega'$ be a differential of the third kind such that $\Res(\omega')=P-Q$. Using a base change formula, we express
\[
\psi(\omega')=u_0\eta_0+\cdots+u_{g-1}\eta_{g-1}+u_g\kappa_0+\cdots + u_{2g-1}\kappa_{g-1}
\]
and set
\[
\omega\colonequals\omega'-(u_0\eta_0+\cdots+u_{g-1}\eta_{g-1})\,.
\]
 Then $\Res(\omega) = P-Q$ and $\psi(\omega)\in W$.
\end{lemma}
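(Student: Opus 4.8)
The plan is to verify the two claimed properties directly from the defining properties of $\psi$ and the construction of $\omega$. The key point is that $\psi$ is linear on all meromorphic differentials, by the extension of $\psi$ described just before Proposition~\ref{prop:Unique-differential-3rd-kind}, and that the $\eta_i$ are holomorphic, so $\psi(\eta_i) = [\eta_i]$ is simply the de Rham class.

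First I would check the residue condition. Since $\omega = \omega' - \sum_{i=0}^{g-1} u_i\eta_i$ and each $\eta_i$ ($0 \le i \le g-1$) equals $\omega_i$, which is a holomorphic differential on $C$, we have $\Res(\eta_i) = 0$. Because $\Res$ is additive (it is a homomorphism $T(k) \to \Div^0(C)$, extended to meromorphic differentials with poles at worst of third-kind type plus second-kind parts, all of which have zero residue contribution from the second-kind piece), we get $\Res(\omega) = \Res(\omega') - \sum_{i=0}^{g-1} u_i\Res(\eta_i) = \Res(\omega') = P - Q$. Note $\omega$ is again a differential of the third kind, since subtracting holomorphic differentials does not introduce new poles or change residues.

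Next I would check that $\psi(\omega) \in W$. Using linearity of the extended $\psi$,
\[
\psi(\omega) = \psi(\omega') - \sum_{i=0}^{g-1} u_i\psi(\eta_i) = \psi(\omega') - \sum_{i=0}^{g-1} u_i[\eta_i],
\]
where we used $\psi(\eta_i) = [\eta_i]$ because $\eta_i$ is holomorphic, hence of the second kind, and $\psi$ sends a differential of the second kind to its class in $\hdr(C/\Q_p)$. By the base change expression $\psi(\omega') = \sum_{i=0}^{g-1} u_i[\eta_i] + \sum_{i=0}^{g-1} u_{g+i}[\kappa_i]$, subtracting the holomorphic part leaves $\psi(\omega) = \sum_{i=0}^{g-1} u_{g+i}[\kappa_i]$, which lies in $W = \Span([\kappa_0],\ldots,[\kappa_{g-1}])$ by our choice of the $\kappa_i$ in \S\ref{subsec:subspace}. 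This proves the claim.

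There is essentially no hard part here: the statement is a bookkeeping consequence of linearity of $\psi$ and the fact that $[\eta_0],\ldots,[\eta_{g-1}],[\kappa_0],\ldots,[\kappa_{g-1}]$ is a basis of $\hdr(C/\Q_p)$ adapted to the decomposition into holomorphic forms and $W$ (so that the base change in the lemma statement is well defined and the coefficients $u_i$ are unique). The only point requiring a moment's care is confirming that $\psi$ is genuinely defined and linear on $\omega'$ and on the $\eta_i$ simultaneously, which is exactly the content of the extension of $\psi$ to all meromorphic differentials recalled before Proposition~\ref{prop:Unique-differential-3rd-kind}; once that is in hand, both assertions follow immediately. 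Uniqueness of the resulting $\omega$ with these two properties is then Proposition~\ref{prop:Unique-differential-3rd-kind}, so $\omega = \omega_{P-Q}$.
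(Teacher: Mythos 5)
Your proof is correct and is exactly the bookkeeping argument the paper has in mind: the authors state the lemma without proof, remarking only that Step~(iii) of Algorithm~\ref{alg:hp} ``only requires linear algebra,'' and your verification via linearity of the extended $\psi$, the facts that $\Res(\eta_i)=0$ and $\psi(\eta_i)=[\eta_i]$ for holomorphic $\eta_i$, and the adapted basis $\{[\eta_0],\ldots,[\eta_{g-1}],[\kappa_0],\ldots,[\kappa_{g-1}]\}$ supplies precisely the missing details. The concluding observation that the resulting $\omega$ coincides with $\omega_{P-Q}$ of Proposition~\ref{prop:Unique-differential-3rd-kind} is a nice sanity check.
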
}

We will distinguish two cases: first, we assume
in Section~\ref{sec:Even-two-infinities} that 
one of the divisors is $\infty_- - \infty_+$. This case has no analogue in
odd degree, and we found no simple way to adapt the strategy employed
in~\cite{BBHeights} to this case. Instead, we develop a new approach that
turns out to be faster; the main task is to compute the matrix of Frobenius
on Monsky--Washnitzer cohomology,
which we need anyway for Coleman integration. In fact, 
this case is particularly interesting for two reasons. Namely,
the heights in our new quadratic Chabauty
algorithm for integral points (see~\cite{LinQC}) are precisely of this
form. Moreover, we can reduce the case where all four points are 
affine to it; see Section~\ref{sec:affine}, especially Remark~\ref{R:aff_inf}.

We will not consider the case when one divisor contains one point at infinity
in its support and the other divisor contains the other point in its support, see 
\sg{Assumption~\ref{assumptions}} . When there is only one point at infinity among
these points, we can reduce to the cases already considered,
see~\S\ref{subsec:inf}.

\sg{\section{One divisor supported at $\infty$}\label{sec:Even-two-infinities}}

\sg{Suppose that Assumption~\ref{assumptions} holds for $R,S, \infty_-,\infty_+$.}
\footnote{We explain in \S\ref{subsec:weierstrass} how to remove the assumption that $R$ or $S$ belongs to a Weierstrass disc.}
We now explain how to compute $h_p(\infty_- - \infty_+,R-S)$ using Steps \eqref{om'}--\eqref{ints}.

{\it Step \eqref{om'}} is solved by the following result.
\begin{proposition}\label{prop:residue-of-wg}
Let $\omega'=\omega_g$. Then $\Res(\omega')=\infty_- -\infty_+$.
\end{proposition}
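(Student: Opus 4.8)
The plan is to show directly that the differential $\omega_g = \frac{x^g\,dx}{y}$ has simple poles exactly at the two points at infinity, with residues $+1$ at $\infty_-$ and $-1$ at $\infty_+$, and no other poles. Since $f$ is monic of degree $2g+2$, the curve $C\colon y^2=f(x)$ has two rational points at infinity on the smooth model in $\PP_{1,g+1,1}$, and $\omega_g$ is clearly holomorphic away from infinity (it has no pole at the affine Weierstrass points because $\omega_i = \frac{x^i\,dx}{y}$ is, up to sign and the factor $\frac{2x^i}{f'(x)}$, equal to $\frac{2x^i\,dy}{f'(x)}$, which is regular there). So the only work is a local computation in a coordinate at each point at infinity.

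First I would introduce the local parameter. Near a point at infinity, set $t = 1/x$, so $x = 1/t$ and $dx = -dt/t^2$. On the smooth model one has a regular function $w$ with $w^2 = t^{2g+2}f(1/t)$; since $f$ is monic of degree $2g+2$, the polynomial $t^{2g+2}f(1/t)$ has constant term $1$, hence $w = \pm(1 + O(t))$ is a unit, and the two points at infinity $\infty_\pm$ correspond to the two choices of sign, i.e.\ $w(\infty_\pm) = \pm 1$. The relation to $y$ is $y = x^{g+1}w = w/t^{g+1}$. Then I would substitute:
\[
\omega_g = \frac{x^g\,dx}{y} = \frac{t^{-g}\cdot(-t^{-2}\,dt)}{t^{-(g+1)}w} = -\frac{dt}{t\,w}\,.
\]
Since $w$ is a unit with $w(\infty_\pm) = \pm 1$, the form $-\frac{dt}{tw}$ has a simple pole at $t=0$ with residue $-\frac{1}{w(\infty_\pm)} = \mp 1$. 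Hence $\Res_{\infty_-}(\omega_g) = +1$ and $\Res_{\infty_+}(\omega_g) = -1$, so $\Res(\omega_g) = \infty_- - \infty_+$, which is what we want (consistent with the claim recorded earlier that $\Res(\omega_g) = (0,\ldots,0,1,-1)$ in the ordering where the last two entries correspond to $\infty_-,\infty_+$).

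I do not expect any serious obstacle here; this is essentially a routine local coordinate computation. The only point requiring a little care is being consistent about which point at infinity is labelled $\infty_+$ versus $\infty_-$ — the excerpt fixes $\infty_\pm = (1:\pm 1:0)$, and one should check that under the identification $y = x^{g+1}w$ this corresponds to $w(\infty_\pm) = \pm 1$, which fixes the signs of the residues as above. One should also remark that $\omega_g$ has no other poles: it is manifestly regular on the affine part away from $y=0$, and at the affine Weierstrass points it extends regularly as noted, so the residue divisor is supported entirely at infinity and equals $\infty_--\infty_+$ exactly.
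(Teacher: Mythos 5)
Your proof is correct and follows essentially the same route as the paper: both use the uniformizer $t=1/x$ at $\infty_\pm$, identify the distinguishing unit (your $w$, the paper's $x^{g+1}/y$, which is $1/w$) with value $\pm 1$ at $\infty_\pm$, and rewrite $\omega_g$ as $-\tfrac{dt}{t}$ times that unit to read off the residues $\Res_{\infty_-}\omega_g=1$ and $\Res_{\infty_+}\omega_g=-1$. The only difference is that you spell out the weighted-projective coordinates and explicitly note regularity away from infinity, which the paper states more briefly.
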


\begin{proof}
  Recall that $\omega_g$ is holomorphic on $C(\overline{\Q}_p)\setminus \{\infty_-,\infty_+\}$. At $\infty_{\pm}$, we can take the uniformizer $t=\frac{1}{x}$, and these two points are distinguished by the function $\frac{x^{g+1}}{y}$, since
\[
\dfrac{x^{g+1}}{y}\left(\infty_+\right)=1, \hspace{2mm} \dfrac{x^{g+1}}{y}\left(\infty_-\right)=-1.
\]
Differentiating the relation $tx=1$ gives $\dfrac{dx}{x}=-\dfrac{dt}{t}$. Upon rewriting
$$\dfrac{x^gdx}{y}=\dfrac{x^{g+1}}{y}\dfrac{dx}{x}=-\dfrac{x^{g+1}}{y}\dfrac{dt}{t}\,,$$
we see that 
 \sg{$\Res_{\infty_+}\omega_g=-1$ and $\Res_{\infty_-}\omega_g=1$}.
\end{proof}

{\it Step \eqref{psiom'}.}
In the spirit of \cite[Algorithm 4.8]{BBHeights}, we define the differential  $$\alpha=\,\sg{\phi^*(\omega_g)-p\omega_g\,}.$$
The strategy employed in~\cite{BBHeights} is to first compute $\psi(\omega') $ and then
deduce $\psi(\alpha)$ from this. Here, we do the opposite, using the following result.

\begin{proposition}\label{P:alphasecond}\hfill
\begin{itemize}
\item[(a)] The differential $\alpha$ is holomorphic at both $\infty_{\pm}$.
\item[(b)] \sg{The
  differential $\alpha$ is essentially of the second kind. More precisely,
    $\alpha$ is holomorphic at non-Weierstrass points, and has  residue~0
    along the discs $D_P$ for all Weierstrass points $P$.} 
\end{itemize} 
\end{proposition}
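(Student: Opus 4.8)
The plan is to prove (a) and (b) by a careful local analysis of the differential $\alpha = \phi^*(\omega_g) - p\omega_g$ at the problematic points, namely the points at infinity and the Weierstrass points, using the explicit description of Kedlaya--Harrison's Frobenius lift $\phi$.

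First I would handle part (a). The key point is that the Frobenius lift $\phi$ of Kedlaya--Harrison maps the wide open space $\calU$ (which has the discs around the points in $V$, in particular around $\infty_\pm$, removed) to a space with the same cohomology, and on a neighborhood of $\infty_\pm$ it is given in the uniformizer $t = 1/x$ by a convergent power series $\phi^*(t) = t^p \cdot (\text{unit})$, i.e. $\phi$ reduces to the $p$-power Frobenius $t \mapsto t^p$. Since $\omega_g = x^g\,dx/y$ has a simple pole at $\infty_\pm$ with residue $\mp 1$ (Proposition~\ref{prop:residue-of-wg}), we can write $\omega_g = (\mp 1)\,dt/t + (\text{holomorphic})$ near each $\infty_\pm$. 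Pulling back, $\phi^*(dt/t) = d(\phi^*t)/(\phi^*t)$, and because $\phi^*t = t^p u$ with $u$ a unit power series, $d(\phi^*t)/(\phi^*t) = p\,dt/t + du/u$, where $du/u$ is holomorphic. Hence $\phi^*(\omega_g) = (\mp 1)(p\,dt/t + du/u) + \phi^*(\text{holomorphic part})$; the term $\phi^*(\text{holomorphic})$ is holomorphic at $\infty_\pm$ since $\phi$ is analytic there, and the leading pole $(\mp 1)\cdot p\,dt/t$ is exactly cancelled by $-p\omega_g = (\pm 1)p\,dt/t + (\text{holo})$. Thus $\alpha$ is holomorphic at $\infty_\pm$.

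For part (b), I would argue similarly at the Weierstrass points, and separately note holomorphy at all other points. Away from $V$, both $\omega_g$ and $\phi^*(\omega_g)$ are regular (the latter because $\phi$ maps $\calU$ into a space where $\omega_g$ has the relevant poles only at $V$), and since $\omega_g$ is in fact holomorphic on $C \setminus \{\infty_\pm\}$, there is nothing to remove at finite non-Weierstrass points — so $\alpha$ is genuinely holomorphic at every non-Weierstrass affine point. At a Weierstrass point $P = (a,0)$ with $a$ a root of $f$, $\omega_g = x^g\,dx/y$ is holomorphic (taking $y$ as uniformizer, $dx/y = 2\,dy/f'(x)$ is regular), but the Kedlaya--Harrison lift $\phi$ is only defined on $\calU$, i.e. after removing a disc $D_P$; on a small annulus around $D_P$ contained in $\calU$, $\phi^*(\omega_g)$ is a rigid analytic form which need not extend over $D_P$, so a priori it can have a nonzero residue along $D_P$. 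The claim is that this residue vanishes. I would deduce this from the residue exact sequence~\eqref{eq:Besser-Exact-Sequence}: the class $[\phi^*(\omega_g)] \in \hdr(\calU)$ has the same residues along the $D_P$ (for $P$ Weierstrass) as any form representing the cohomology class $\Frob[\omega_g]$, and since $\omega_g$ has trivial residue along all the Weierstrass discs (being holomorphic there), the Frobenius-equivariance together with the fact that $\Frob$ preserves $\hdr(C/\Q_p) = \ker(\Res)$ forces these residues of $\alpha$ to be $0$ as well; concretely, $\alpha = \phi^*(\omega_g) - p\omega_g$ represents $p\cdot$(its cohomology class is not quite the point) — more cleanly, the residues of $\alpha$ along the $D_P$ equal the residues of $\phi^*(\omega_g)$ (since $\omega_g$ has none), and these equal, up to the usual factor, the residues of a form in $\Span(\omega_0,\ldots,\omega_{2g})$ representing $\Frob$ applied to $[\omega_g]$; but $[\omega_g] \in \hdr(C/\Q_p) = \ker(\Res)$ and $\Frob$ preserves this subspace, so that representative, and hence $\alpha$, has trivial residue along every Weierstrass disc.

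The main obstacle I anticipate is the Weierstrass-point analysis in part (b): one must be careful about what "residue along $D_P$" means for the non-overconvergent form $\phi^*(\omega_g)$ and must correctly invoke the compatibility between the residue map on $\hdr(\calU)$ and the action of Frobenius — i.e. that $\Res \circ \phi^* $ and $\phi^* \circ \Res$ agree (with $\phi^*$ acting by $p$ on the residue targets $\C_p$ attached to points where $\phi$ ramifies like $t \mapsto t^p$), which is exactly what makes the exact sequence~\eqref{eq:Besser-Exact-Sequence} Frobenius-equivariant. Part (a) is a direct and essentially routine local computation once the shape of $\phi^*(1/x)$ near infinity is recalled from~\cite{Harrison-Even-Deg-MW}.
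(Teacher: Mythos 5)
Part (a) is handled in essentially the same way as the paper: expand $\omega_g$ near $\infty_{\pm}$ in the uniformizer $t=1/x$, observe that Kedlaya--Harrison's lift acts on $t$ by $t\mapsto t^p$ (the paper uses this exactly, with no unit factor), and check that the $p\,dt/t$ terms cancel. Your phrasing ``since $\phi$ is analytic there'' is a bit loose — the lift is only defined on $\calU$, not at $\infty_\pm$ — but the explicit expansion does show $\alpha$ extends to a power series in $t$, so the conclusion stands.

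Part (b) is where you diverge from the paper, and your argument as written has a real error. You assert that $[\omega_g] \in \hdr(C/\Q_p) = \ker(\Res)$. This is false: by Proposition~\ref{prop:residue-of-wg}, $\omega_g$ has residue $1$ at $\infty_-$ and $-1$ at $\infty_+$, so its class in $\hdr(\calU)$ is \emph{not} in the kernel of the residue map in sequence~\eqref{eq:Besser-Exact-Sequence}, and it does not define a class in $\hdr(C/\Q_p)$ at all (only the corrected combinations $\eta_i = \omega_{i+1}-c_i\omega_g$ do). The appeal to ``$\Frob$ preserves $\ker(\Res)$'' therefore does not apply to $[\omega_g]$ and the final step of your Weierstrass argument collapses. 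The paper instead proves the vanishing of the residue along each Weierstrass disc $D_P$ by a direct local computation: in the uniformizer $t=y$ at $P=(a,0)$, both $p\,\omega_g = p\cdot\frac{2x^g\,dy}{f'(x)}$ and the explicit expansion of $\phi^*\omega_g$ involve only even powers of $t$, so the coefficient of $t^{-1}\,dt$ vanishes.

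Your idea can be repaired without invoking $\ker(\Res)$ or Frobenius-equivariance at all: by Harrison's reduction one has $\phi^*\omega_g = \sum_{i=0}^{2g} f_{g,i}\,\omega_i + dv$ with $v$ an overconvergent function on $\calU$, and every $\omega_i$ is holomorphic at every affine Weierstrass point while $dv$ has trivial residue along any disc; hence $\phi^*\omega_g$, and therefore $\alpha$, has trivial residue along each Weierstrass disc. That is a legitimate and arguably cleaner alternative to the paper's parity computation, and it uses the same decomposition that the paper introduces in the proof of Proposition~\ref{prop:psi(omega_g)} (no circularity, since that decomposition is an output of Harrison's algorithm, independent of Proposition~\ref{P:alphasecond}). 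But as currently stated, the key justification in your Weierstrass step is wrong and needs to be replaced.
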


\begin{proof}
To prove (a) we only consider one point at infinity, say $\infty_-$; the
  proof is the same for the other one. The action of Frobenius on the
  uniformizer $t=\frac{1}{x}$ is 
\[
\phi^* t=\phi^*\left(\dfrac{1}{x}\right)=\dfrac{1}{x^p}=t^p\,.
\]
We know that $\dfrac{x^gdx}{y}=\dfrac{dt}{t}+A(t) dt $, where $A(t)\in \Q_p[[t]]$.
Hence
\[
\phi^*\left(\dfrac{x^gdx}{y}\right)=p\dfrac{dt}{t}+pt^{t-1}A(t^p)dt\,.
\]
It follows that
\[
\alpha=(pt^{p-1}A(t^p)-pA(t))dt
\]
is holomorphic at $\infty_-$. 

For (b), we first show that $\alpha$ is holomorphic at non-Weierstrass
  points. We already know that \sg{$\omega_g$} is holomorphic at every point
  $P=(a,b)\in C(\overline{\Q}_p)$ with $b\neq 0$. The function $t=x-a$ is a uniformizer at $P$, thus $dx=dt$. We compute 
\begin{equation}\label{eq:Frob-of-wg}
 \sg{\phi^*(\omega_g)}=\phi^*\left(\dfrac{x^gdx}{y}\right)=\dfrac{px^{pg+p-1}dx}{y^p}\displaystyle\sum_{i\geq
  0}\dbinom{-\frac{1}{2}}{k}\dfrac{(f(x^p)-f(x)^p)^i}{y^{2pi}}\,,    
\end{equation}
so that  \sg{$\ord_P\frac{\phi^*(\omega_g)}{dt}=\ord_P\frac{\phi^*(\omega_g)}{dx}\geq 0$}.
Hence the $t$-adic valuation of $\alpha$ is nonnegative.

A uniformizer at a Weierstrass point $P=(a,0)$ is given by $t=y$. Then $x$ is an
  even function of $t$ as we now explain. Let $x=\sum_{n\geq 0} e_nt^n$ for
  some $e_n\in\overline{\Q}_p$, then, inductively equating powers of $t$ on
  both sides of the equation $t^2=f(x)$, we see that $f(e_0)=0$, and then for odd $n$, $f'(e_0)e_n=0$. Since $f'(e_0)\neq 0$, it follows that $e_n=0$ for odd $n$.
We now prove that the same holds for the expansion of $\alpha$ in $t$. Using the relation
$$\dfrac{dx}{y}=\dfrac{2dy}{f'(x)}$$
it is clear that 
$$p\dfrac{x^gdx}{y}=p\dfrac{2x^gdy}{f'(x)}$$
has only even powers of $t$. For the other term of $\alpha$, we
  re-express~\eqref{eq:Frob-of-wg} using $dy$:
$$\phi^*\left(\dfrac{x^gdx}{y}\right)=\dfrac{2px^{pg+p-1}dy}{f'(x)y^{p-1}}\displaystyle\sum_{i\geq 0}\dbinom{-\frac{1}{2}}{k}\dfrac{(f(x^p)-f(x)^p)^i}{y^{2pi}},$$
from which we see that all powers of $y$ are even. Hence there are no odd
  powers of $t$ in the expansion of $\alpha$ and therefore \sm{$\alpha$ has
  trivial residue along $D_P$. }
\end{proof}

Recall that \sg{$\phi^*\omega_g=\sum_{i=0}^{2g}f_{g,i}\omega_i$}. 
\begin{proposition}\label{prop:psi(omega_g)}
\sm{
  We have
\begin{equation*}\label{Psi-Omega-From-Alpha}
\psi(\omega_g)=(\Frob-pI)^{-1}\cdot \begin{pmatrix}
f_{g,0} & \cdots & f_{g,g-1} & f_{g,g+1} \cdots & f_{g,2g}
  \end{pmatrix}^t
\end{equation*}}
with respect to the basis $[\eta_0],\ldots, [\eta_{2g-1}]$ of $\hdr(C)$.
\end{proposition}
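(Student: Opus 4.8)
The plan is to exploit the fact that $\psi$ is $\Q_p$-linear and that, by Proposition~\ref{P:alphasecond}, the differential $\alpha = \phi^*(\omega_g) - p\omega_g$ is essentially of the second kind, so that $\psi(\alpha)$ is simply the class $[\alpha] \in \hdr(C/\Q_p)$. First I would apply $\psi$ to the defining relation $\alpha = \phi^*(\omega_g) - p\omega_g$ and use linearity to get $\psi(\alpha) = \psi(\phi^*(\omega_g)) - p\,\psi(\omega_g)$. The key point is that $\psi$ commutes with the action of Frobenius: this follows from Corollary~\ref{cor:psi-commutes-with-isomoprhisms} applied to $\phi$ (or, more precisely, from the functoriality of $\psi$ under the Frobenius lift, which is the content of Proposition~\ref{prop:Cup-of-psi} combined with the fact that cup product is compatible with $\phi^*$ up to the scalar $p$). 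Hence $\psi(\phi^*(\omega_g)) = \Frob \cdot \psi(\omega_g)$, where $\Frob$ is the matrix from~\S\ref{frob}, and so
\[
[\alpha] = (\Frob - pI)\cdot \psi(\omega_g)\,.
\]

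Second, I would identify the class $[\alpha]$ explicitly in terms of the basis $[\eta_0],\ldots,[\eta_{2g-1}]$. Since $\phi^*\omega_g = \sum_{i=0}^{2g} f_{g,i}\omega_i$ in $\hmw(U)^-$, we have $\alpha = \sum_{i=0}^{2g} f_{g,i}\omega_i - p\omega_g$ as a class in $\hmw(U)^-$. But by Proposition~\ref{P:alphasecond}(a), $\alpha$ is holomorphic at $\infty_\pm$, i.e.\ lies in $\ker(\Res)$ in the exact sequence of Corollary~\ref{cor:exact-sequence-dr-cohomology-computation}, hence descends to a class in $\hdr(C/\Q_p)$. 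To express it in the $[\eta_i]$-basis I would use the change-of-basis relations $\eta_i = \omega_i$ for $0 \le i \le g-1$ and $\eta_i = \omega_{i+1} - c_i\omega_g$ for $g \le i \le 2g-1$: solving for the $\omega_i$ with $i \ne g$ in terms of the $\eta_i$ and using $\Res(\omega_g) = (1,-1)$ together with $\Res(\alpha)=0$ to eliminate the $\omega_g$-contribution, one finds that the coordinate vector of $[\alpha]$ with respect to $[\eta_0],\ldots,[\eta_{2g-1}]$ is exactly $(f_{g,0},\ldots,f_{g,g-1},f_{g,g+1},\ldots,f_{g,2g})^t$ — the $f_{g,g}$ and the $-p$ get absorbed because $\omega_g$ becomes a combination of the $\eta_i$ precisely via the residue condition. (One should double-check the $c_i$ bookkeeping here; this is where the only real computation lives.)

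Finally, combining the two displays, $(\Frob - pI)\cdot \psi(\omega_g) = (f_{g,0},\ldots,f_{g,g-1},f_{g,g+1},\ldots,f_{g,2g})^t$, and since $\Frob - pI$ is invertible (its eigenvalues are the Frobenius eigenvalues minus $p$, none of which is zero because the Frobenius eigenvalues on $\hdr$ have absolute value $\sqrt{p} \ne p$ and are $p$-adically either units or of small valuation, never equal to $p$), we may invert to obtain
\[
\psi(\omega_g) = (\Frob - pI)^{-1}\cdot \begin{pmatrix} f_{g,0} & \cdots & f_{g,g-1} & f_{g,g+1} & \cdots & f_{g,2g}\end{pmatrix}^t\,,
\]
as claimed. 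I expect the main obstacle to be the careful justification that $\psi$ intertwines $\phi^*$ with the cohomological Frobenius matrix $\Frob$ — one must be precise about the fact that $\phi$ is only a lift of Frobenius to a wide open subspace and that $\psi$ was originally defined (in~\cite{Coleman-Gross-Heights}) on all of $C$, so the compatibility statement requires the generalization of $\psi$ to rigid differentials on wide open spaces alluded to in the text, together with Besser's formula (Proposition~\ref{prop:Cup-of-psi}) and the compatibility of the cup product with $\phi^*$. The change-of-basis bookkeeping with the $c_i$ is routine but must be done carefully to land exactly on the stated vector.
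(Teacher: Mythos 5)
Your overall architecture matches the paper's: expand $\alpha = \phi^*(\omega_g) - p\omega_g$ in the $\omega_i$-basis via Harrison's reduction, use the residue argument (i.e.\ $\alpha$ holomorphic at $\infty_\pm$, the $\eta_i$ and exact forms residue-free) to show the $\omega_g$-coefficient vanishes and hence $[\alpha]$ has coordinate vector $(f_{g,0},\ldots,f_{g,g-1},f_{g,g+1},\ldots,f_{g,2g})^t$ in the $\eta$-basis, then invert $\Frob - pI$. The ``$c_i$ bookkeeping'' you flagged is indeed exactly this residue computation and it goes through; that part is fine.

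The genuine gap is in your justification of the intertwining $\psi\circ\phi^* = \Frob\circ\psi$. Your primary route --- applying Corollary~\ref{cor:psi-commutes-with-isomoprhisms} to $\phi$ --- does not work, because $\phi$ is \emph{not} an isomorphism of the proper curve $C$. It is only a rigid-analytic morphism from a wide open $\calU$ to another wide open with the same cohomology, and Corollary~\ref{cor:psi-commutes-with-isomoprhisms} is a statement about honest isomorphisms of curves (it uses~\cite[Lemma~3.8]{Besser-padic-Arakelov}, which assumes a morphism of the proper models). Your secondary suggestion via Proposition~\ref{prop:Cup-of-psi} plus $\phi^*$-compatibility of the cup product is closer in spirit but still needs the extension of $\psi$ to rigid analytic forms on $\calU$, since $\alpha$ and $\phi^*\omega_g$ are not meromorphic differentials on $C$. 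The paper's actual argument defines this extension as $\psi(\omega) := \mathbf{p}([\omega])$, where $\mathbf{p}\colon \hdr(\calU)\to\hdr(C)$ is the canonical projection arising from the \emph{splitting} of the residue exact sequence in~\cite[Proposition~4.8]{Besser-Syntomic-2}; since that splitting is constructed to be Frobenius-equivariant, the identity $\psi\circ\phi^* = \phi^*\circ\psi$ follows directly. This is the precise mechanism you were reaching for but did not pin down. (Your invertibility remark about $\Frob - pI$ via the Weil bounds is correct, though the paper leaves it implicit.)
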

\begin{proof}
Harrison's extension \cite{Harrison-Even-Deg-MW} of Kedlaya's algorithm
  \cite{Kedlaya-MW-reduction} shows that there is a certain overconvergent
  function $v$ on $\calU$
  such that we have 
\[
\alpha=\sg{\phi^*(\omega_g)-p\omega_g}=\sum_{0\leq i\leq 2g, i\neq g}f_{g,i}\omega_i + (f_{g,g}-p)\omega_g + dv\,.
\]
Recall that $\eta_i=\omega_i$ for $0\leq i<g$ and $\eta_i
=\omega_{i+1} -
c_i\omega_g$ for $g\leq i \leq 2g-1$ and some $c_i\in\Q_p$, so we can further write
\[
\alpha=\sum_{0\leq i< 2g}f_{g,i}\eta_i+\nu\omega_g+dv\,,
\]
for some $\nu\in\Q_p$. Since $\alpha$ is holomorphic at $\infty_-$  by Proposition~\ref{P:alphasecond}(a), $\eta_i$ are holomorphic, and $dv$ has all residues zero, we obtain that $\nu=0$ and
$\alpha=\sum_{0\leq i< 2g}f_{g,i}\eta_i+dv$. 
  Thus, we have that
\begin{equation}\label{alphahdrU}
 \sg{[\alpha]=\sum_{0\leq i< 2g}f_{g,i}[\eta_i]\in \hdr(\calU). }
\end{equation}

\sg{We now use an extension of the map $\psi$ to rigid analytic forms due to
Besser.
By~\cite[Proposition~4.8]{Besser-Syntomic-2}, the
sequence~\eqref{eq:Besser-Exact-Sequence} is split exact, so we obtain
a canonical projection ${\bf p}\colon \hdr(\calU)\to \hdr(C)$ which is the
identity on the image of $\hdr(C)$. For a rigid analytic form $\omega$
that is holomorphic on $\calU$, we define $\psi(\omega)\colonequals
\bf{p}([\omega])$. This extends the map $\psi$, see~\cite[Remark~4.12]{Besser-Syntomic-2}.

Recall that $\alpha$ is holomorphic along $\calU$
and essentially of the second kind on $C$ by Proposition~\ref{P:alphasecond}. Then the class of $\alpha$ in $\hdr(\calU)$ lies in the image of $\hdr(C)$ by
exactness of~\eqref{eq:Besser-Exact-Sequence}. Hence~\eqref{alphahdrU} immediately implies that this class is the image of $\sum_{0\leq i< 2g}f_{g,i}[\eta_i]\in \hdr(C)$. By the extended definition of $\psi$, we have that
$$\psi(\alpha)=\begin{pmatrix}
f_{g,0} & \cdots & f_{g,g-1} & f_{g,g+1} \cdots & f_{g,2g}
\end{pmatrix}^t.$$ 
with respect to the basis $[\eta_0],\ldots, [\eta_{2g-1}]$ of $\hdr(C)$.
The construction of the splitting
in~\cite[Proposition~4.8]{Besser-Syntomic-2} implies that the map $\psi$ satisfies
\begin{equation}\label{phipsi}
  \psi\circ \phi^*={\phi^*}\circ \psi\,.
\end{equation}
Therefore, $\psi(\alpha)=(\Frob - pI)\psi(\omega_g)$, which finishes the proof.
}

\end{proof}

\sg{Using Lemma~\ref{lm:represent-psi-in-mixed-basis}, we compute $u_0,\ldots,u_{g-1}\in \Q_p$ such that for 
$\omega=\omega_g-(u_0\eta_0+\cdots+u_{g-1}\eta_{g-1})$, we have $\Res(\omega) = \infty_- -\infty_+$ and $\psi(\omega)\in W$.}

\begin{corollary}\label{hpinfformula}
The local  height pairing $h_p$ with respect to $\ell_p$ and $W$ satisfies 
\[
h_p(\infty_- - \infty_+,R-S) = \int_S^R \omega_g -
  (u_0\int_S^R\eta_0+\cdots+u_{g-1}\int_S^R\eta_{g-1})\,,
\]
where $u_0,\ldots,u_{g-1}$ are defined in Lemma \ref{lm:represent-psi-in-mixed-basis}.
\end{corollary}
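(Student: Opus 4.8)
The plan is to recognize that the differential $\omega\colonequals\omega_g-\sum_{i=0}^{g-1}u_i\eta_i$ produced by Lemma~\ref{lm:represent-psi-in-mixed-basis} (applied with $\omega'=\omega_g$) is exactly the canonical form $\omega_{\infty_--\infty_+}$ attached by Proposition~\ref{prop:Unique-differential-3rd-kind} to the divisor $\infty_--\infty_+$ and the subspace $W$, and then to unwind Definition~\ref{def:local-height-above-p} using linearity and additivity of Coleman integration.

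First I would verify the two defining properties of $\omega_{\infty_--\infty_+}$ for $\omega$. By Proposition~\ref{prop:residue-of-wg} we have $\Res(\omega_g)=\infty_--\infty_+$, and since $\eta_0,\ldots,\eta_{g-1}$ are holomorphic on $C$ they have vanishing residues, so $\Res(\omega)=\infty_--\infty_+$. Using the base change in Proposition~\ref{prop:psi(omega_g)} I write $\psi(\omega_g)=\sum_{i=0}^{g-1}u_i[\eta_i]+\sum_{i=0}^{g-1}u_{g+i}\kappa_i$ in the basis of $\hdr(C/\Q_p)$ adapted to the splitting $\hdr(C/\Q_p)=\Span([\eta_0],\ldots,[\eta_{g-1}])\oplus W$; since $\psi$ sends a holomorphic form to its de Rham class, $\psi(\omega)=\psi(\omega_g)-\sum_{i=0}^{g-1}u_i[\eta_i]=\sum_{i=0}^{g-1}u_{g+i}\kappa_i\in W$. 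By the uniqueness statement in Proposition~\ref{prop:Unique-differential-3rd-kind}, $\omega=\omega_{\infty_--\infty_+}$.

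It then remains to compute the pairing. By Definition~\ref{def:local-height-above-p}, together with the convention fixed in Section~\ref{sec:Algorithm-computations} that $t_p$ is the identity in the cyclotomic setting, we have $h_p(\infty_--\infty_+,R-S)=\int_{R-S}\omega_{\infty_--\infty_+}=\int_{R-S}\omega$. By Assumption~\ref{assumptions} the points $R$ and $S$ lie in affine, non-Weierstrass residue discs, so each of $\omega_g,\eta_0,\ldots,\eta_{g-1}$ is holomorphic at $R$ and at $S$ and the Coleman integrals below are defined; additivity of Coleman integration along divisors gives $\int_{R-S}\omega=\int_S^R\omega$, and linearity gives $\int_S^R\omega=\int_S^R\omega_g-\sum_{i=0}^{g-1}u_i\int_S^R\eta_i$, which is the claimed formula. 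The only point requiring care is the appeal to uniqueness: one must know that the coefficients $u_i$ from Lemma~\ref{lm:represent-psi-in-mixed-basis} really place $\psi(\omega)$ inside $W$ and not merely in some complement, which is why it matters that the $\kappa_j$ form a basis of $W$ and that $\psi$ restricts to the class map on holomorphic forms; with that in hand, the rest is routine bookkeeping and I foresee no further obstacle.
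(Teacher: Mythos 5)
Your proof is correct and follows the same route as the paper's (implicit) argument: recognize that $\omega=\omega_g-\sum_{i=0}^{g-1}u_i\eta_i$ is, by Lemma~\ref{lm:represent-psi-in-mixed-basis} and Propositions~\ref{prop:residue-of-wg} and~\ref{prop:Unique-differential-3rd-kind}, the canonical differential $\omega_{\infty_--\infty_+}$, then apply Definition~\ref{def:local-height-above-p} with $t_p=\id$ and additivity/linearity of Coleman integration. The only small slip is attributing the passage to the mixed basis to Proposition~\ref{prop:psi(omega_g)} rather than to Lemma~\ref{lm:represent-psi-in-mixed-basis}, but this is cosmetic.
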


{\it Step \eqref{ints}.} 
Using \cite{Jen-Even-Degree-CI}, we compute 
\[
\int_S^R \omega_g,\;\; u_0\int_S^R\eta_0+\cdots+u_{g-1}\int_S^R\eta_{g-1}\,.
\]
\sm{By Corollary~\ref{hpinfformula}, this finishes the computation of the
local height $h_p(\infty_- - \infty_+,R-S)$.}
Note that for the computation of the integrals above we needed to assume
that $R$ and $S$ belong to the same residue disc or to \sg{non-Weierstrass} affine discs.
Also note that we do not need the cup product matrix to compute
$h_p(\infty_--\infty_+, R-S)$.

\sm{We emphasize that the only step in computing $h_p(\infty_--\infty_+, R-S)$ that depends on the points $R,S\in\Q_p$ is the computation of Coleman integrals with endpoints $R$ and $S$. Hence, if we want to compute several different local heights of the type $h_p(\infty_--\infty_+, R-S)$ (for example, for linear quadratic Chabauty in \cite{LinQC}), it is more efficient to complete all previous steps once and use them as input.}

\sg{\section{The affine case}\label{sec:affine}}

Our goal is to compute $h_p(P-Q,R-S)$ for four distinct affine points $P,Q,R,S\in
C(\Q_p)$. 
\sm{To this end, we describe Steps~\eqref{om'} to 
\eqref{ints} of Algorithm~\ref{alg:hp} in this case.} 
We assume that $P,Q,R,S$ satisfy \sg{Assumption~\ref{assumptions}}. 
As explained in~\S\ref{subsec:implementation}, our implementation in
\texttt{SageMath} assumes only Condition 1 for $P,Q,R,S$. 

We first use the same trick as in \cite{BBHeights}; we write a degree zero divisor as a sum of a symmetric and an antisymmetric one. 

\begin{lemma}\label{L:symm-antisymm}
\begin{align*} 
  h_p(P-Q,R-S) = &\dfrac{1}{2}\log_p\left(\dfrac{x(R)-x(P)}{x(R)-x(Q)}\cdot \dfrac{x(S)-x(Q)}{x(S)-x(R)}\right)\\ 
  &+ \dfrac{1}{2}h_p(P-\iota(P),R-S)-\dfrac{1}{2}h_p(Q-\iota(Q),R-S).
\end{align*}
\end{lemma}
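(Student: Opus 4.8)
The plan is to exploit bi-additivity of $h_p$ together with the defining property $h_p(\dv(g),D_2)=\chi(g(D_2))$ from Proposition~\ref{prop:height-properties}. The key observation is that the symmetric and antisymmetric parts of $P-Q$ with respect to $\iota$ satisfy
\[
2(P-Q) = \bigl((P-\iota(P)) - (Q-\iota(Q))\bigr) + \bigl((P+\iota(P)) - (Q+\iota(Q))\bigr),
\]
and the last summand $(P+\iota(P))-(Q+\iota(Q))$ is a principal divisor: it is the divisor of the rational function $g\colonequals \frac{x - x(P)}{x - x(Q)}$ on $C$, since $\dv(x-x(P)) = P + \iota(P) - \infty_- - \infty_+$ and similarly for $Q$, so the points at infinity cancel.

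First I would verify this divisor identity carefully, noting that $P,Q$ are assumed affine (so $x-x(P)$ genuinely has zeros at $P$ and $\iota(P)$ and no other affine zeros) and that $P\neq Q$ with $\iota(P)\neq Q$ by Assumption~\ref{assumptions}, so the function $g$ is well-defined and the supports behave as claimed. Then, applying bi-additivity in the first argument to $2(P-Q)$ and dividing by $2$,
\[
h_p(P-Q, R-S) = \tfrac12 h_p(P-\iota(P),R-S) - \tfrac12 h_p(Q-\iota(Q),R-S) + \tfrac12 h_p(\dv(g), R-S).
\]
For the last term I would invoke Proposition~\ref{prop:height-properties}: $h_p(\dv(g),R-S) = \chi(g(R-S)) = \log_p\!\bigl(g(R)/g(S)\bigr)$, using that $\chi=\log_p$ here and that $\supp(\dv(g))\cap\{R,S\}=\emptyset$, which again follows from Assumption~\ref{assumptions} (Condition~1, hence Condition~1', guarantees $R,S\notin\{P,Q,\iota(P),\iota(Q)\}$). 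Substituting $g(R)=\frac{x(R)-x(P)}{x(R)-x(Q)}$ and $g(S)=\frac{x(S)-x(P)}{x(S)-x(Q)}$ and rearranging the resulting quotient into the form stated in the lemma completes the argument.

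The only mildly delicate point — and the step I would be most careful about — is the bookkeeping of divisors at infinity and the disjointness of supports needed to legitimately apply both bi-additivity and the $h_p(\dv(g),\cdot)$ identity; once Assumption~\ref{assumptions} is in force this is routine, but it is worth stating explicitly since $h_p$ is only defined for divisors with disjoint support. The algebraic rearrangement of $g(R)/g(S)$ into $\frac{x(R)-x(P)}{x(R)-x(Q)}\cdot\frac{x(S)-x(Q)}{x(S)-x(P)}$ is immediate; note the stated lemma writes the second factor with denominator $x(S)-x(R)$, which I would double-check against the intended expression, but in any case it is a one-line manipulation of the logarithm of a product of linear terms.
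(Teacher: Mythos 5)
Your proof is correct and follows essentially the same route as the paper: decompose $P-Q$ into its $\iota$-antisymmetric part plus half the principal divisor of $\frac{x-x(P)}{x-x(Q)}$, then apply bi-additivity and Proposition~\ref{prop:height-properties}. You are also right that the denominator in the lemma statement should read $x(S)-x(P)$ rather than $x(S)-x(R)$; the paper's own proof and Step~\eqref{affine:final-formula} of Algorithm~\ref{alg:hp-affine} both use the correct expression $\frac{x(S)-x(Q)}{x(S)-x(P)}$, so the displayed statement contains a typo.
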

\begin{proof}
From 
\[
\dv\left(\dfrac{x-x(P)}{x-x(Q)}\right)=P+\iota(P)-Q-\iota(Q)\,,
\]
we find
\[
P-Q=\dfrac{1}{2}\dv\left(\dfrac{x-x(P)}{x-x(Q)}\right)+\dfrac{1}{2}(P-\iota(P))-\dfrac{1}{2}(Q-\iota(Q))\,.
\]
Recall from Proposition \ref{prop:height-properties} that we have 
\[
h_p\left(\dv\left(\dfrac{x-x(P)}{x-x(Q)}\right),
  R-S\right)=\log_p\left(\dfrac{x(R)-x(P)}{x(R)-x(Q)}\cdot \dfrac{x(S)-x(Q)}{x(S)-x(P)}\right)\,.
\]
The claim follows from additivity. 
\end{proof}
Condition~1 for  $P,Q,R,S$ implies that the right hand side in
Lemma \ref{L:symm-antisymm} is defined. 
 However, the much weaker condition $\{x(R),x(S)\}\cap \{x(P), x(Q)\} =
\emptyset$ actually suffices. This is precisely Condition~1'. We will explain in~\S\ref{subsec:weakening} how to
weaken Condition 1 to Condition~1' in such a way that
Lemma \ref{L:symm-antisymm} continues to hold (though we did not implement
this). 
\sm{In fact, Lemma \ref{L:symm-antisymm} is the reason why we cannot weaken Condition
1', even in theory. }

From now on, we will restrict attention to $h_p(P-\iota(P), R-S)$,
where $P$ is a non-Weierstrass point.
In this case, {\it Step \eqref{om'}}  follows from the following result,
which can be viewed as an explicit version of \cite[Proposition
5.13]{BBHeights}.
\begin{proposition}\label{prop:antisymmetric-residue-differential}
The differential form 
$$\omega'=\dfrac{y(P)}{x-x(P)}\dfrac{dx}{y}$$ 
satisfies $\Res(\omega')=P-\iota(P)$.
\end{proposition}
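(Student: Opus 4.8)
The plan is to compute the residue divisor of $\omega' = \frac{y(P)}{x - x(P)}\frac{dx}{y}$ directly at every point of $C(\overline{\Q}_p)$ where it could possibly have a pole, and to check that the result is exactly $P - \iota(P)$. Write $P = (a,b)$ with $b = y(P) \neq 0$, so that $\iota(P) = (a,-b)$. The differential $\frac{dx}{y}$ is holomorphic and nonvanishing away from the Weierstrass points and the points at infinity, while $\frac{1}{x-a}$ contributes potential poles only where $x = a$, i.e.\ at the two points $P$ and $\iota(P)$ (recall $a$ is not a root of $f$ since $b \neq 0$, so neither of these is a Weierstrass point). So the only candidate poles are $P$, $\iota(P)$, and $\infty_{\pm}$.

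First I would handle $P$ and $\iota(P)$. At $P = (a,b)$, the function $t = x - a$ is a uniformizer, so $dx = dt$ and $\frac{dx}{y}$ is holomorphic and nonzero there; thus near $P$ we have $\omega' = \frac{b}{t}\cdot\frac{dt}{y}$ with $\frac{1}{y}$ a unit taking value $\frac{1}{b}$ at $P$. Hence $\omega'$ has a simple pole at $P$ with residue $\frac{b}{y(P)} = 1$. The computation at $\iota(P) = (a,-b)$ is identical with $b$ replaced by $-b$ in the value of $y$: there $\frac{1}{y}$ is a unit with value $\frac{1}{-b}$, so the residue is $\frac{b}{-b} = -1$. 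This gives the contribution $P - \iota(P)$.

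Next I would check that $\omega'$ is holomorphic at $\infty_+$ and $\infty_-$, which is where a bit of care is needed. Using the uniformizer $t = \frac{1}{x}$ at each point at infinity, one has $\frac{dx}{y} = \frac{x^{g+1}}{y}\cdot\frac{dx}{x} = -\frac{x^{g+1}}{y}\cdot\frac{dt}{t}$, and $\frac{x^{g+1}}{y}$ takes the finite nonzero value $\pm 1$ at $\infty_{\pm}$ (as recalled in the proof of Proposition~\ref{prop:residue-of-wg}); so $\frac{dx}{y}$ itself has a simple pole at each $\infty_{\pm}$. However, the factor $\frac{1}{x - a} = \frac{t}{1 - at}$ vanishes to order $1$ at $t = 0$, which exactly cancels this pole. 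Concretely, near $\infty_{\pm}$,
\[
\omega' = \frac{y(P)}{x - a}\cdot\frac{dx}{y} = -\,y(P)\cdot\frac{t}{1-at}\cdot\frac{x^{g+1}}{y}\cdot\frac{dt}{t} = -\,y(P)\cdot\frac{1}{1-at}\cdot\frac{x^{g+1}}{y}\,dt,
\]
and since $\frac{1}{1-at}$ and $\frac{x^{g+1}}{y}$ are both holomorphic at $t = 0$, the form $\omega'$ is holomorphic at $\infty_{\pm}$, with residue $0$ there. Collecting the three loci, $\Res(\omega') = 1\cdot P + (-1)\cdot\iota(P) = P - \iota(P)$, as claimed. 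The only step requiring genuine attention is the cancellation of the pole of $\frac{dx}{y}$ at infinity against the zero of $\frac{1}{x-x(P)}$; everything else is a routine local computation.
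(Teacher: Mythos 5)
Your overall strategy matches the paper's: verify that $\omega'$ has simple poles with residues $\pm 1$ at $P$ and $\iota(P)$, and that it is holomorphic everywhere else, in particular at $\infty_\pm$. The local computation at $P$ and $\iota(P)$ is correct and identical in substance to the paper's. The paper, however, handles the behavior away from $P,\iota(P)$ in one stroke by citing that $\frac{dx}{y}=\omega_0$ is holomorphic on all of $C$ (it is the first element of the standard basis $\omega_0,\dots,\omega_{g-1}$ of holomorphic differentials for a hyperelliptic curve, including in even degree), so the only candidate poles come from the factor $\frac{1}{x-x(P)}$.

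Your attempt to re-derive this at infinity contains a genuine algebraic error. You wrote
\[
\frac{dx}{y}=\frac{x^{g+1}}{y}\cdot\frac{dx}{x},
\]
but the right-hand side equals $\frac{x^{g}\,dx}{y}=\omega_g$, not $\frac{dx}{y}=\omega_0$. You have in effect imported the computation from the proof of Proposition~\ref{prop:residue-of-wg}, which applies to $\omega_g$, and applied it to $\omega_0$. Consequently your claim that ``$\frac{dx}{y}$ itself has a simple pole at each $\infty_\pm$'' is false: with $t=1/x$ one has $\frac{dx}{y}=-\frac{dt}{t^2 y}$ and $\ord_t(y)=-(g+1)$, so $\ord_t\bigl(\frac{dx}{y}/dt\bigr)=g-1\ge 0$, i.e.\ $\omega_0$ is holomorphic (indeed vanishes to order $g-1$) at $\infty_\pm$. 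There is therefore no ``pole cancellation'' at infinity; rather, $\frac{1}{x-x(P)}$ has a simple zero there and $\omega_0$ already has a zero of order $g-1$, so $\omega'$ vanishes to order $g$ at each $\infty_\pm$. Your final conclusion that $\Res_{\infty_\pm}(\omega')=0$ is still correct, but the intermediate claim and the stated order of vanishing are wrong and should be repaired — the cleanest fix being to invoke, as the paper does, that $\frac{dx}{y}$ is holomorphic on $C$.
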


\begin{proof}
To prove this, we use that the differential $\dfrac{dx}{y}$ is holomorphic on $C$. The only possible poles of $\omega'$ are at $P$ and $\iota(P)$. 
At these points, $t=x-x(P)$ is a uniformizer, so
$$\omega'=\dfrac{y(P)}{y}\dfrac{dt}{t},$$
which immediately gives $\Res_P(\omega')=1$ and $\Res_{\iota(P)}(\omega')=-1$.
\end{proof}

For {\it Step \eqref{psiom'}}  we follow and improve \cite[\S5.2]{BBHeights}.
Let $\omega'$ be a form as in Proposition \ref{prop:antisymmetric-residue-differential} and write
$\psi(\omega')=\sum_{i=0}^{2g-1}u_i\eta_i$. Then Proposition~\ref{prop:Cup-of-psi} implies
$$\langle\omega',\eta_j \rangle=\psi(\omega')\cup
[\eta_j]=\sum_{i=0}^{2g-1}u_i([\eta_i]\cup[\eta_j])\,.$$
Recalling that $M$ is the cup product matrix, we have
  \begin{equation}\label{cupsymbol}
\begin{pmatrix}
u_0 & u_1 & \cdots & u_{2g-1}
\end{pmatrix}^t =-M^{-1}\begin{pmatrix}
    \langle \omega',\eta_0\rangle  &
   \langle \omega',\eta_1\rangle  &
   \cdots &
    \langle \omega',\eta_{2g-1}\rangle 
\end{pmatrix}^t.
  \end{equation}
By the same argument as in \cite[Proposition 5.12]{BBHeights}, we have
\[
\langle\omega',\eta_j\rangle=-\int_{\iota(P)}^P\eta_j-\Res_{\infty_+}\left(\omega'\int\eta_j\right)-\Res_{\infty_-}\left(\omega'\int\eta_j\right)\,,
\]
since $\infty_{\pm}\notin\supp(\Res(\omega'))$.
Because $P$ and $\iota(P)$ (as well as $Q$ and $\iota(Q)$) are endpoints of
Coleman integrals, we use our assumption that
Condition~2 is satisfied for $P$ and, by symmetry, $Q$.
We can simplify this
further as follows:

\begin{lemma}\label{L:zero-residue-even}
\sg{Let $\omega'$ be a form as in Proposition \ref{prop:antisymmetric-residue-differential}.} We have $\Res_{\infty_+}\left(\omega'\int\eta_j\right)=\Res_{\infty_-}\left(\omega'\int\eta_j\right)=0$ for $0\leq j\leq 2g-1$.
\end{lemma}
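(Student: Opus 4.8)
The statement claims that the differential $\omega' = \frac{y(P)}{x-x(P)}\frac{dx}{y}$ pairs to zero under the residue-at-infinity operation against $\int\eta_j$ for each $j$. The plan is to show both residues vanish by a parity (symmetry) argument at the two points at infinity, exploiting that $\omega'$ is an \emph{odd} function of $y$ while each $\eta_j$ is also odd, so that $\int\eta_j$ is \emph{even}; hence the product $\omega'\int\eta_j$ is odd, and an odd meromorphic differential has no residue at a point fixed by the hyperelliptic involution. Wait -- $\infty_\pm$ are not fixed by $\iota$; rather $\iota$ swaps them. So the correct statement is that $\iota$ interchanges $\infty_+$ and $\infty_-$, and for a differential $\beta$ we have $\Res_{\infty_+}(\iota^*\beta) = \Res_{\infty_-}(\beta)$. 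If $\beta = \omega'\int\eta_j$ satisfies $\iota^*\beta = -\beta$ (odd), then $\Res_{\infty_+}\beta = -\Res_{\infty_-}\beta$. Combined with the computation in \S\ref{cup} (where the final equality $m_{ij} = 2\Res_{\infty_+}(\eta_j\int\eta_i)$ used precisely that $\eta_j\int\eta_i$ is \emph{even}), I would instead need the opposite parity here, so the two residues would be negatives of each other, not both zero. That alone is not enough.

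So the real argument must be sharper: I would show directly that $\omega'\int\eta_j$ is actually \emph{holomorphic} at each of $\infty_+$ and $\infty_-$, so each residue is literally zero. First I would note $\frac{dx}{y}$ has a zero of order $g-1$ at each point at infinity (in the uniformizer $t = 1/x$), since $\omega_i = \frac{x^i dx}{y}$ is holomorphic at infinity exactly for $i \le g-1$ and $\omega_{g-1}$ is a nonvanishing holomorphic differential there. Hence $\frac{y(P)}{x-x(P)}\frac{dx}{y}$, being $\frac{y(P)}{x - x(P)}$ (which is regular and nonzero at $\infty_\pm$ since $P$ is affine) times $\frac{dx}{y}$, vanishes to order $g-1 \ge 0$ at each $\infty_\pm$; in particular $\omega'$ is holomorphic at $\infty_\pm$. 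Next I would expand $\int\eta_j = \int_Z^Q \eta_j$ near $\infty_\pm$: since each $\eta_j$ is holomorphic on $U$ and in particular at the points at infinity (the $\eta_i$ for $i < g$ are $\omega_i$, holomorphic everywhere; the $\eta_i = \omega_{i+1} - c_i\omega_g$ for $i \ge g$ are holomorphic at $\infty_\pm$ by the very choice of $c_i$, cf. Corollary~\ref{cor:exact-sequence-dr-cohomology-computation} and Lemma~\ref{lm:dR-from-MW}), the function $\int\eta_j$ extends to a \emph{holomorphic} function in the local parameter $t$ at each $\infty_\pm$ (its derivative $\eta_j/dt$ is holomorphic, so it is a power series in $t$ up to an additive constant). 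Therefore $\omega'\int\eta_j$ is holomorphic times holomorphic, hence holomorphic at $\infty_\pm$, and its residue there is $0$.

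The key steps in order: (1) recall/establish that $\frac{dx}{y}$, and more generally each $\eta_j$, is holomorphic at $\infty_+$ and $\infty_-$ -- this uses the description of the $\eta_j$ from \S\ref{basis}; (2) observe that $\frac{y(P)}{x - x(P)}$ is a rational function with no pole or zero at $\infty_\pm$ because $P$ is affine and $y(P) \ne 0$ (as $P$ is non-Weierstrass), so $\omega'$ is holomorphic at $\infty_\pm$; (3) observe that $\int\eta_j$, as an antiderivative of a differential holomorphic at $\infty_\pm$, is a holomorphic function of the local parameter there; (4) conclude the product $\omega'\int\eta_j$ is a holomorphic differential at $\infty_\pm$, hence has vanishing residue. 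A clean way to phrase steps (1)--(2) uniformly is: for the uniformizer $t = 1/x$ at $\infty_\pm$, one has $\frac{dx}{y} = (\text{holomorphic in } t)\,dt$ with a zero of order $g-1$, and $x - x(P) = \frac{1-x(P)t}{t}$, so $\frac{dx}{x - x(P)} \cdot \frac{1}{y} \cdot y(P)$ again only reduces the order of vanishing by at most... actually it does not change that it is regular, since $\frac{1}{x - x(P)} = \frac{t}{1 - x(P)t}$ is holomorphic and vanishing at $t = 0$. Hence $\omega'$ in fact vanishes at $\infty_\pm$.

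\textbf{Main obstacle.} The only subtlety I anticipate is making precise that $\int\eta_j$ -- which is a Coleman integral, a locally analytic but globally multivalued-ish object built by analytic continuation -- genuinely has a convergent power series expansion (with no log term, no negative powers) in the local parameter $t$ at each $\infty_\pm$; this is where one must invoke that $\eta_j$ has \emph{trivial residue} at $\infty_\pm$ (true since $\eta_j$ is holomorphic there) so that the local primitive exists as an honest power series, and that Coleman's theory makes $\int\eta_j$ agree with this local primitive up to an additive constant on the residue disc at infinity. Once that is granted -- and it is exactly the same local principle already used implicitly in \S\ref{cup} -- the residue computation is immediate, since the residue of a differential holomorphic at a point is zero. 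I would state this as: the functions $\int\eta_j$ are holomorphic on the residue discs of $\infty_\pm$ because $\eta_j$ is, and therefore $\omega'\int\eta_j$ has no pole at $\infty_\pm$, giving $\Res_{\infty_\pm}(\omega'\int\eta_j) = 0$.
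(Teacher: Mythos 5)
There is a genuine gap: you assert that every $\eta_j$, including those with $g\le j\le 2g-1$, is \emph{holomorphic} at $\infty_\pm$ ``by the very choice of $c_i$.'' That is false. The constants $c_i$ in \S\ref{basis} are chosen only so that $\eta_j$ has \emph{residue zero} at $\infty_\pm$ (they kill the simple-pole part of $\omega_{j+1}$), not so that $\eta_j$ has no pole there. For $j\ge g$, the differential $\eta_j=\omega_{j+1}-c_{j}\omega_g$ inherits from $\omega_{j+1}$ a pole at $\infty_\pm$ of order $j-g+2\ge 2$; it is of the \emph{second kind}, not holomorphic. Consequently your step (3) also fails: $\int\eta_j$ is not a holomorphic function of the local parameter $t$ at $\infty_\pm$ but has a pole of order $j-g+1$ (for $j=2g-1$ this is a pole of order $g$). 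With only the qualitative statement ``$\omega'$ is holomorphic/vanishing at $\infty_\pm$'' you therefore cannot conclude that the product $\omega'\int\eta_j$ is regular; a simple zero times a pole of order $g$ does not cancel when $g>1$.

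What rescues the statement -- and what the paper's proof actually does -- is a quantitative order count in the parameter $t=1/x$. One shows $\ord_t(\omega'/dt)=g$ exactly (not merely $\ge 0$): $dx/y$ vanishes to order $g-1$ and the factor $y(P)/(x-x(P))=y(P)\,t/(1-x(P)t)$ contributes another simple zero. One also shows $\ord_t(\int\eta_j)=g-j-1$, which for $j\ge g$ is negative. Adding these, $\ord_t\bigl(\omega'\int\eta_j/dt\bigr)=2g-1-j\ge 0$ precisely because $j\le 2g-1$, and regularity follows. Your reduction to $j<g$ (where both factors really are holomorphic) is fine, and your framing ``show the product is actually regular at $\infty_\pm$'' is the right idea; but for $j\ge g$ the poles of $\int\eta_j$ genuinely occur and must be cancelled by the explicit order-$g$ zero of $\omega'$, which your proof never establishes. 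Also, your aside about the parity argument is correct to discard: since $\iota$ swaps $\infty_\pm$ and $\omega'\int\eta_j$ is odd, parity only gives $\Res_{\infty_+}=-\Res_{\infty_-}$, which is strictly weaker than the claim.
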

\begin{proof}
It suffices to prove the statement for $\infty_+$ and for $j\geq g$.
  Consider the uniformizer $t=\frac{1}{x}$ at $\infty_+$. Then we have $\ord_t(y)=-g-1$. Using $dx=-t^{-2}dt$ we compute
\[
\ord_t\left(\dfrac{\omega'}{dt}\right)=\ord_t\left(-\dfrac{y(P)}{t^{-1}-x(P)}\dfrac{t^{-2}}{y}\right)=
  -2-(-1+(-g-1))=g,\quad \text{and}\] 

\[\ord_t\left(\int\eta_{j}\right)
  =\ord_t\left(\int \dfrac{t^{-j-1}t^{-2}dt}{t^{-g-1}}\right)= \ord_t\left(\int  t^{g-j-2}dt\right)=g-j-1.\]

Thus 
\[
\ord_t\left(\dfrac{\omega'\int\eta_{j}}{dt}\right)=2g-1-j\,,
\]
so $\omega'\int\eta_j$ is holomorphic at $\infty_+$ because $j\leq 2g-1$.
\end{proof}

\begin{remark}\label{R:zero-residue-odd}
We can prove a similar statement in the odd degree case. Namely, in the
  setting of \cite[Proposition 5.12]{BBHeights}, we also have
  $\Res_{\infty}\left(\omega'\int\eta_j\right)=0$ for $0\leq j\leq 2g-1$ \sg{and for the same definition of $\omega'$}.
  This can be used to simplify the computation of the global symbols
  in~\cite[\S5.2]{BBHeights}.
\end{remark}

\sg{We compute $u_0,\ldots,u_{g-1}\in \Q_p$ in Step \eqref{coeffs} as in
Lemma~\ref{lm:represent-psi-in-mixed-basis}.}

The main task of {\it Step \eqref{ints}} is the computation of the integral 
$$\int_S^R
\dfrac{y(P)}{x-x(P)}\dfrac{dx}{y}\,.$$ 
If $R$ and $S$ are in the same residue disc, then the integral
is tiny and can be computed directly. In the remaining cases, it can be computed using a change of variables that translates the desired
integral to the integral $\int \omega_g$ 
considered in Section~\ref{sec:Even-two-infinities}.
Namely, consider
\begin{align}\label{eq:change-of-variables}
  \tau\colon C&\to C'\colon
  y'^2=\dfrac{1}{y(P)^2}x'^{2g+2}f\left(x(P)+\frac{1}{x'}\right)\\
  (x,y)&\mapsto(x',y')\colonequals\left(\dfrac{1}{x-x(P)},\dfrac{-y}{y(P)(x-x(P))^{g+1}}\right)\,.\nonumber
\end{align}

\sg{Since $C'$ is defined by an even degree model, and since
$\tau(P)=\infty_-\in C'$ and $\tau(\iota(P))=\infty_+\in C'$, the
differential $\frac{y(P)}{x-x(P)}\frac{dx}{y}$ is mapped to
$\frac{x'^gdx'}{y'}$. This is also consistent with the fact that $\Res(\frac{y(P)}{x-x(P)}\frac{dx}{y})=P-\iota(P)$ and $\Res(\frac{x'^gdx'}{y'})=\infty_- - \infty_+$.}

\begin{lemma}\label{L:change}
Denote $R'=\tau(R)$, $S'=\tau(S)$. Then we have 
\[
  h_p(P-\iota(P),R-S) = \int_{S'}^{R'} \omega_g - u_0\int_S^R \eta_0 -
  \cdots - u_{g-1}\int_S^R \eta_{g-1}.
\]
\end{lemma}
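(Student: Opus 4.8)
The plan is to combine the change of variables $\tau$ from~\eqref{eq:change-of-variables} with the result of Section~\ref{sec:Even-two-infinities}, using Corollary~\ref{cor:height-independence} to match the heights on $C$ and $C'$, and Corollary~\ref{cor:psi-commutes-with-isomoprhisms} to ensure the complementary subspaces correspond correctly. First I would record that $\tau$ is an isomorphism of curves over $\Q_p$, that $C'$ is an even degree model, and that $\tau(P) = \infty_-$, $\tau(\iota(P)) = \infty_+$ on $C'$; then, as noted in the remark preceding the lemma, $\tau^*\left(\frac{x'^g dx'}{y'}\right) = \frac{y(P)}{x-x(P)}\frac{dx}{y} = \omega'$, which is consistent with both differentials having residue divisor $P - \iota(P)$ and $\infty_- - \infty_+$ respectively.

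The key step is to transport the subspace: let $W' \colonequals (\tau^{-1})^*(W) = \tau_*(W)$, so that $\tau^*(W') = W$. By Corollary~\ref{cor:height-independence} applied to the divisors $D_1 = P - \iota(P)$ and $D_2 = R - S$ (whose supports are disjoint by Condition~1, hence in particular Condition~1'), we get
\[
h_p(P - \iota(P), R - S) = h_p'\big(\tau_*(P - \iota(P)), \tau_*(R-S)\big) = h_p'\big(\infty_- - \infty_+, R' - S'\big),
\]
where $h_p'$ is the local height on $C'$ with respect to $W'$. Now I would apply Corollary~\ref{hpinfformula} (the result of Section~\ref{sec:Even-two-infinities}) to the curve $C'$: this expresses $h_p'(\infty_- - \infty_+, R' - S')$ as $\int_{S'}^{R'}\omega_g - \sum_{i=0}^{g-1} u_i' \int_{S'}^{R'}\eta_i'$, where the $\eta_i'$ are the corresponding differentials on $C'$ from~\S\ref{basis} and $\psi'(\omega_g) = \sum_i u_i' \eta_i' + (\text{terms in } W')$.

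It remains to identify the correction coefficients. By Corollary~\ref{cor:psi-commutes-with-isomoprhisms}, $\psi(\omega') = \psi(\tau^*(\omega_g)) = \tau^*(\psi'(\omega_g))$, and since $\tau^*$ maps the decomposition $\hdr(C'/\Q_p) = \langle [\eta_i']\rangle_{i<g} \oplus (\cdots) \oplus W'$ to the analogous decomposition for $C$ (with $\tau^*\eta_i'$ spanning the holomorphic part and $\tau^*W' = W$), the coefficients $u_0,\ldots,u_{g-1}$ of the holomorphic part of $\psi(\omega')$ in the basis $\eta_0,\ldots,\eta_{g-1}$ computed via Lemma~\ref{lm:represent-psi-in-mixed-basis} agree with the $u_i'$ pulled back. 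Finally, $\tau^*\eta_i' $ need not equal $\eta_i$ on the nose, but the integral $\int_{S'}^{R'}\eta_i'$ transforms by the change of variables back to an integral on $C$ of $\tau^*\eta_i'$ from $S$ to $R$; writing $\tau^*\eta_i'$ in terms of the $\eta_j$ and using that the resulting linear combination, after applying $\psi$ and subtracting, reproduces exactly $\sum_j u_j \int_S^R \eta_j$ with the $u_j$ of the lemma, gives the stated formula. The main obstacle I anticipate is precisely this bookkeeping: verifying that the pulled-back holomorphic differentials and their integrals recombine to give $\sum_{i=0}^{g-1} u_i \int_S^R \eta_i$ with the \emph{same} $u_i$ as produced by Step~\eqref{coeffs} on $C$, i.e.\ checking that the two routes (compute on $C$ directly vs.\ transport to $C'$) yield identical numerical outputs; this is where one must be careful that the choice $W' = \tau_*W$ is exactly the one making Corollary~\ref{cor:height-independence} applicable, and that Lemma~\ref{lm:represent-psi-in-mixed-basis} is $\tau$-equivariant.
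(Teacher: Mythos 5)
Your route is genuinely different from the paper's, and considerably more elaborate. The paper's proof is a one-liner: by Steps~(iii) and~(iv) of Algorithm~\ref{alg:hp} and Lemma~\ref{lm:represent-psi-in-mixed-basis}, one already has $h_p(P-\iota(P),R-S) = \int_S^R \omega' - \sum_{i<g} u_i \int_S^R \eta_i$ directly on $C$, with the $u_i$ computed on $C$ via the cup-product formula~\eqref{cupsymbol}; the only content of the lemma is then the identity $\int_S^R \omega' = \int_{S'}^{R'}\omega_g$, which is just the change of variables formula for Coleman integrals applied to $\tau^*\omega_g = \omega'$. No transport of the complementary subspace, and no appeal to Corollary~\ref{cor:height-independence}, is needed --- $C'$ enters only as a device for evaluating one Coleman integral. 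Your proposal instead transports the \emph{entire} height computation to $C'$ with $W' = \tau_* W$, invokes Corollary~\ref{hpinfformula} there with coefficients $u_i'$ computed on $C'$, and must then match these against the $u_i$ from $C$. You correctly flag this matching as the crux and sketch the right ingredients (Corollary~\ref{cor:psi-commutes-with-isomoprhisms} and the fact that $\tau^*$ respects the decomposition into holomorphic part and $W$), but you stop short of closing it. The cleanest way to finish your argument is via uniqueness: both $\omega' - \sum_i u_i\eta_i$ and $\omega' - \sum_i u_i'\,\tau^*\eta_i'$ are third-kind differentials with residue divisor $P - \iota(P)$ and $\psi$-image in $W$ (using $\psi\circ\tau^* = \tau^*\circ\psi'$ and $\tau^*W' = W$), so Proposition~\ref{prop:Unique-differential-3rd-kind} forces them to coincide, whence $\sum_i u_i'\int_{S'}^{R'}\eta_i' = \sum_i u_i\int_S^R\eta_i$. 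With that supplied your proof is valid, but it is worth noting that what your approach ``buys'' (a conceptual bridge to Section~\ref{sec:Even-two-infinities}, essentially the content of Remark~\ref{R:aff_inf}) is not needed here, whereas the direct proof avoids the coefficient-matching issue entirely.
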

\begin{proof}
  This follows from 
\begin{equation}\label{Integral:ChangeOfVariables}
\int_S^R \dfrac{y(P)}{x-x(P)}\dfrac{dx}{y} = 
\int_{S'}^{R'} \dfrac{x'^gdx'}{y'}\,, 
\end{equation}
for which we use the change of variables formula for
  Coleman integrals  (see \cite[Section~2]{Coleman-deShalit}.
\end{proof}
Lemma~\ref{L:change} reduces Step \eqref{ints} to Step \eqref{ints}
from \sg{Section~\ref{sec:Even-two-infinities}}. 

\begin{remark}\label{stepivcond2}
\sm{
In Lemma \ref{L:change}, the points $R, S\in
C(\Q_p)$, and $R',S'\in C'(\Q_p)$ are endpoints of Coleman integrals. 
Hence we
require Condition 2 \sg{for $R,S$}, and we require 
Condition
\sg{1} for $P,Q,R,S$. The division by $y(P)$ in the
definition of the map $\tau$ makes the assumptions $p\nmid y(P)$ and $p\nmid y(Q)$
necessary, since otherwise $C'$ could have non-integral coefficients, so we keep Condition 2 also for $P,Q$. }
\end{remark}

\sm{For convenience, we summarize our algorithm to compute
$h_p(P-Q, R-S)$, where all points are affine. This makes
Algorithm~\ref{alg:hp} more precise in this case.}
\sg{
\begin{alg} \label{alg:hp-affine}  {\bf (Computation of $h_p(P-Q, R-S)$ for
  affine $P,Q,R,S$)}
\hfill

 {\bf Input:} The same input as for Algorithm~\ref{alg:hp}, with all points
      $P,Q,R,S$ affine and satisfying Assumption~\ref{assumptions}.

{\bf Output:} The local height $h_p(P-Q, R-S)$.
\begin{enumerate}
\item Find a differential $\omega'$ such that $\Res(\omega')=P-\iota(P)$. Define  $\omega'$ as  in Proposition~\ref{prop:antisymmetric-residue-differential}.
  \item\label{affine:psi} Compute $\psi(\omega')$.
\item  Compute the unique coefficients $u_0,\ldots,u_{g-1}\in
  \Q_p$ 
    such that
   \sg{$\omega\colonequals \omega' -\sum^{g-1}_{i=0}u_i\eta_i$} satisfies $\Res(\omega)=P-Q$ and $\psi(\omega)\in W$.
\item  Compute the Coleman integrals $\int_S ^R \omega'$ and \sg{$\int_S ^R \eta_i$} for
  $i=0,\ldots,g-1$ and compute $h_p(P-\iota(P),R-S) =\int^R_S\omega'-\sum^{g-1}_{i=0}u_i\int^R_S\eta_i$.
\item Similarly, compute $h_p(Q-\iota(Q),R-S)$.
\item\label{affine:final-formula} Return
$$\dfrac{1}{2}\left(h_p(P-\iota(P),R-S)-h_p(Q-\iota(Q),R-S) +
    \log_p\left(\dfrac{x(R)-x(P)}{x(R)-x(Q)}\cdot \dfrac{x(S)-x(Q)}{x(S)-x(P)}\right)\right).$$
\end{enumerate}
\end{alg}}

\begin{remark}\label{R:BB-nonholo-integral}
Alternatively, one could potentially compute $\int^R_S\omega'$ using 
\cite[Algorithm 4.8]{BBHeights}. Define $\alpha\colonequals
  \phi^*\omega'-p\omega'$ and $\beta$ to be a differential whose residue
  divisor is $R-S$. Then the desired integral can be expressed as 
  \begin{equation}\label{E:4.8}
\int_S^R\omega'=\dfrac{1}{1-p}\cdot \left(\psi(\alpha)\cup\psi(\beta)+\sum_{P\in\mathcal{P}}\Res_P\left(\alpha\int\beta\right)-\int_{\phi(S)}^S\omega-\int_{R}^{\phi(R)}\omega\right),
  \end{equation}
where $\mathcal{P}$ is the subset of $C(\overline{\Q}_p)$ consisting of the
Weierstrass points and the
poles of $\alpha$. See Equation~(14) in~\cite{BBHeights}.
While this formula is only proved there for odd degree hyperelliptic curves, the extension
to even degree is immediate. This leads to a more general, but more complicated algorithm.
\end{remark}

\begin{remark}\label{R:aff_inf}
We can also completely reduce the affine case
  to the infinite case discussed in Section~\ref{sec:Even-two-infinities}. 
  Recall that it suffices to compute heights of the form $h_p(P-\iota(P), R-S)$.
  \sg{Using the same notation and the map $\tau\colon C\rightarrow C'$ as in Lemma
  \ref{L:change}, and the model-independence of local heights (see Corollary
  \ref{cor:height-independence}), this amounts to computing $h_p(\infty_- - \infty_+, R'-S')$
  on $C'$.}
\end{remark}

\section{Precision analysis}\label{sec:prec-p-adic-heights}
\sm{
  In this section we analyze the precision of our algorithms.}
  The algorithms in \S\ref{basis} and \S\ref{cup}  depend on the precision of local
coordinates. In both cases, we need to compute residues of differentials at
points at infinity. The computations in~\S\ref{cup} clearly require more precision than
\S\ref{basis}. We want to compute
$\Res_{\infty_-}\left(\eta_j\int\eta_i\right)$ for $0\leq i,j\leq 2g-1$.
Recall that $t=\frac{1}{x}$ is a uniformizer at $\infty_-$ and that for $g\leq i\leq 2g-1$ we have
$\eta_i=\left( \frac{1}{t^{i-g+2}}+\cdots\right)dt$. The term that requires the largest precision is 
\[
\Res_{\infty_-}\left(\eta_{2g-1}\int\eta_{2g-1}\right)=\left( \frac{1}{t^{g+1}}+\cdots\right)dt\cdot \left( \frac{1}{gt^{g}}+\cdots\right).
\]
Hence we need sufficient $t$-adic precision to compute the coefficient of
$t^{g-1}$ of $\eta_{2g}$, which amounts to computing the first $2g+1$ coefficients of $y=y(t)$. 

Using Harrison's extension of Kedlaya's algorithm~\cite{Harrison-Even-Deg-MW}, we can
compute the matrix of Frobenius acting on $[\omega_0],\ldots,[\omega_{2g}]\subset \hmw(U)^-$ to any desired precision $p^n$. We assume
that we have already computed the basis differentials
$\eta_0,\ldots,\eta_{2g-1}$ in~\S\ref{basis} to the same
precision $p^n$. Then it is clear that the results of~\S\ref{frob}
and~\S\ref{subsec:subspace} are correct to
precision $p^n$ as well. 

\sm{We now analyze precision for Algorithm}~\ref{alg:hp}.
Step \eqref{om'} is exact, so there is no loss of precision. 
\sg{
Step \eqref{coeffs} only involves a linear change of variables by Lemma~\ref{lm:represent-psi-in-mixed-basis}, and the loss of precision in this step is bounded by the valuation of the determinant of the corresponding matrix, which can be precomputed.


The precision in Step \eqref{ints} depends on the precision of the Coleman integrals and the coefficients computed in \eqref{coeffs}. We compute the integrals using Algorithm 4 from \cite{Jen-Even-Degree-CI}. As stated in loc. cit., the same precision estimates as in~\cite{BBK} apply. Briefly:
\begin{itemize}
\item[(a)] By~\cite[Proposition~18]{BBK}, the tiny integrals in~\cite[Equation~(3.2)]{Jen-Even-Degree-CI} are computed correctly to $\min\{n,k+1-\lfloor\log_p(k+1)\rfloor$ digits of precision, where the points are accurate to $n$ digits and we truncate the local expansions of the differentials at precision $t^k$.

\item[(b)] As in~\cite{BBK}, the evaluation of the functions $h_i$ at the
  endpoints does not lead to any loss in precision. Indeed, according to
    Harrison's algorithm, the functions $h_i$ are finite sums of terms
    $c_{i,j}x^iy^j$, where $i\in \Z, i\geq 0$, $j\in \Z$. If the $c_{i,j}$
    are computed modulo $p^n$, then $h_i(P)$ is correct to at least
    the same precision.
\item[(c)] By~\cite[Proposition~19]{BBK}, if we denote $m'\colonequals \ord_p\det(\Phi-I)$, and the previous computations are correct modulo $p^n$, then the Coleman integrals are computed correctly modulo $p^{n-m'}$.
\end{itemize}
\begin{remark}
    By~\cite[Section~3]{Harrison-Even-Deg-MW}, a working precision of $n+\lfloor\log_p(2n)\rfloor+1$ digits suffices to compute both the matrix of Frobenius and the functions $h_i$ to precision $p^n$.
\end{remark}

Step \eqref{psiom'} is the only step where we need to distinguish cases. We first deal with the approach from \sg{Section~\ref{sec:Even-two-infinities}}. Suppose that the matrix of Frobenius and the differentials $\eta_0,\ldots,\eta_{2g-1}$ of
$\hdr$ are correct modulo $p^n$. Denoting  $m\colonequals \ord_p\det(\Frob-pI)$, we compute $\psi(\omega')$ in terms of $\eta_0,\ldots,\eta_{2g-1}$ correctly modulo $p^{n-m}$.

It remains to analyze the precision for Step \eqref{psiom'} in \sg{Section~\ref{sec:affine}}; more precisely, this is Step~\eqref{affine:psi} in Algorithm~\ref{alg:hp-affine}). We first compute the global symbols $\langle
\omega',  \eta_j\rangle = -\int_{\iota(P)}^P\eta_j$.
and then apply~\eqref{cupsymbol}.
Hence, if all computations so far are correct modulo $p^n$, and we denote $m''=\ord_p\det(M)$, where $M$ is the cup product matrix, the coordinates of $\psi(\omega')$ in Step \eqref{psiom'} are computed accurately modulo $p^{n-m''}$.

All three matrices $\Frob-pI$, $\Phi-I$, and $M$ depend only on the curve. Thus, we may precompute their determinants and predict which starting
precision is necessary to reach a specified target precision.
For the approach discussed in Remark~\ref{R:aff_inf}, we also need to
compute $\det(\Frob-pI)$ and $\det(\Phi-I)$ for the curve $C'$, which depends on the point $P$.

In addition, Step~\eqref{affine:final-formula} in Algorithm~\ref{alg:hp-affine} requires an evaluation of $\log_p$, but since $P,Q,R,S$ satisfy Assumption~\ref{assumptions}, this does not lead to any loss of precision. 

\begin{remark}\label{R:pg}
Our precision analysis assumes that the matrix of Frobenius
on $\hmw(U)$ is $p$-adically integral. 
As shown in~\cite[Lemma~3.4, Conclusion]{Harrison-Even-Deg-MW} this is the
case when $p>g$, so we assume this, as
in \cite{Jen-Even-Degree-CI}, whenever needed.
\end{remark}

\begin{remark}\label{R:precspecial}

  In the precision analysis above, we assume Assumption~\ref{assumptions}. As discussed
  in~\S\ref{subsec:weierstrass} and \S\ref{subsec:inf},
  Condition~2 is not necessary if we assume Condition~1, and in order to reduce 
  to the situation treated above, only symmetry and 
tiny integrals are required.
  Hence, we do not need an additional
  precision analysis in this more general situation.
\end{remark}
}

\section{Implementation}\label{subsec:implementation}
We have implemented Algorithm~\ref{alg:hp} in {\tt SageMath}. For simplicity,
we assume that $p>g$, whenever needed (see Remark~\ref{R:pg}),
and that the branch $\log_p$ is the
branch determined by $\log_p(p)=0$. This suffices for our applications
discussed in the present article and in~\cite{LinQC}.
{\tt SageMath} is particularly suitable for our approach because there are
existing {\tt SageMath}-implementations of 
\begin{itemize}
\item Harrison's extension of Kedlaya's algorithm to
    compute $\hmw(U)$ and the action of Frobenius on $\hmw(U)^-$
    (see~\cite{Harrison-Even-Deg-MW}) and
\item Balakrishnan's algorithm from~\cite{Jen-Even-Degree-CI} to compute
  Coleman integrals
\end{itemize}
when $C$ is a monic even degree hyperelliptic curve; 
  available from~\url{https://github.com/jbalakrishnan/AWS}.
 In particular, the command \verb+coleman_integrals_on_basis+ computes the integrals
$\int_S^R\eta_0,\ldots,\int_S^R\eta_{2g}$ efficiently if $R$ and $S$ are $\Q_p$-points in
affine discs. 
This made it easy to implement the algorithm outlined 
in Sections~\ref{sec:Even-two-infinities} and~\ref{sec:affine}
for points $P,Q,R,S\in C(\Q_p)$
that satisfy \sg{Assumption~\ref{assumptions}}. Recall that by~\S\ref{subsec:weierstrass} we can drop Condition~2 \sg{for both pairs $(P,Q)$ and $(R,S)$} provided
Condition~1 is satisfied.

We require Condition~1 because we use Lemma~\ref{L:change} for
Step \eqref{ints}
of Section~\ref{sec:affine}, since it is much simpler than the one based on
Equation~\eqref{E:4.8}. The latter would require
an implementation of a generalization of the rather complicated Algorithm~4.8 from~\cite{BBHeights} to even
degree, which we did not attempt. In particular, the computation of the residues in~\eqref{E:4.8} requires
computations to high precision in annuli with Laurent series that have an essential
singularity and computations over extensions of $\Q_p$ that can have degree as large as
$p$, or even larger.
While this would lead to a more general implementation than our present one, we expect it to be  
significantly slower.
    We have also implemented the approach discussed in
    Remark~\ref{R:aff_inf} in {\tt SageMath}. For a single  height computation,
    its performance is similar to that of the implementation discussed in
    the present subsection. However, when we need to compute several local
    heights on the same curve, then the approach from
    Remark~\ref{R:aff_inf} requires precomputations for several different
    curves that show up in this approach, making it less efficient.

In practical applications, such as quadratic Chabauty or the computation of the $p$-adic
regulator, we often have some freedom in choosing representatives, so that an algorithm
for points satisfying Condition~1 usually suffices.

\subsection{Timings}\label{subsec:timings}
To illustrate the performance of our algorithm we present timings in
Table~\ref{tab:timings} by computing $h_p(P-Q, R-S)$ for the following curves $C_g$ of respective genus
$g$ and various primes $p$:
\begin{align*}
  C_2&\colon y^2=x^5+(x-1)(x-4)(x-9)(x-16)\\
    &P=(1,1),\, Q=(4,2^5),\,R=(9,3^5),\,S=(16,4^5) \in C_2(\Q_p)\\
  C_3&\colon y^2=x^7+(x-1)^2(x-4)^2(x-9)(x-16)\\ 
    &P=(1,1),\, Q=(4,2^7),\,R=(9,3^7),\,S=(16,4^7) \in C_3(\Q_p)\\
  C_4&\colon y^2=x^9+(x-1)^2(x-4)^2(x-9)^2(x-16)^2\\ 
    &P=(1,1),\, Q=(4,2^9),\,R=(9,3^9),\,S=(16,4^9) \in C_4(\Q_p)\\
  C_{17}&\colon y^2=x^{35}+(x-1)^2(x-4)^2(x-9)^2(x-16)^2\\
    &P=(1,1),\, Q=(4,2^{35}),\,R=(9,3^{35}),\,S=(16,4^{35}) \in C_{17}(\Q_p)
\end{align*}
For example, we find that on $C_2$ we have
\[h_7(P-Q,R-S)= 5\cdot7 + 4\cdot7^3 + 6\cdot7^4 + 3\cdot7^5 + 3\cdot7^6 +
4\cdot7^7 + 4\cdot7^8 + 6\cdot7^9 + O(7^{10})\,.
\]

\begin{table}[t]
\begin{tabular}{| l | c | c | c | c | }
\hline
  Curve & $p$ & Precision & Our time & \cite{BBHeights} \\
\hline 
$C_2$ & 7 & 10 & 2s & 7s  \\ 
$C_2$ & 7 & 300 & 11m &  \\ 
$C_2$ & 503 & 10 & 4m & 19h \\ 
$C_3$ & 11 & 10 & 6s & 28s \\
$C_4$ & 23 & 20 & 2m & 46m \\
$C_{17}$ & 11 & 7 & 14m & \\ \hline
\end{tabular}
  \caption{Timings}\label{tab:timings}
\end{table}
The examples were computed on a single core of a 4-core 2.6 GHz Intel
i7-6600U CPU with 8GB RAM, running {\tt SageMath~9.8} on {\tt  Xubuntu 20.04}.
Whenever possible, we compare our
implementation against the implementation of 
the algorithm 
from~\cite{BBHeights} in \texttt{SageMath} (see
  \url{https://github.com/jbalakrishnan/AWS}).
We found that our implementation performed better in all examples. 
It can handle fairly large genera, primes and
precisions, whereas the \texttt{SageMath}-implementation  
of the algorithm from~\cite{BBHeights} is limited to genus at most~4 and
did not finish the computation of $h_{7}(P-Q,R-S)$ on $C_2$ to $300$ digits
of precision in one week.
Code for these timings can be found at~{\url{https://github.com/StevanGajovic/heights_above_p}}.

\begin{remark}\label{R:extensions}
  The main reason for assuming that our points are defined over $\Q_p$ is
  that for even degree hyperelliptic curves, no implementation of Coleman
  integration over proper extensions of $\Q_p$ is currently available. Once such
  an implementation becomes available, it should be possible to extend 
  our algorithm without major problems.
    For instance, Best~\cite{Alex-ColemanIntegration-Unramified-Superelliptic} has implemented Coleman integration for superelliptic
    curves $y^n=f(x)$ with $\gcd(n,\deg(f))=1$ in {\tt Julia}, extending the approach of
    Balakrishnan--Bradshaw--Kedlaya~\cite{BBK} for odd degree hyperelliptic curves. Notably, his implementation
    works for unramified extensions of $\Q_p$. The special case of an extension to
    $\gcd(n,\deg(f))\ne 1$ would allow us to extend our algorithm in practice to points
    $P,Q,R,S$ defined over unramified extensions of $\Q_p$.
    Moreover, in forthcoming work, Best, Kaya and Keller will extend the general
    Coleman integration algorithm from~\cite{BalakrishnanTuitman} and give
    a {\tt Magma}-implementation, extending the one
for Coleman integrals over $\Q_p$ available 
    from~{\url{https://github.com/jtuitman/Coleman}}.
\end{remark}

\subsection{Implementation - special cases}\label{subsec:implementation_special_cases}

\sg{We now explain how to compute $h_p(P-Q, R-S)$ when Condition 2 is not
satisfied for either $(P,Q)$ or $(R,S)$}. Recall that we still assume that $P,Q,R,S\in C(\Q_p)$ satisfy Condition 1.

\subsubsection{Weierstrass and infinite discs}\label{subsec:weierstrass}
We can deal with the case that $R$ or $S$ belongs to a Weierstrass disc as
follows. Suppose that $D(R)$ contains a Weierstrass point $T$ and that
$\omega$ has no singularity at $T$. This is satisfied in our applications of Algorithm~\ref{alg:hp}.
Then
$\int_S^R \omega= \int_S^T \omega + \int_T^R \omega$. The second integral is tiny,
   so we can compute it. Since we integrate odd forms, we can also compute the first integral, using $\int_S^T \omega=\frac{1}{2}\int_S^{\iota(S)} \omega$ (see \cite[Lemma 16]{BBK}). 
 We could therefore replace 
  Condition~2 by the weaker condition that
   $\ord_p(y(R))\geq 0$ and $\ord_p(y(S))\geq 0$.

By symmetry, it follows that we can compute $h_p(P-Q,R-S)$ if
$\ord_p(y(P)),\ord_p(y(Q))\geq 0$ or if  $\ord_p(y(R)),\ord_p(y(S))\geq 0$.
Lemma \ref{L:symm-antisymm} implies that it is enough to compute heights of
the  type $h_p(P-\iota(P),R-\iota(R))$. Condition 1 implies that
it is not possible to have both $\ord_p(y(P))<0$ and $ \ord_p(y(R)) <0$ at
the same time. Hence, without loss of generality, the only remaining case
is when we have $\ord_p(y(P))\ge 0$, $\ord_p(y(R)) <0$, and $R\in D(\infty_-)$. Then
\begin{multline*}
h_p(P-\iota(P),R-\iota(R))=h_p(P-\iota(P),\infty_- - \infty_+)\\ +
  h_p(P-\iota(P), R - \infty_-) -  h_p(P-\iota(P), \iota(R) - \infty_+)\,.
\end{multline*}
We find $h_p(P-\iota(P),\infty_- - \infty_+)$ using the method from
Section~\ref{sec:Even-two-infinities}. The integrals that show up in Step
\eqref{psiom'} and Step \eqref{ints} when computing $h_p(P-\iota(P), R -  \infty_-)$ and $h_p(P-\iota(P), \iota(R) - \infty_+)$ are tiny, so we can  compute all heights on the right hand side.

 Hence we only have to
assume Condition 1 when the points $P,Q,R,S$ are affine. 
We have implemented this approach in \texttt{SageMath}. 
\subsubsection{Points at infinity}\label{subsec:inf}
When exactly one of the points $P,Q,R,S$ (satisfying Condition 1) is a point at infinity, we compute
  the local height using a similar strategy. Namely, we have
  \begin{align*}
    h_p(\infty_- - Q,R-S)
     =&\dfrac{1}{2}\left(\log_p\left(\dfrac{x(S)-x(Q)}{x(R)-x(Q)}\right)\right.
     \\&+ \left.h_p(\infty_- - \infty_+,R-S)-  h_p(Q-\iota(Q),R-S) \right).
  \end{align*}
The case $P=\infty_+$ is analogous, since $h_p(\infty_+ - Q,R-S)=h_p(\infty_- -\iota(Q),\iota(R)-\iota(S))$. 
We have also implemented this case in \texttt{SageMath}.

\subsubsection{Weakening Condition~1 to Condition~1'}\label{subsec:weakening}
We now explain how Condition~1 may be weakened. However, we have
not implemented this, so in our implementation 
we assume Condition~1.

Assume that the points $P,Q,R,S$ do not satisfy Condition~1, but do
satisfy Condition~1'.
By Remark \ref{R:aff_inf}, it is enough to consider the approach in
Section~\ref{sec:Even-two-infinities}, where Condition~1
is required to perform Step~\eqref{ints}. 
One possible approach is to use the existing {\tt Magma} implementation of the algorithm
for Coleman integrals on general 
curves due to Balakrishnan and
    Tuitman~\cite{BalakrishnanTuitman}. This algorithm directly computes Coleman integrals with
    respect to a basis of $\hdr(C/\Q_p)$. However, it currently cannot be used
    to compute integrals of differentials that are not essentially of the second kind; in particular,
    we cannot compute integrals of the form $\int^R_S\omega_g$ directly.  Instead, we could
    follow the strategy from~\cite{BBHeights} to carry out Step~\eqref{ints}, by using \sg{the differential $\alpha=\phi^{*}(\omega_g)-p\omega_g$. which is essentially of the second kind.}  Hence
    $\int_R^S \alpha$ can be computed using the existing {\tt Magma} implementation even when $R$ or $S$ belong to $D(\infty_-)$
    or $D(\infty_+)$.
    We obtain the required integral from
    \[
    \int_S^R\omega_g=\dfrac{1}{1-p} \left(\int_S^R\alpha - \int_{\phi(S)}^S\omega_g - \int_R^{\phi(R)}\omega_g\right).
    \]
The integrals  \sg{$ \int_{\phi(S)}^S\omega_g$} and \sg{$\int_R^{\phi(R)}\omega_g$} are tiny integrals,
  hence they can be computed easily. 

  \begin{remark}\label{R:} 
As remarked in Step \eqref{ints} of Section~\ref{sec:affine}, if the
endpoints of a Coleman integral are in the same residue disc, we can
compute it as a tiny integral. Hence it would be also possible to weaken
Condition~1 by allowing that $R$ and $S$ (or by symmetry, $P$ and
$Q$) are in the same residue disc. But as this is a minor improvement
comparing to weakening Condition~1 to Condition~1' and
was not necessary for our applications, we have not implemented this.
  \end{remark}

\section{Numerical evidence for $p$-adic BSD}\label{S:pbsd}

For elliptic curves $E/\Q$ of good ordinary reduction at a prime $p$,
Mazur--Tate--Teitelbaum propose 
\footnote{In fact, Mazur--Tate--Teitelbaum also give conjectures for bad
semistable reduction; a supersingular conjecture is due to Bernardi and
Perrin--Riou~\cite{BPR93}.} 
a $p$-adic conjecture of Birch and
Swinnerton--Dyer type in~\cite{MTT86}. It relates the special value of the 
analytic $p$-adic $L$-function $L_p(E,s)$ of
Mazur and Swinnerton--Dyer~\cite{MSD74} to the $p$-adic regulator
defined via the canonical $p$-adic height of Mazur--Tate from \cite{Mazur-Tate-p-adic-heights}. They also give numerical
evidence for their conjecture; for further evidence see~\cite{SW13}.
Together with Balakrishnan and Stein, the second author has
extended this conjecture
in~\cite{BMS16} to the case of an abelian variety $A/\Q$ with good ordinary
reduction at a prime $p$ such that $A$ is modular
of $\mathrm{GL}_2$-type, associated to $d=\dim(A)$ conjugate newforms
$f_1,\ldots, f_d \in S_2(\Gamma_0(N))$. 

To formulate the conjecture, we recall some notation
from~\cite{BMS16}.
Let $\Reg_p(A/\Q)$ denote the $p$-adic regulator of $A(\Q)$, defined via
the canonical Mazur--Tate height and the cyclotomic idèle class character,
normalized as in~\cite[Definition~3.3]{BMS16}.
Let $L_p(A,s)\colonequals L_p(f_1,s)\cdots L_p(f_d,s)$, where the
Mazur--Swinnerton--Dyer $p$-adic $L$-functions $L_p(f_i,
s)$ are normalized with respect to Shimura periods that
satisfy~\cite[Theorem~2.4]{BMS16}. 
Let $K$ denote the (totally real)
field generated by the Hecke eigenvalues $a_{p,i}$ of the $f_i$ and fix an embedding
$\sigma\colon K\hookrightarrow\C$ and a prime $\p\mid p$ of $K$. Let $\alpha_i$ be the unit root of 
$x^2-\sigma(a_{p,i})x+p \in K_\p[x]$ and define 
the $p$-adic multiplier $\epsilon_p(A)\colonequals \prod_{i=1}^d(1-\alpha^{-1}_i)^2$.  
Finally, let $c_v$ denote the
    Tamagawa number at a finite place $v$ of $\Q$, $\Sha(A/\Q)$ the
    Shafarevich--Tate
group of $A/\Q$ and $A(\Q)_{\tors}$ the torsion subgroup of $A(\Q)$.
Then a slightly rewritten and specialized version
of~\cite[Conjecture~1.4]{BMS16} reads as follows:

\begin{conj}\label{pbsd}
Let $A/\Q$ be a modular abelian variety of  $\mathrm{GL}_2$-type, with
good ordinary reduction at a prime $p$.
Then the Mordell--Weil rank $r$ of $A/\Q$ equals
      $\ord_{s=1}L_p(A,s)$ and we have
      \begin{equation*}\label{eq-pbsd}{L}_p^*(A,1)
        =\frac{\epsilon_p(A)}{\log_p(1+p)^r}
\cdot\frac{ |\Sha(A/\Q)|
      \cdot \Reg_{p}(A/\Q) \cdot \prod_v  c_v
}{|A(\Q)_{\tors}|\cdot|A^{\vee}(\Q)_{\tors}|},\end{equation*} where
  $L_p^*(A,1)$
is the leading coefficient of $L_p(A,s)$ at $s=1$. \end{conj}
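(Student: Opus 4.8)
Conjecture~\ref{pbsd} is not a theorem, so the plan is not to prove it in general but to verify it numerically in a concrete example, namely $A=J_0^+(67)$, the Jacobian of the Atkin--Lehner quotient $X_0^+(67)$, at the prime $p=11$. This $A$ is a modular abelian surface of $\mathrm{GL}_2$-type attached to a Galois-conjugate pair of newforms $f_1,f_2\in S_2(\Gamma_0(67))$, and one first checks the hypotheses: that $A$ has good ordinary reduction at $11$ (equivalently, that $a_{11}$ generates a unit at the chosen prime $\p$ of $K$). A defining equation $y^2=f(x)$ of $X_0^+(67)$ has $f$ of \emph{even} degree, so the algorithm of~\cite{BBHeights} does not apply over $\Q_p$; this is precisely the obstruction removed by Algorithm~\ref{alg:hp}.

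On the \emph{arithmetic} side I would: (i)~determine $A(\Q)$, in particular its rank $r$ and a set of generators modulo torsion (we expect $r=2=g$, so that the regulator is a $2\times 2$ determinant); (ii)~compute the $p$-adic height pairing matrix $\bigl(h(P_i,P_j)\bigr)_{i,j}$ on these generators with respect to the \emph{unit root} subspace $W\subset\hdr(X\otimes\Q_p/\Q_p)$ and the cyclotomic id\`ele class character, using $h=\sum_v h_v$: the local terms $h_v$ for $v\ne p$ by intersection theory on a regular model as in~\cite{Mul14, Raymond-David-Steffen}, and the local term $h_p$ at $p$ by Algorithm~\ref{alg:hp} with $W$ obtained as in Proposition~\ref{prop:unit-root-subspace}, reducing representatives of the generators to the divisors handled in Sections~\ref{sec:Even-two-infinities} and~\ref{sec:affine}; (iii)~form $\Reg_p(A/\Q)$ with the normalization of~\cite[Definition~3.3]{BMS16}; (iv)~compute $\epsilon_p(A)=\prod_{i=1}^2(1-\alpha_i^{-1})^2$, where $\alpha_i$ is the unit root of $x^2-\sigma(a_{11,i})x+11$; and (v)~read off the Tamagawa numbers $c_v$ (only $v=67$ is relevant), the torsion orders $|A(\Q)_{\tors}|$, $|A^\vee(\Q)_{\tors}|$, and the order of $\Sha(A/\Q)$ (the latter at best conditionally, e.g.\ via the classical BSD formula together with a descent bound, or assumed trivial and then cross-checked). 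On the \emph{analytic} side I would compute $L_p(A,s)=L_p(f_1,s)L_p(f_2,s)$ from modular symbols with the Shimura period normalization of~\cite[Theorem~2.4]{BMS16}, determine $\ord_{s=1}L_p(A,s)$ (expected to equal $r$, as there is no exceptional zero in the ordinary case), and extract the leading coefficient $L_p^*(A,1)$ to as many $p$-adic digits as the height computation supports. One then checks that
\[
L_p^*(A,1)=\frac{\epsilon_p(A)}{\log_p(1+p)^{\,r}}\cdot\frac{|\Sha(A/\Q)|\cdot\Reg_p(A/\Q)\cdot\prod_v c_v}{|A(\Q)_{\tors}|\cdot|A^\vee(\Q)_{\tors}|}
\]
holds to the computed precision.

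The hard part is step~(ii): computing $h_p$ on an even-degree genus-$2$ curve with respect to the unit root subspace was not previously feasible, and it is exactly here that Algorithm~\ref{alg:hp} together with its precision analysis (Section~\ref{sec:prec-p-adic-heights}) is indispensable; in particular one must track the precision losses governed by $\ord_p\det(\Frob-pI)$, $\ord_p\det(\Phi-I)$ and $\ord_p\det(M)$ so that the final comparison is significant. Two further points need care: the period normalizations on the two sides must be matched so that the transcendental factors genuinely cancel, and $|\Sha(A/\Q)|$ enters as an unproven input, so strictly speaking the verification tests the \emph{combination} of Conjecture~\ref{pbsd} with the classical BSD prediction for $|\Sha(A/\Q)|$ unless $\Sha$ can be shown trivial independently.
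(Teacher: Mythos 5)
Your proposal correctly recognizes that this is a conjecture (originating from~\cite{BMS16}), not a theorem to be proved, and that the paper's contribution is a numerical verification in cases where the previous algorithm of~\cite{BBHeights} was unavailable. Your outlined verification strategy for $A=J_0^+(67)$ at $p=11$ — compute $L_p^*(A,1)$ via Pollack--Stevens with the Shimura period normalization of~\cite{BMS16}, compute $\Reg_p(A/\Q)$ via the Coleman--Gross pairing (unit root subspace, cyclotomic character) using Algorithm~\ref{alg:hp} for $h_p$ and intersection theory for $h_v$ with $v\ne p$, compute $\epsilon_p(A)$, read off $c_v$ and the torsion orders, and compare — is exactly the approach in Example~\ref{E:67bsd}. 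Two small points the paper handles that you left slightly open: the rank, Tamagawa numbers and torsion for this $A$ are already known from~\cite{FLSSSW01}, and $|\Sha(A/\Q)|=1$ is now established unconditionally by Keller--Stoll~\cite{KS22, KS}, so the comparison is a genuine test of Conjecture~\ref{pbsd} rather than of the combination with the classical BSD prediction for $\Sha$.
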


We now discuss how to gather empirical evidence for Conjecture~\ref{pbsd}. The $p$-adic
$L$-functions $L_p(f_i,s)$ can be approximated using an algorithm due to
Pollack--Stevens (see~\cite{PS11}) based on overconvergent modular
symbols. 
{\tt SageMath} contains an implementation of this algorithm;
currently, it requires that the prime $p$ splits in $K$. 
A forthcoming {\tt Magma}
implementation of this algorithm that does not have this restriction is due
to Keller.
The Tamagawa numbers can be computed using work of van Bommel~\cite{vB22}.
 In~\cite{KS22, KS} Keller and Stoll announce algorithms to compute
 $|\Sha(A/\Q)|$ and the verification that $|\Sha(A/\Q)|=1$ for a
 number of geometrically simple abelian surfaces.
However, if we cannot compute $|\Sha(A/\Q)|$, we can still
try to compute all other quantities in Conjecture~\ref{pbsd}
and test whether the quotient
\begin{equation*}\label{}
  \frac{{L}_p^*(A,1)\cdot \log_p(1+p)^r\cdot |A(\Q)_{\tors}|\cdot|A^{\vee}(\Q)_{\tors}|}
{\epsilon_p(A)
      \cdot \Reg_{p}(A/\Q) \cdot \prod_v  c_v
}
\end{equation*}
is an integer and whether it agrees with what we expect $|\Sha(A/\Q)|$ to
be.

Suppose that $A$ is the Jacobian of a smooth projective geometrically
integral curve $X/\Q$, satisfying the
conditions of Conjecture~\ref{pbsd}.
By \cite[Theorem 3.3.1]{Coleman-bi-extension-p-adic-heights}, the
canonical Mazur--Tate height equals the Coleman--Gross height with respect
to the unit root subspace (both taken with respect to the same idèle
class character).
If $X$ is a hyperelliptic curve given by an odd degree model over $\Q_p$,
then one can compute $\Reg_p(A/\Q)$ using the algorithm of
Balakrishnan--Besser from \cite{BBHeights} (and {\tt Magma}'s {\tt
LocalIntersectionData})
Using this approach, evidence for Conjecture~\ref{pbsd} is discussed
in~\cite[Section~4]{BMS16} for the Jacobians of~16 genus-2 curves having
rank~2 and for all good ordinary primes $p<100$ such that the curve has an odd degree model
over $\Q_p$.

Using our Algorithm~\ref{alg:hp}, we can gather empirical evidence for 
Conjecture~\ref{pbsd} for hyperelliptic curves $X/\Q$ with modular
Jacobian of GL$_2$-type, and good ordinary primes $p$ such that $X$ has no odd degree model
over $\Q_p$. 

\begin{example}\label{E:67bsd}
 Consider the Atkin--Lehner quotient $X\colonequals X_0^+(67)$,
given by the equation 
  \begin{equation*}\label{67eqn}
X\colon y^2 = x^6+4x^5+2x^4+2x^3+x^2-2x+1\,,
  \end{equation*}
and let $A$ be its Jacobian.
  It was shown in~\cite{FLSSSW01} that
  \begin{equation}\label{tametc}
    \mathrm{rk}(A/\Q)=2,\quad c_v=1\text{ for all }v,\quad
    |A(\Q)|_{\mathrm{tors}}=1
  \end{equation}
 and that the full (classical) Birch and Swinnerton--Dyer conjecture
holds numerically for $A/\Q$  if and only if $|\Sha(A/\Q)|=1$. The latter was recently 
verified by Keller and Stoll~\cite{KS22, KS}.
In~\cite{BMS16}, Conjecture~\ref{pbsd} was verified numerically for the
primes $p=7,13,17,19,37,41,43,47,59,61,73,79,83$. These are all good
ordinary primes $p<100$ such that $X$ has a quintic model over $\Q_p$.

  We now discuss the verification of Conjecture~\ref{pbsd} for $p=11$.
  The Fourier coefficients of the newforms corresponding to $A$ lie in
  $K=\Q(\sqrt{5})$. 
  Since $p=11$ splits over $K$, we can use the {\tt SageMath} implementation of
  the overconvergent algorithm of Pollack and Stevens to compute
  $L_{11}^*(A,1)$. According to~\cite[Table 4.4]{BMS16}, this involves a 
  normalization factor $\delta^+$, which is independent of $p$. In the present example, we have $\delta^+=1/4$ by~\cite[Table~4.4]{BMS16}. We find that
  \begin{equation}\label{l11}
    L_{11}^*(A,1) = 10 + 11 + 3\cdot 11^2 + 10\cdot 11^3 + O(11^4)\,.
  \end{equation}
  The $11$-adic multiplier is
  \begin{equation}\label{eps11}
  \epsilon_{11}(A)=9 + 8\cdot 11 + 11^2 + 5\cdot 11^3 + O(11^4)\,.
  \end{equation}
  It remains to compute $\Reg_{11}(A/\Q)$. The Mordell--Weil group $A(\Q)$
  is generated by $P_1 = [(0,1) - \infty_-]$ and $P_2 = [(0,1)-(0,-1)]$.
  Since $ x^6+4x^5+2x^4+2x^3+x^2-2x+1$ has no root in
  $\F_{11}$, we cannot use the algorithm from~\cite{BBHeights}.
  Instead, we apply Algorithm~\ref{alg:hp} to compute the $11$-adic heights
  $h(P_1,P_1), h(P_1,P_2)$ and $h(P_2,P_2)$; we have 
$$
  \Reg_{11}(A/\Q) = h(P_1,P_1)h(P_2,P_2)-h(P_1,P_2)^2\,.
$$
  To do so, we find suitable multiples of $P_1$ and $P_2$ 
that have representatives
  satisfying the
  conditions of Algorithm~\ref{alg:hp} and of the algorithm
  in~\cite{Mul14}.
  For instance, $3P_1$ and $10P_1$ have representatives $D_{1}$ and $D'_1$,
  respectively, such that 
  \begin{itemize}
    \item $D_1$ and $D'_1$ have disjoint support;
    \item $D_1\otimes \Q_{11}$ is of the form $Q_1+Q_2-R_1-R_2$, where
      $Q_1,Q_2\in X(\Q_{11})$ are in affine discs and $R_{1}, R_2$ are the
      points with $x$-coordinate~1; 
    \item $D'_1\otimes \Q_{11}$ is of the form $Q'_1+Q'_2-R'_1-R'_2$, where
      $Q'_1,Q'_2\in X(\Q_{11})$ are in affine discs and $R'_1, R'_2$ are the
      points with $x$-coordinate~2; 
    \item the tuples $(Q_i,R_i, Q'_j, R'_j)$ satisfy Condition~1
      for all $i,j\in \{1,2\}$.
  \end{itemize}
  Hence we may compute $h_{11}(D_1,D'_1)$ using Algorithm~\ref{alg:hp} and
  $h_{q}(D_1,D'_1)$ for $q\ne p$ using {\tt Magma}'s {\tt 
LocalIntersectionData}, and we find
$$
h(P_1, P_1) = \frac{1}{30}(h_{11}(D_1,D'_1) -\log_{11}(2)
-2\log_{11}(3)+2\log_{11}(5)+\log_{11}(17)-\log_{11}(317))\,.
$$
We compute $h(P_1,P_2)$ and $h(P_2,P_2)$ using the same strategy and obtain
$$
\Reg_{11}(A/\Q)=6\cdot 11^2 + 3\cdot 11^3 + 7\cdot 11^4 + O(11^5)\,.
$$
Together with~\eqref{tametc},~\eqref{l11} and~\eqref{eps11}, this suffices
to show that Conjecture~\ref{pbsd} holds to 4 digits of precision. 
In fact, we verified the conjecture to 10 digits of precision, and we
did the same for the primes $p=29,31,53,71,89$.
The code for these computations can be found
at~{\url{https://github.com/StevanGajovic/heights_above_p}}.

\end{example}

\bibliographystyle{alpha}
\bibliography{References}

\end{document}